\documentclass{amsart}

\usepackage[a-1b]{pdfx}   
\makeatletter \AtBeginDocument{\let\mathaccentV\AMS@mathaccentV} \makeatother

\usepackage{geometry}
\usepackage[parfill]{parskip} 

\usepackage[dvipsnames]{xcolor}
\usepackage{enumerate}
\usepackage{fancyvrb}

\usepackage{amssymb}
\usepackage{amsmath}
\usepackage{amsthm}

\numberwithin{equation}{section}
\usepackage[nameinlink]{cleveref}
\usepackage{cite}

\usepackage[acronym]{glossaries}
\glsdisablehyper
\newacronym{mhd}{MHD}{Magneto-Hydrodynamic}
\newacronym{ode}{ODE}{ordinary differential equation}

\usepackage{graphicx}
\usepackage{caption}
\usepackage{tikz}
\usetikzlibrary{patterns.meta}

\usepackage{keytheorems}
\newkeytheorem{theorem}[parent=section, style=plain]
\newkeytheorem{lemma, claim, corollary, proposition, conjecture, definition}[sibling=theorem, style=plain]

\renewkeytheoremstyle{remark}{
    inherit-style=remark,
    qed={$\triangle$},
}
\newkeytheorem{remark, example}[style=remark,sibling=theorem]

\usepackage{mathtools}
\DeclarePairedDelimiter\p{(}{)}

\DeclarePairedDelimiter\abs{\lvert}{\rvert}
\DeclarePairedDelimiter\norm{\lVert}{\rVert}
\DeclarePairedDelimiter\set{\{}{\}}

\newcommand{\bb}[1]{\mathbb{#1}}
\def\R{\bb{R}}

\newcommand{\cc}[1]{\mathcal{#1}}

\usepackage{bbm}
\newcommand{\bbm}[1]{\mathbbm{#1}}

\def\d{\,\mathrm{d}}
\DeclareMathOperator{\proj}{proj}

\DeclareMathOperator{\tdiv}{div}
\DeclareMathOperator{\tcurl}{curl}

\DeclareMathOperator{\supp}{supp}

\newcommand{\radial}{\widehat{x}}
\newcommand{\incore}{{\Omega_i}}
\newcommand{\outcore}{{\Omega_o}}
\newcommand{\core}{{\Omega_c}}
\newcommand{\exterior}{{\Omega_e}}
\usepackage{stmaryrd}
\DeclarePairedDelimiter{\jump}{\llbracket}{\rrbracket}

\begin{document}

\title{Existence of solutions for a model of the Earth's magnetic field}
\author[J. Bedrossian]{Jacob Bedrossian}
\address{Department of Mathematics, University of California, Los Angeles}
\email[]{jacob@math.ucla.edu}
\author[T. Schang]{Tom Schang}
\address{Department of Mathematics, University of California, Berkeley}
\email[]{tom\_schang@berkeley.edu}
\author[F. Weber]{Franziska Weber}
\address{Department of Mathematics, University of California, Berkeley}
\email[]{fweber@berkeley.edu}
\thanks{F.W. and T.S. were supported in part by NSF DMS-2438083, J.B. was supported in part by NSF DMS-2108633 and NSF DMS-2510949}

\subjclass{76W05,86A25,35D30,35R05.}
\keywords{geodynamo, weak solutions, insulating exterior, transmission conditions, magnetohydrodynamics}

\begin{abstract}
    We study a  physically realistic, whole-core mathematical model of the dynamics in the Earth's core and we prove existence of Leray-Hopf type weak solutions to the model. Our model combines Magneto-Hydrodynamic equations in the liquid outer core with solid physics for the electrically conducting inner core, and treats everything exterior to the core as a perfect insulator governed by Maxwell's equations. We prove existence of weak solutions using Galerkin approximations. In order to control the nonlinearities, we must define an appropriate function space for the magnetic field and prove a Biot-Savart type result. The main new difficulty here is properly setting up the functional framework to simultaneously deal with the fluid structure interaction with the inner core and the magnetic transmission problem, with both the perfectly conducting inner core and the perfectly insulating mantle/exterior. 
\end{abstract}
\maketitle

\setcounter{tocdepth}{1}
\tableofcontents

\section{Introduction}

In 1919, Larmor proposed a model, which has developed into the geodynamo model, to explain how the Earth generates its magnetic field \cite{larmor_how_1919}. According to the geodynamo model, heat is slowly released from Earth's solid inner core to the liquid outer core. This heating creates a convection effect which, coupled with the Coriolis force from rotation of the Earth, produces helical currents in the liquid outer core. Because the outer core is composed of electrically conducting molten metals, the helical currents act as solenoids and create the magnetic field that we experience. While the dominant theory is that the convection due to buoyant forces related to temperature and composition of the fluid is the primary driving force, researchers are also exploring other mechanisms, such as tidal forces and precession \cite{landeau_sustaining_2022}.

More recently, researchers have simulated the geodynamo model numerically \cite{glatzmaier_simulating_1997,roberts_geodynamo_2000,Glatzmaiers1995}. Their experiments yielded results that support the geodynamo model -- the total magnetic energy is experimentally observed to sustain over time. Their experiments are also consistent with the observed phenomenon of \emph{geomagnetic reversals}, which is that the Earth's magnetic field flips direction at random times on the scale of hundreds of thousands of years~\cite{Merrill1983,Merrill1997,Clement2004}. Some recent simulations have focused on the effect of turbulence in the geodynamo~\cite{schaeffer_turbulent_2017}.

However, to our knowledge, there is no rigorous mathematical proof that the systems being simulated are mathematically well-defined in that they admit solutions. In particular, we do not know whether the numerical schemes from previous simulations converge to a solution of the system of equations they purport to solve. In this paper, we address this research gap by rigorously defining a system of equations that models the earth's magnetic field and the dynamics of the molten metals in the core, and proving existence of Leray-Hopf-type weak solutions to this system of equations for all times. We focus on weak solutions as we are interested in the long-time dynamics in non-perturbative regimes where it will not be possible to prove the existence of strong solutions beyond a short time.

\subsection{Model Description}
\begin{figure}
    \centering
    \begin{tikzpicture}[scale=1.3,>=stealth]
      \def\rinner{1.2}
      \def\router{2.4}

      \fill[pattern={Lines[angle=45,distance=8pt]}] (0,0) circle (\router);   
      \fill[white] (0,0) circle (\rinner);  
      \fill[pattern={Dots[distance=6pt]}] (0,0) circle (\rinner);   
    
      \draw[line width=1.2pt] (0,0) circle (\router); 
      \draw[line width=1.2pt] (0,0) circle (\rinner); 
    
      \node[fill=white] at (\rinner/2,0) {$\Omega_i$};                             
      \node[fill=white] at ({(\rinner+\router)/2},0) {$\Omega_o$};                 
      \node at ({\router + (\router-\rinner)/2},0) {$\Omega_e$};                    
    
      \draw[->,bend left=15,line width=1pt] 
        (-3,0.0) node[left] {$\partial \Omega_i$} to (180:\rinner);
      \draw[->,bend left=15,line width=1pt] 
        (-3,{\router*sin(160)}) node[left] {$\partial \Omega_c$} to (160:\router);
    
      \draw[-,thick] (0,0) -- ({\rinner*cos(115)},{\rinner*sin(115)}) node[midway,right,fill=white] {$R_i$};
      \draw[-,thick] (0,0) -- ({\rinner*cos(115)},{\rinner*sin(115)});
      \draw[-,thick] (0,0) -- ({\router*cos(125)},{\router*sin(125)}) node[pos=0.65,left,fill=white] {$R_o$};
      \draw[-,thick] (0,0) -- ({\router*cos(125)},{\router*sin(125)});
    
      \node[fill=white] at (0,-0.3*\rinner) {$\displaystyle \operatorname{div} B = 0$};
      \node[fill=white] at (0,-0.55*\rinner) {$\displaystyle u = \omega \times x$};
    
      \node[fill=white] at (0,-0.65*\router) {$\displaystyle \operatorname{div} B = 0$};
      \node[fill=white] at (0,-0.8*\router) {$\displaystyle \operatorname{div} u = 0$};
    
      \node at ({\router + (\router-\rinner)/2},-0.6*\router) {$\displaystyle \operatorname{div} B = 0$};
      \node at ({\router + (\router-\rinner)/2},-0.75*\router) {$\displaystyle \operatorname{curl} B = 0$};
      \node at ({\router + (\router-\rinner)/2},-0.9*\router) {$\displaystyle u = 0$};
    \end{tikzpicture}
    \caption{The domain over which the geodynamo problem is posed. The inner ball $\incore$ is the inner core, the annulus $\outcore$ is the outer core, and everything outside of that is the exterior $\exterior$.}
    \label{fig:setup}
\end{figure}

Our proposed model is largely derived from the model used in \cite{glatzmaier_simulating_1997}. To begin with, there are three domains of interest. The \emph{inner core} of the earth is given by $\incore := \{|x|< R_i\}$. The \emph{outer core} is given by $\outcore:=\{ R_i<|x|<R_o\}$. The \emph{core} is $\core :=\{|x|<R_o\}$. 
We also define everything \emph{exterior} to the core as $\exterior = \{|x|>R_o\}$. A graphical depiction can be found in \Cref{fig:setup}.

Physically, the liquid outer core is an electrically conducting liquid with a buoyant force proportional to the temperature and composition of the fluid. We model it using the \gls{mhd} equations in a rotating frame. In particular, we intend to solve for the fluid velocity $u$, the magnetic field $B$, the buoyancy variable (combining effects of composition and temperature on buoyancy) $\theta$, and the fluid pressure $p$. In the outer core $\outcore$, they must satisfy the following equations
\begin{subequations}\label{eqs:outer-core}
    \begin{equation}
        \label{eq:momentum-outer-core}
        \partial_t u +(u\cdot \nabla) u +\nabla p = \nu\Delta u + \rho^{-1}\mu^{-1}\tcurl B\times B + \theta g \radial - Le_3\times u
    \end{equation}
    \begin{equation}
        \label{eq:magnetic-outer-core}
        \partial_t B -\tcurl(u\times B) =-\tcurl\p*{\sigma^{-1} \tcurl \mu^{-1} B}
    \end{equation}
    \begin{equation}
        \label{eq:temp-outer-core}
        \partial_t\theta + (u\cdot \nabla)\theta =\kappa\Delta \theta + f
    \end{equation}
    \begin{equation}
        \label{eq:incompressible}
        \tdiv u = 0 
    \end{equation}
    \begin{equation}
        \label{eq:no-monopoles-outer-core}
        \tdiv B = 0 
    \end{equation}
\end{subequations}
where $\nu$ is the fluid viscosity, $\rho$ is the fluid density, $g$ is the acceleration due to gravity, $\radial$ is the outwards pointing radial unit vector (i.e., $\radial = x/|x|$), $Le_3$ is the angular velocity of the rotating frame (which represents the angular velocity of the Earth), $\mu$ is the magnetic permeability, $\sigma$ is the electric conductivity, and $f$ is taken to be a prescribed external source term on the buoyancy variable, capturing effects such as radioactive decay. In a future work, we will consider a stochastic forcing $f$ to model small-scale turbulence. \Cref{eq:momentum-outer-core} is the conservation of momentum equation for the fluid velocity we expect from Navier-Stokes, including additional terms for the Lorentz force, the buoyant force, and the Coriolis force. \Cref{eq:magnetic-outer-core} is the induction equation from the \gls{mhd} equations. \Cref{eq:temp-outer-core} is an advection-diffusion equation describing the evolution of the buoyancy variable.

The inner core is modeled as a rigid electrically conducting ball, so that on $\incore$ we solve for the relative angular velocity $\omega$ (which determines the velocity field $u$), and the magnetic field $B$ that satisfy the following equations
\begin{subequations}\label{eqs:inner-core}
    \begin{equation}\label{eq:angular-inner-core}
        J\frac{\d }{\d t}\omega = \int_{\partial \incore}  R_i n\times (\rho\Sigma\cdot n) \d S-JLe_3\times \omega
    \end{equation}
    \begin{equation}\label{eq:momentum-inner-core}
        u:=\omega\times x
    \end{equation}
    \begin{equation}\label{eq:magnetic-inner-core}
        \partial_t B -\tcurl \p*{u \times B} = -\tcurl (\sigma^{-1}\tcurl\mu^{-1} B)
    \end{equation}
    \begin{equation}\label{eq:divergence-inner-core}
        \tdiv B = 0
    \end{equation}
\end{subequations}
where $J$ is the angular momentum of the inner core, and 
\[\Sigma = -pI+2\nu D^S(u)+\mu_o^{-1}\rho^{-1}|B|^2 I + \mu_o^{-1}\rho^{-1}B\otimes B\]
is the fluid stress tensor. We define $D^S(u) = \frac{1}{2}(\nabla u +\nabla u^T)$ to be the symmetrized gradient.
\Cref{eq:angular-inner-core} is the conservation of angular momentum, where the surface integral is the torque applied by the liquid outer core on the surface of the inner core, and the second term is the Coriolis effect. \Cref{eq:magnetic-inner-core} is again the induction equation from the \gls{mhd} system.

Finally, we must extend the magnetic field outside of the core. We treat the region exterior to the core as a perfect insulator (i.e., $\sigma=0$), so that on $\exterior$ we solve for the magnetic field $B$ and electric field $E$ satisfying 
\begin{subequations}\label{eqs:exterior}
    \begin{equation}
        \partial_t B = -\tcurl E
    \end{equation}
    \begin{equation}
        \tdiv B = 0
    \end{equation}
    \begin{equation}
        \tcurl B =  0
    \end{equation}
    \begin{equation}
        \tdiv E = 0.
    \end{equation}
\end{subequations}
This is in contrast to the inner and outer core where the electric field is eliminated using Ohm's law. The need to explicitly include the electric field can also be motivated by formally considering \Cref{eq:magnetic-outer-core} with $u=0$, $\sigma=0$, and $\tcurl (\mu^{-1} B)=0$.
The electric field is the correct way to interpret the indeterminate term that appears inside the curl. This effectively produces a non-local boundary condition for the magnetic field on the core-mantle boundary, and is the source of most of the new complexity in this work.

To make this model realistic, we allow for piecewise constant physical constants, 
\[
\mu = \begin{cases}
    \mu_i   & \text{in }\incore \\
    \mu_o & \text{in }\outcore \\
    \mu_e & \text{in }\outcore
\end{cases}, \quad 
\sigma = \begin{cases}
    \sigma_i &\text{in }\incore \\
    \sigma_o&\text{in }\outcore \\
    0 & \text{in }\exterior
\end{cases}.
\]
The model also must include how the quantities at each interface are coupled. At the interface between the inner and outer core $\partial\incore$,
\begin{subequations}\label{eqs:BC-inner}
    \begin{equation}\label{eq:inner-BC-no-slip}
        u = \omega\times R_i n
    \end{equation}
    \begin{equation}\label{eq:inner-BC-heat-flux}
        \nabla \theta(t, x)\cdot n = f_b(x)
    \end{equation}
    \begin{equation}\label{eq:inner-BC-magnetic-normal}
        \llbracket B\cdot n\rrbracket =0
    \end{equation}
    \begin{equation}\label{eq:inner-BC-magnetic-tangential}
        \jump{\proj_{n\perp}\mu^{-1}B}=0
    \end{equation}
    \begin{equation}\label{eq:inner-BC-neumann}
        \llbracket\sigma^{-1}\tcurl (\mu^{-1}B)\times n\rrbracket = 0
    \end{equation}
\end{subequations}
where $n$ is the normal vector pointing outwards from $\incore$, $\proj_{n\perp}$ is tangential projection (i.e., $\proj_{n\perp } y = y- (y\cdot n)n$). We use $\jump{\cdot }_{S}$ to indicate the jump of a quantity across a surface $S$ in the direction of the implied normal $n$. We omit the surface when it is clear from context.
\Cref{eq:inner-BC-no-slip} is a no-slip boundary condition between the fluid velocity and the boundary of the solid inner core. \Cref{eq:inner-BC-heat-flux} is a is a flux boundary condition with prescribed right-hand-side $f_b$, modeling the release of heat and lighter elements from the inner core to the outer core. \Cref{eq:inner-BC-magnetic-normal,eq:inner-BC-magnetic-tangential} are transmission conditions coming from refraction in Maxwell's equations, in particular they are the conditions $\jump{B\cdot n}=\jump{H\times n}=0$ (where $B = \mu H$). The additional boundary condition \cref{eq:inner-BC-neumann} represents the jump condition $\jump{E\times n}=0$. 

At the interface between the outer core and the exterior $\partial \core$ (physically, the interface with the mantle), we impose
\begin{subequations}\label{eqs:BC-outer}
    \begin{equation}
        u = 0 
    \end{equation}
    \begin{equation}\label{eq:outer-BC-temperature}
        \theta(t,x) = \theta_b(x) 
    \end{equation}
    \begin{equation}
        \jump{B\cdot n} = 0
    \end{equation}
    \begin{equation}\label{eq:outer-BC-magnetic-tangential}
        \jump{\proj_{n\perp}\mu^{-1}B} = 0
    \end{equation}
    \begin{equation}\label{eq:outer-BC-neumann}
        E\times n = \sigma^{-1}\tcurl \mu^{-1}B\times n
    \end{equation}
\end{subequations}
where $n$ is the normal vector pointing outwards from $\core$. Similarly to the equations on $\partial\incore$, we have a no-slip condition on $u$, a Dirichlet condition for the bouyancy variable given by $\theta_b$, as well as refraction transmission conditions on the magnetic and electric fields.

\subsection{Main Result}
Our main result is the following:
\begin{theorem}\label{thm:existence-intro}
    Given initial data $B_0,(u_0,\omega_0),\theta_0$ in an appropriate space, and given $f,f_b,\theta_b$ also in an appropriate space, then for any time $T$, there exists a weak solution $(B,(u,\omega),\theta)$ defined on $[0,T]$ to \cref{eqs:outer-core,eqs:inner-core,eqs:exterior,eqs:BC-inner,eqs:BC-outer}.
\end{theorem}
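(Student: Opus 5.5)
The approach is a Galerkin scheme built on a functional framework that encodes all interface conditions weakly, so that the nonlocal exterior problem never appears explicitly.

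\textbf{Magnetic space.} Since $\tcurl B=\tdiv B=0$ on $\exterior$ and $B$ should decay at infinity, $B|_\exterior=\nabla\phi$ with $\phi$ harmonic and decaying. Its tangential trace on $\partial\core$ is fixed by \cref{eq:outer-BC-magnetic-tangential}, so the exterior field is a nonlocal (Dirichlet-to-Neumann type) function of the core field. I would define
\[
\mathcal H=\set{H\in L^2(\R^3):\ \tdiv(\mu H)=0,\ \tcurl H\in L^2(\core),\ \tcurl H=0 \text{ in }\exterior},
\]
with $H=\mu^{-1}B$. In this space $\tcurl H\in L^2(\R^3)$ in the distributional sense, which encodes \cref{eq:inner-BC-magnetic-normal,eq:inner-BC-magnetic-tangential} and their outer analogues. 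The norm is $\norm{\mu^{1/2}H}_{L^2(\R^3)}+\norm{\tcurl H}_{L^2(\core)}$.

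The weak induction equation is obtained by testing with $\varphi\in\mathcal H$. Integrating by parts on $\incore$ and $\outcore$, and using \cref{eq:inner-BC-neumann,eq:outer-BC-neumann} together with $\partial_tB=-\tcurl E$ on $\exterior$, all boundary terms cancel and one gets
\[
\frac{\d}{\d t}\int_{\R^3} B\cdot\varphi+\int_\core \sigma^{-1}\tcurl H\cdot\tcurl\varphi=\int_\core (u\times B)\cdot\tcurl\varphi .
\]
This cancellation is the key point: testing against curl-free exterior fields eliminates $E$.

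\textbf{Biot--Savart estimate.} To control the nonlinear terms I need $\norm{H}_{H^1(\core\setminus\text{interfaces})}\lesssim \norm{\mu^{1/2}H}_{L^2}+\norm{\tcurl H}_{L^2(\core)}$. I would prove it piecewise. In $\exterior$ the field is a decaying harmonic gradient, so its energy controls its trace. On $\core$ I would apply div-curl estimates with transmission conditions on each subdomain, noting that the annulus $\outcore$ has trivial first cohomology, so no harmonic fields are lost. Together with Sobolev embedding this gives $B\in L^2_tL^6_x(\core)$, which is what makes the Lorentz and induction terms meaningful.

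\textbf{Velocity space and energy.} The velocity space $\mathcal V$ consists of divergence-free $u\in H^1_0(\core)$ with $u=\omega\times x$ on $\incore$ for some $\omega\in\R^3$, carrying the kinetic energy $\rho\int_\outcore|u|^2+J|\omega|^2$. The weak momentum equation, tested with $(v,\eta)\in\mathcal V$, produces the torque integral in \cref{eq:angular-inner-core} automatically by integration by parts.

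For $\theta$ I would lift the boundary data by an extension $\bar\theta$ of $\theta_b$ with the flux $f_b$ and work with $\theta-\bar\theta\in H^1$ vanishing on $\partial\core$.

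The energy identity follows by testing with $(u,\omega)$, $H$ and $\theta$. The Lorentz term $\int (\tcurl H\times B)\cdot u$ cancels against $\int (u\times B)\cdot\tcurl H$, the Coriolis terms vanish, and the buoyancy term is bounded by Gronwall through the $\theta$ estimate, which is itself closed by the maximum principle or a direct $L^2$ estimate.

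\textbf{Galerkin scheme and compactness.} I would take eigenbases of the Stokes-type operator on $\mathcal V$, of the magnetic operator $\tcurl\sigma^{-1}\tcurl$ on $\mathcal H$ (a compact resolvent on the quotient by the exterior part needs checking), and of the Neumann/Dirichlet Laplacian for $\theta$. This gives local ODE solutions, which extend to $[0,T]$ by the uniform energy bounds.

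Passing to the limit requires Aubin--Lions applied to bounds in $L^\infty L^2\cap L^2H^1$ and on time derivatives in dual spaces. The main obstacle is strong compactness of $B$: $\mathcal H$ embeds compactly only locally, the exterior part is unbounded, and $\partial_t B$ lives in the dual of $\mathcal H$. I would first obtain strong $L^2_{t,x}(\core)$ convergence of $B$ and only weak convergence in $\exterior$. This suffices because all nonlinear terms are supported in $\core$, whose compact embedding comes from the Biot--Savart estimate above. With that, one passes to the limit in each term of the weak formulation, and the energy inequality follows by weak lower semicontinuity.
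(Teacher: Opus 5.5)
Your architecture is the paper's. $\mathcal H$ is $\bb B^1$ written in the variable $H=\mu^{-1}B$, your weak induction equation is \eqref{eq:magnetic-test}, $\mathcal V$ is $\bb U^1$, and the paper also obtains strong convergence of $B_m$ only in $L^2_tL^2(\core)$. For $\mathcal H$, the distributional curl must be taken on all of $\R^3$. That is what encodes tangential continuity across $\partial\core$ and cancels the $E$ boundary term, and it is not implied by $\tcurl H\in L^2(\core)$ and $\tcurl H=0$ in $\exterior$.

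The gap is in the step the paper identifies as its crux: the Biot--Savart estimate with refraction (\Cref{lem:curl-norm} and \Cref{lem:biot-savart-with-refraction}). The mechanism you describe cannot close. Finite exterior energy means $B|_\exterior=\mu_e\nabla\phi$ with $\phi$ harmonic and $\nabla\phi\in L^2(\exterior)$. This only gives $\phi|_{\partial\core}\in H^{1/2}$, so the normal and tangential traces of the field lie in $H^{-1/2}(\partial\core)$. An $H^1(\outcore)$ div--curl estimate, by contrast, needs normal or tangential data in $H^{1/2}(\partial\core)$, so you are a full derivative short. Moreover, the exterior field is slaved to the core trace through the nonlocal Dirichlet-to-Neumann map, so the regions cannot be treated one at a time. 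You need a regularity statement across each interface that handles both sides simultaneously.

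The paper obtains this by a global construction:
\begin{enumerate}
\item With $h=\tcurl\mu^{-1}B$ extended by zero, $f_1=-\tcurl\cc L_0h\in H^1(\R^3)$ has the right curl.
\item The normal jumps are corrected by $\nabla\phi$, where $\tdiv(\mu\nabla\phi)=0$ in each region and $\jump{\mu\nabla\phi\cdot n}=-\jump{\mu f_1\cdot n}\in H^{1/2}$.
\item Piecewise $H^2$ regularity of $\phi$ comes from Babu\v{s}ka's refraction theorem (\Cref{prop:refraction}), applied after localizing near $\partial\incore$, near $\partial\core$ and near infinity.
\item Finally $\mu(f_1+\nabla\phi)=B$ by uniqueness: an $L^2$ field with $\tdiv B=0$ and $\tcurl\mu^{-1}B=0$ in $\R^3$ is $\mu\nabla\psi$ with $\tdiv(\mu\nabla\psi)=0$, hence zero.
\end{enumerate}
Your cohomology remark is also off. $\outcore$ has second Betti number one, and $x/|x|^3$ is a harmonic field on $\outcore$ with vanishing tangential trace. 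This is harmless only because you keep $\norm{\mu^{1/2}H}_{L^2}$ on the right. Globally such fields are excluded by $\tdiv B=0$ in $\incore$, not by the topology of the annulus.

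Granting that estimate, the rest of your plan works, with a few corrections.

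\emph{Compactness.} The obstacle you name is not real. $B|_\exterior$ is the decaying harmonic gradient with normal trace $B_o\cdot n\in H^{1/2}(\partial\core)$, and this extension is bounded from $H^{-1/2}(\partial\core)$ into $L^2(\exterior)$. Hence $\mathcal H\hookrightarrow L^2(\R^3)$ is compact, the magnetic eigenbasis exists without any quotient, and Aubin--Lions can be run globally. The paper takes a different route: it uses arbitrary orthogonal bases and Temam's fractional-in-time compactness (\Cref{thm:fractional-ALS}), bounding $D_t^\gamma B_m$ in $L^2_tL^2(\core)$ directly from the Fourier-transformed Galerkin equations, so no uniform bound on $P_m$ is needed. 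Your eigenbases buy the simpler dual-space bound on $\partial_t$ instead.

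\emph{Energy identity.} The cancellation is not purely pointwise. In the model the Lorentz force acts only in $\outcore$ and reaches the inner core through the Maxwell-stress torque. Matching it against $\int_\core(u\times B)\cdot\tcurl H$ therefore needs the rigid-rotation identity $\langle(B\cdot\nabla)(\omega\times x),B\rangle_\incore=0$, as in the paper.

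\emph{Buoyancy.} No maximum principle is available at the Galerkin level, nor with $f_b\in H^{-1/2}$; the direct $L^2$ estimate is what works.
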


\subsection{Related Work}
Various aspects of the problem at hand have already been studied. In the realm of fluid-structure interaction, weak solutions have been shown to exist for a rigid body immersed in a viscous incompressible fluid, up to the time of first contact between the solid and the fluid boundary \cite{conca_existence_2000}. We use a similar notion of weak solutions as in that work. The motion of rigid bodies submerged in incompressible fluids has also been studied by: Ortega, Rosier, and Takahashi~\cite{Ortega2005}, where global in time existence of classical solutions was shown for a ball emerged in a perfect incompressible fluid in 2D; by Ducomet and Ne\v{c}asov\'{a}~\cite{Ducomet2013}, where the motion of rigid bodies in  incompressible and compressible viscous
fluids under the action of gravitational forces was studied; and by B\u{a}lilescu, San Mart\'{i}n and Takahashi~\cite{Balilescu2017}, where the existence of weak solutions for a model of ``a rigid body and a viscous incompressible fluid using a boundary condition based on Coulomb's law'' was shown. The resistive \gls{mhd} equations have also been well-studied, such as  in the works of: Duvaut and Lions~\cite{Duvaut1972} where the existence and uniqueness of weak and strong solutions was studied; Sermange and Temam~\cite{Sermange1983} where the long time behavior of solutions was studied; and Cheskidov and Dai~\cite{Cheskidov2025} where regularity criteria for solutions are studied, to mention a few.

A similar system to ours is analyzed in \cite{benesova_fluid-rigid_2023}. That work considers a rigid insulating body immersed in a viscous, incompressible, electrically conducting fluid, with perfect conductor boundary conditions on the boundary between the fluid and the exterior. For this system, the authors prove existence of weak solutions up to the time of first contact between the solid and the fluid boundary. This result has also been extended to the case of multiple rigid bodies immersed in a compressible fluid \cite{scherz_fluidrigid_2023,scherz_schwache_2024}. A rigorous derivation of these systems and boundary conditions can be found in \cite{scherz_modeling_2024}. In contrast, we consider a setting in which the rigid body is conductive and the region external to the fluid is a perfect insulator which leads to a different problem for the unbounded region. We also consider a rotating but not translating rigid body since the translation is not relevant for the dynamics in the core of the earth. We do, however, allow for discontinuous, piecewise constant physical parameters instead of strictly constant ones as in~\cite{benesova_fluid-rigid_2023} due to the inner core, outer core, and the mantle having different physical properties. In addition, the proofs in these previous works appear more complicated and involve both Rothe's method and Brinkman penalization, whereas we achieve our results using only a spatial discretization.

The problem studied here falls into the wider area of electromagnetic transmission problems. A variety of magnetic transmission problems relevant for different physical scenarios have been studied in the literature. A review can be found in~\cite{meir_variational_1996}, where the authors distinguish three basic types found in~\cite{ladyzhenskaya_solution_1960,Ladyshenskaya1959}: (a) the \emph{perfect conductor} problem, where a fluid is confined to a finite vessel with perfectly conducting walls, resulting in an initial-boundary value problem, and (b) the \emph{matching} problem, where a bounded region of electrically conducting fluid is coupled to another region via jump conditions. A typical problem  of this type is a situation with a ``bounded region of conducting fluid surrounded by an insulator of infinite extent.'' There are also (c) \emph{mixed} problems, where two regions are coupled via jump conditions and are contained themselves in a larger region with perfectly conducting walls. For example, a ``conducting fluid and a current carrying solid conductor embedded in a region of insulating material bounded by perfectly conducting walls'' would fall into this category. The problem we are considering in this work has aspects of types (b) and (c), as it concerns two conducting regions coupled via jump conditions and surrounded by an infinite insulator. Problems of type (b) and (c) were first studied in~\cite{ladyzhenskaya_solution_1960,Ladyshenskaya1973,Solonnikov1960}.
More recent progress has been made both on the theoretical (e.g.,~\cite{melenk_regularity_2025}), as well as the numerical side (e.g.,~\cite{meir_analysis_1999,iskakov_magnetic_2005}). In~\cite{melenk_regularity_2025}, the authors investigate higher regularity for vector fields with piecewise regular curl and/or divergence, which are precisely the sort of vector fields encountered in matching problems. In~\cite{iskakov_magnetic_2005}, the authors define a boundary element method for treating matching problems numerically without having to extend the magnetic field to an insulator of infinite extent. 

In our work here, we combine for the first time aspects of magnetic transmission problems of type (b) and (c) such as the matching boundary conditions and the infinite insulating exterior, with rigid body dynamics and the resistive \gls{mhd} equations. To do so, we first define a suitable notion of weak solutions inspired by~\cite{conca_existence_2000} that avoids the boundary integral of the stress tensor in~\eqref{eq:angular-inner-core} which may otherwise not be regular enough for weak solutions. Then we use a Galerkin approach to set up finite dimensional approximate systems for~\eqref{eqs:outer-core}-\eqref{eqs:BC-outer} that satisfy an energy balance yielding a priori estimates. The main challenge here is to define a suitable Hilbert space for the approximation of the magnetic field $B$ leading to the correct transmission conditions. This requires an appropriate Biot-Savart law with refraction. Then we pass to the limit in the approximating system using the Aubin-Lions lemma and conclude that the limit is a weak solution of~\eqref{eqs:outer-core}-\eqref{eqs:BC-outer}.

\subsection{Outline}
The rest of this article is structured as follows: In Section~\ref{sec:functionspace} we construct the necessary function spaces, in particular for the magnetic field variable. We prove a Biot-Savart law with refraction that allows us to define a suitable space for the magnetic field. In Section~\ref{sec:weaksol}, we define weak solutions for~\eqref{eqs:outer-core}-\eqref{eqs:BC-outer} and prove their existence using a suitable Galerkin approximation. Finally, the appendix~\ref{sec:appendix} is dedicated to technical auxiliary results.

\subsection{Notation}
In this paper we denote by $C$ or $C_i$ an arbitrary (large) constant, whose value may be different each time it appears. This occurs when we need to accurately keep track of one of the terms in an inequality but not the others. When it does not matter, we use the notation $\lesssim$. We use $\sim$ when both $\lesssim$ and $\gtrsim$ are true.
We will also use standard function spaces. The function space $\cc C^n(\Omega)$ consists of $n$-times continuously differentiable functions on the set $\Omega$, while $\cc C_0^n(\Omega)$ are in addition compactly supported in $\Omega$. We also use the Sobolev spaces $H^\alpha(\Omega)$, $H_0^\alpha(\Omega)$, and the divergence-free Sobolev space $H^1_{\tdiv}(\Omega)$. 
Product spaces, such as $X_1(\Omega_1)\times X_2(\Omega_2)\times\cdots X_n(\Omega_n)$ (where $\Omega_j$ are mutually disjoint) are endowed with the norm 
\[
\norm{u}_{X_1(\Omega_1)\times X_2(\Omega_2)\times\cdots X_n(\Omega_n)}^2 = \norm{u}_{X_1(\Omega_1)}^2 +\norm{u}_{X_2(\Omega_2)}^2+\cdots+\norm{u}_{X_n(\Omega_n)}^2.
\]
The inner product of two elements in an inner product space $X$ is denoted by $\langle\cdot,\cdot\rangle_X$.
We also overload notation so that $\langle \cdot, \cdot\rangle_\Omega$ is the $L^2(\Omega)$ inner product when $\Omega$ is a domain rather than a function space. Similarly, the duality pairing $\langle \cdot, \cdot\rangle_{X',X}$ is the duality pairing between a function space $X$ and its dual $X'$. Finally, we will sometimes use the shorthand $L^p_tX$ to mean the space of $X$-valued functions $L^p([0,T];X)$ (where $T$ is an unspecified arbitrary time).

\subsection{Acknowledgments}
F.W. and T.S. were supported in part by NSF grant DMS-2438083, and J.B. was supported by NSF grants DMS-2108633 and DMS-2510949. ChatGPT was used on September 25, 2025 to create tikz code for \Cref{fig:setup}, which was later refined by the authors.
\section{Function spaces}
\label{sec:functionspace}
In this section, we introduce the function spaces in which we will search for the solution to the geodynamo problem. We take the time to prove embeddings that will be important later.

For the bouyancy variable, we must address the non-homogeneous boundary conditions. We will control the Dirichlet condition \cref{eq:outer-BC-temperature} by 
subtracting an extension of the boundary condition, i.e., we look for $\tau$ where $\theta = \tau+\theta^c$ and $\theta^c(x)$ is an extension of the Dirichlet condition $\theta_b$ on $\partial \core$ and which satisfies the Neumann condition $\nabla\theta^c\cdot n = 0$ on $\partial \incore$. We therefore define
\[
\mathbb{T}^1 = \left\{\tau\in H^1(\outcore ): \tau\big|_{\partial \core} = 0\right\}
\quad \text{and}\quad 
\bb T^0 = L^2(\outcore)
\]
and they are given the standard $H^1$ and $L^2$ norms.
We also introduce the bilinear form 
\[a(\tau,\varphi) = \langle \nabla \tau,\nabla \varphi\rangle_{L^2(\outcore)}\]
and we observe that the standard Poincar\'{e} inequality 
\[ 
\norm{\tau}_{L^2(\outcore)}\lesssim\norm{\nabla\tau}_{L^2(\outcore)} 
\]
is true in $\bb T^1$ due to the homogeneous Dirichlet condition on $\partial\outcore$.

We combine the fluid velocity $u$ and angular velocity $\omega$ into one vector space (a common technique in fluid-structure interaction problems, see for example \cite{conca_existence_2000}), 
\[\mathbb{U}^1=\left\{(u,\omega)\in H^1_{\tdiv}(\outcore)\times \R^3: u\big|_{\partial \core} = 0, u\big|_{\partial \incore} = \omega\times x\right\}\]
and
\[\bb U ^0 = \overline{\bb U ^1}^{L^2\times \R^3}.\]
Combining both variables is important since it will enable a good energy estimate for the approximating solutions.
We define the inner product in $\bb U^0$ to be 
\[
\langle (u,\omega),(v,\alpha)\rangle_{\bb U^0} = \rho \langle u,  v\rangle_{L^2(\outcore)} + J\omega\cdot\alpha
\]
and in $\bb{U}^1$ the inner product is defined by 
\[
\langle (u,\omega),(v,\alpha)\rangle_{\bb U^1} = \langle (u,\omega),(v,\alpha)\rangle_{\bb U^0} + 2\rho\nu a^S(u,v)
\]
where 
\[
a^S(u,v) = \langle D^S(u),D^S(v)\rangle_{L^2(\outcore)}.
\]
\begin{lemma}\label{lem:poincare-korn}
    Let $(u,\omega) \in \mathbb{U}^1$. Then $\norm{ u }_{H^1(\outcore)}\sim \norm{ u }_{L^2(\outcore)}+\norm{D^S(u)}_{L^2(\outcore)}$. In particular
    \[
    \norm{u}_{H^1(\outcore)}\lesssim 
    \norm{u}_{L^2(\outcore)} + \norm{D^S(u)}_{L^2(\outcore)}.
    \]
    More strongly, $\norm{D^S(u)}_{L^2(\outcore)} \sim \norm{\nabla u}_{L^2(\outcore)}$, which implies 
    \[ \norm{u}_{H^1(\outcore)}\lesssim\norm{D^S(u)}_{L^2(\outcore)}.
    \]
    \begin{proof}
        The weaker statement is Korn's inequality (e.g., Theorem 2.1 in~\cite{ciarlet_korns_2010}).

        For the stronger statement, consider $u$ on the entire core $\core$, where we extend to $\incore$ according to \cref{eq:momentum-inner-core}. Since this extended $u\in H_0^1(\core)$, we can apply the homogeneous version of Korn's inequality $\norm{\nabla u}_{L^2(\core)}\lesssim \norm{D^S(u)}_{L^2(\core)}$ (e.g., Chapter 3 in \cite{acosta_korns_2017}) and the standard Poincar\'{e} inequality as follows,
        \[\norm{u}_{H^1(\outcore)}\leq 
        \norm{u}_{H^1(\core)}\lesssim \norm{\nabla u}_{L^2(\core)}\lesssim\norm{D^S(u)}_{L^2(\core)}=\norm{D^S(u)}_{L^2(\outcore)}.\]
        The last equality comes from the fact that $\nabla(\omega\times x)$ is antisymmetric.
    \end{proof}
\end{lemma}

For the magnetic field, consider the spaces 
\[\bb B^1 = \set*{ B\in H_{\tdiv}^1(\incore)\times H_{\tdiv}^1(\outcore)\times H_{\tdiv}^1(\exterior): \tdiv B \in L^2(\R^3), \tcurl (\mu^{-1} B)\in L^2(\R^3), \tcurl B |_{\exterior}=0 }\]
and 
\[\bb B^0 = \overline{\bb B ^1 }^{L^2}.\]
We denote by $B_i$ the restriction of $B$ on $\incore$ (and analogous notation for the regions $\outcore,\exterior$). By $B$ we may mean either the triple $(B_i,B_o,B_e)$ or the function $\bbm{1}_\incore B_i+\bbm{1}_\outcore B_o+\bbm{1}_\exterior B_e$ depending on the context.

Observe that (as distributions)
\[\tcurl (\mu^{-1} B) \in L^2(\bb R^3)\iff \jump{\mu^{-1}B\times n}_{\partial \incore} = \jump{\mu^{-1}B\times n}_{\partial \core} = 0\]
and
\[\tdiv B \in L^2(\bb R^3)\iff \jump{B\cdot n}_{\partial \incore} = \jump{B\cdot n}_{\partial \core} =0 \]
(see, for example, \cite{meir_variational_1996}) so that our definition of $\bb B^1$ incorporates the zero-th order boundary conditions on the magnetic field. 

We define the inner product in $\bb B^0$ to be 
\[
\langle B, A\rangle _{\bb B^0} = \langle B, \mu^{-1} A\rangle_{L^2(\R^3)}
\]
and in $\bb B^1$ to be 
\[
\langle B, A\rangle _{\bb B^1} = \langle B, A\rangle_{\bb B^0} + d(B,A),
\]
where 
\[d(B,A) = \langle \sigma^{-1} \tcurl \mu^{-1} B, \tcurl \mu^{-1} A\rangle_{L^2(\core)}.\]
As before, we state the relevant function space embedding.
\begin{lemma}\label{lem:curl-norm}
    Let $B\in \bb{B}^1$. Then $\norm{B}_{ H^1(\incore)\times H^1(\outcore)\times H^1(\exterior)}\sim \norm{B}_{\bb B^1}$. In particular,
    \[
    \norm{B}_{H^1(\incore)\times H^1(\outcore)} \lesssim \norm{B}_{\bb B^1}.
    \]
\end{lemma}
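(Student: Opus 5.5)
The proof has two directions; only the lower bound on $\norm{B}_{\bb B^1}$ needs work. Throughout, write $H=\mu^{-1}B$ and $j=\tcurl H$. Since $B\in\bb B^1$, the distributional identities recalled before the statement give three facts. First, $\jump{B\cdot n}=0$ and $\jump{H\times n}=0$ on $\partial\incore$ and on $\partial\core$. Second, $\tdiv B=0$ on all of $\R^3$, because $B$ is piecewise divergence free with no normal jumps. Third, $j\in L^2(\R^3)$ with $\supp j\subset\overline{\core}$, because $\tcurl B_e=0$.

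\emph{Upper bound $\norm{B}_{\bb B^1}\lesssim\norm{B}_{H^1(\incore)\times H^1(\outcore)\times H^1(\exterior)}$.} The tangential jumps of $H$ vanish, so the distributional curl of $H$ carries no surface part. It therefore equals the piecewise curl, which on each region is $\mu_k^{-1}\tcurl B_k$. Hence $d(B,B)\le \max_k(\sigma_k^{-1}\mu_k^{-2})\,\norm{\nabla B}^2_{L^2(\incore)\cup L^2(\outcore)}$, and the $\bb B^0$ part is bounded trivially.

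\emph{Lower bound: splitting off the current.} I will show
\[
\norm{B}_{H^1(\incore)\times H^1(\outcore)\times H^1(\exterior)}\lesssim \norm{B}_{L^2(\R^3)}+\norm{j}_{L^2(\core)}.
\]
The plan is a Biot--Savart/Helmholtz splitting followed by elliptic transmission regularity. Set $H_1=\tcurl(\Gamma*j)$, where $\Gamma$ is the Newtonian kernel; note $\tdiv j=0$ in $\mathcal D'$. Standard Calder\'on--Zygmund estimates give $\tdiv H_1=0$, $\tcurl H_1=j$ and $\norm{\nabla H_1}_{L^2(\R^3)}\lesssim\norm{j}_{L^2}$. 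Because $j$ has compact support, $\norm{H_1}_{L^2(\{|x|<2R_o\})}\lesssim\norm{j}_{L^2}$ by Hardy--Littlewood--Sobolev. The remainder $H-H_1$ is curl free on $\R^3$, so $H-H_1=\nabla\varphi$. The constraint $\tdiv(\mu H)=0$ becomes the scalar transmission problem
\[
\tdiv(\mu\nabla\varphi)=-\tdiv(\mu H_1),
\]
and the right-hand side is a single layer $-\jump{\mu}\,H_1\cdot n\,\delta_S$ on the two spheres $S$. By the trace theorem its density lies in $H^{1/2}(S)$, with norm $\lesssim\norm{H_1}_{H^1(\{|x|<2R_o\})}\lesssim\norm{j}_{L^2}$.

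\emph{Lower bound: transmission regularity on the spheres.} This is the step I expect to be the main obstacle. I need piecewise $H^2$ regularity of $\varphi$ near each sphere for a divergence-form operator with piecewise constant coefficient and smooth interface. The estimate to aim for is
\[
\norm{\nabla\varphi}_{H^1(U\cap\Omega_k)}\lesssim\norm{\nabla\varphi}_{L^2(U')}+\norm{H_1\cdot n}_{H^{1/2}(S)},
\]
where $U\Subset U'$ are neighbourhoods of the interfaces. I would first invoke the result in \cite{melenk_regularity_2025}, or the classical transmission theory used in \cite{meir_variational_1996}. Failing that, I would prove it directly: flatten the sphere locally, take tangential difference quotients to control tangential derivatives, and recover the normal derivatives from the equation and the transmission conditions. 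Spherical symmetry would also allow an explicit vector-spherical-harmonic computation instead. The term $\norm{\nabla\varphi}_{L^2}$ on bounded sets is bounded by $\norm{H}_{L^2}+\norm{H_1}_{L^2}\lesssim\norm{B}_{L^2}+\norm{j}_{L^2}$. Away from the interfaces, the interior pieces of $H=H_1+\nabla\varphi$ are handled by the same bounds, since $\varphi$ is harmonic there up to $\tdiv H_1=0$.

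\emph{Lower bound: the far exterior.} On $\{|x|>R_o+\delta\}$ we have $\tcurl B=\tdiv B=0$, so each component of $B_e$ is harmonic. A Caccioppoli/interior estimate on dyadic shells, summed over shells, gives $\norm{\nabla B}_{L^2(|x|>R_o+2\delta)}\lesssim\norm{B}_{L^2(|x|>R_o+\delta)}$. Combining this with the near-interface bound from the previous step and using $\mu\sim 1$ yields the claimed equivalence. The stated ``in particular'' inequality then follows by dropping the exterior term.
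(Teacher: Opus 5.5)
Your proposal is correct and follows essentially the paper's route: a Biot--Savart field for the current $\tcurl(\mu^{-1}B)$, a gradient correction solving the piecewise-constant-$\mu$ refraction problem with $H^{1/2}$ jump data on the two spheres, and piecewise $H^2$ transmission regularity obtained by localizing near each interface (the paper cites Babu\v{s}ka's refraction theorem for this step). The one structural difference is that you decompose the given field $H=\mu^{-1}B$ directly, so you need neither the paper's Lax--Milgram construction nor its uniqueness step; the cost is that you keep $\norm{B}_{L^2}$ on the right-hand side, which is harmless since the $\bb B^1$ norm supplies it, and you treat the far exterior by Caccioppoli for the harmonic $B_e$ rather than by a Poisson estimate for the potential.
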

Because it requires a lot of exposition, the proof is postponed to the next subsection (\Cref{subsec:embeddings}). Theorem 9 in~\cite{ladyzhenskaya_solution_1960}, and Corollary 2.10 in~\cite{meir_variational_1996} provide similar results, but both are restricted to cases where there is only one interface.

\begin{lemma}\label{prop:magnetic-hilbert}
    The space $\mathbb{B}^1$ is a (separable) Hilbert space. 
\end{lemma}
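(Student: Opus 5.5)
We need to show that $\langle\cdot,\cdot\rangle_{\bb B^1}$ is an inner product on $\bb B^1$, that $\bb B^1$ is complete in the induced norm, and that it is separable. The inner product is clearly bilinear and symmetric. It is positive definite because $\mu$ is a positive piecewise constant, so $\langle B,B\rangle_{\bb B^0}\ge (\max\mu)^{-1}\norm{B}_{L^2(\R^3)}^2$, and $d(B,B)\ge 0$ since $\sigma^{-1}>0$ on $\core$. So the substance is completeness and separability. For both we will use \Cref{lem:curl-norm}, which says $\norm{\cdot}_{\bb B^1}$ is equivalent to the broken norm $\norm{\cdot}_{H^1(\incore)\times H^1(\outcore)\times H^1(\exterior)}$ on $\bb B^1$.

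\textbf{Completeness.} Let $(B^k)$ be Cauchy in $\bb B^1$. By \Cref{lem:curl-norm} it is Cauchy in $X:=H^1(\incore)\times H^1(\outcore)\times H^1(\exterior)$, which is complete, so $B^k\to B$ in $X$. We then check that $B\in\bb B^1$ by showing each defining constraint is closed under $X$-convergence.
\begin{enumerate}[(i)]
\item Since $\tdiv$ is continuous from $H^1$ to $L^2$ on each subdomain, $\tdiv B_j=0$ on each piece $j\in\{i,o,e\}$.
\item Likewise $\tcurl B_e=\lim\tcurl B^k_e=0$ in $L^2(\exterior)$.
\item For the distributional conditions on $\R^3$, $\tdiv B^k$ is the piecewise divergence, which is $0$. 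So for $\phi\in\cc C_0^\infty(\R^3)$,
\[
\langle B^k,\nabla\phi\rangle_{L^2(\R^3)}=0\quad\text{for all }k,
\]
and letting $k\to\infty$ in $L^2$ gives $\langle B,\nabla\phi\rangle=0$. Hence $\tdiv B=0\in L^2(\R^3)$.
\item The distributional curl $\tcurl(\mu^{-1}B^k)$ lies in $L^2(\R^3)$ and has no interface terms, so it coincides with the piecewise curl $\mu^{-1}\tcurl B^k_j$. This converges in $L^2$ to the piecewise curl of $\mu^{-1}B$. Since distributional limits are unique, $\tcurl(\mu^{-1}B)$ equals this $L^2$ function.
\end{enumerate}
Alternatively, one can pass to the limit in the trace jump conditions using continuity of the trace $H^1\to H^{1/2}(\partial\incore)$, $H^{1/2}(\partial\core)$. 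Finally, $B^k\to B$ in $\bb B^1$ because the $\bb B^1$ norm is dominated by the $X$ norm.

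\textbf{Separability.} By \Cref{lem:curl-norm}, the identity map is an isometric-up-to-constants embedding of $\bb B^1$ into $X$. Since $X$ is a product of $H^1$ spaces on Lipschitz domains, it is separable. A subspace of a separable metric space is separable, so $\bb B^1$ is separable in the $X$ norm, and hence in the equivalent $\bb B^1$ norm.

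\textbf{Main obstacle.} The only delicate point is the nonlocal-looking distributional conditions on all of $\R^3$, including the unbounded $\exterior$. These are handled by the test-function argument above, since $L^2(\R^3)$ convergence suffices. Everything else rests on \Cref{lem:curl-norm}, which we take as given.
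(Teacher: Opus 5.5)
Your proof is correct and takes essentially the same route as the paper: both treat $\bb B^1$ as a closed subspace of $H^1(\incore)\times H^1(\outcore)\times H^1(\exterior)$ and rely on the norm equivalence of \Cref{lem:curl-norm} for completeness and separability. The difference is minor: the paper gets closedness by writing $\bb B^1$ as the kernel of the bounded trace-jump and exterior-curl operator $T$, while you pass to the limit in the distributional $L^2$ conditions directly (and note the trace alternative), which also makes explicit the use of norm equivalence in the completeness step that the paper leaves implicit.
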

\begin{proof}
    Define the continuous linear operator $T:H^1_{\tdiv}(\incore)\times H^1_{\tdiv}(\outcore)\times H^1_{\tdiv}(\exterior)\to H^{1/2}(\partial\incore)\times H^{1/2} (\partial \core) \times L^2(\exterior)$ by 
    \[
    T:B\mapsto \p*{\jump*{(B\cdot n)n+\proj_{n\perp}(\mu^{-1} B) }_{\partial\incore} ,\jump*{(B\cdot n)n+\proj_{n\perp}(\mu^{-1} B) }_{\partial\outcore}, \left.\tcurl B\right|_{\exterior}}.
    \]
    It is clear that $\bb B^1 = \ker T$ as sets. The kernel of a bounded linear operator is a subspace. Separability follows from the fact that we have a subspace of a separable vector space where the norm is equivalent or unchanged. 
\end{proof}

\subsection{Magnetic field embedding}\label{subsec:embeddings}
In this section, we prove \Cref{lem:curl-norm}. First, we need to introduce relevant background. We require three key ingredients: the Beppo-Levi space, piecewise regularity of the refraction problem, and the Biot-Savart law.

In what follows, let $\Omega_-$ be a connected compact domain in $\mathbb R^3$ with smooth boundary and connected exterior $\Omega_+$, and $\mu$ be piecewise constant on $\Omega_-$ and $\Omega_+$. 

\subsubsection{Beppo-Levi space}
Define the space $\dot H^1(\Omega_+)$ to be the completion of compactly supported smooth functions in $\overline{\Omega}_+$ under the homogeneous $\dot H^1$ norm, i.e. under the norm
\[\norm{\phi}_{\dot{H}^1(\Omega_+)}=\norm{\nabla \phi}_{L^2(\Omega_+)}.\]
It is known that in $\dot H^1(\Omega_+)$ this norm is equivalent to the norm 
\[
\p*{\norm{\nabla \phi }_{L^2(\Omega_+)}^2+\norm{(1+|x|^2)^{-1/2}\phi}_{L^2(\Omega_+)}^2}^{1/2}
\]
(e.g., Proposition 2.10.10 in~\cite{sauter_boundary_2011}) and this space is called the \emph{Beppo-Levi} space. For functions in $\dot H^1(\Omega_+)$, observe that on compact subsets $\Omega'\subset \Omega_+$, the $L^2(\Omega')$ norm is controlled by the $\dot H^1(\Omega_+)$ norm.

\subsubsection{Refraction problem} The refraction problem arises when we have elliptic-type equations in divergence form with a piecewise constant coefficient. What is important to us is that solutions are piecewise-regular up to the boundary.
\begin{proposition}\label{prop:refraction}
    Let $\Omega_+$ and $\Omega_-$ be non-empty connected compact domains in $\R^3$ such that $\Gamma=\overline{\Omega}_+\cap \overline {\Omega}_-$ is a smooth codimension-1 surface, and $S = \partial (\overline{\Omega}_+\cup \overline{\Omega}_-)$ is also a smooth codimension-1 surface, and such that $\Gamma\cap S$ is empty. Let $\mu=\mu_\pm$ on $\Omega_\pm$ where $\mu_+,\mu_-$ are positive constants. Consider the refraction problem
    \begin{equation}\label{eq:refraction}
        \begin{cases}
            \tdiv (\mu \nabla \phi ) = f & \Omega_\pm
            \\
            \jump{\phi } = 0 & \Gamma
            \\
            \jump{\mu\nabla \phi\cdot n} = 0 & \Gamma
            \\
            \phi = 0 & S
        \end{cases}.
    \end{equation}
    Suppose $f\in H^s(\Omega_+)\times H^s(\Omega_-)$ for $s>-1$. Then there is a unique weak solution $\phi\in H^1(\Omega_+\cup\Gamma\cup \Omega_-)$ to \cref{eq:refraction}. Moreover, $\phi\in H^{s+2}(\Omega_+)\times H^{s+2}(\Omega_-)$ and it satisfies the estimate
    \[ \norm{\phi}_{H^{s+2}(\Omega_+)\times H^{s+2}(\Omega_-)}\lesssim \norm{f}_{H^s(\Omega_+)\times H^s(\Omega_-)}. \]
\end{proposition}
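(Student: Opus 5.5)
Existence and uniqueness come from Lax--Milgram; the piecewise regularity comes from localizing and flattening the interface, then proving regularity for a piecewise-constant transmission problem.

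\textbf{Weak formulation.} Set $D=\Omega_+\cup\Gamma\cup\Omega_-$, a bounded smooth domain with $\partial D = S$. Work in $H^1_0(D)$, which encodes both $\jump{\phi}=0$ on $\Gamma$ and $\phi=0$ on $S$. The weak formulation reads
\[
\int_D \mu \nabla\phi\cdot\nabla\psi \d x = -\langle f,\psi\rangle \qquad \text{for all } \psi\in H^1_0(D).
\]
The flux condition $\jump{\mu\nabla\phi\cdot n}=0$ is then natural: integrating by parts separately on $\Omega_\pm$ recovers it. Since $\mu\ge\min(\mu_+,\mu_-)>0$, the bilinear form is coercive on $H^1_0(D)$ by Poincar\'e. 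For $s\ge 0$, and more generally $s>-1$, we have $f\in (H^1_0(D))'$. (For negative $s$ one should interpret the piecewise data as an element of $H^{-1}(D)$, e.g.\ with $s\in(-1,-1/2)$ where extension by zero is harmless.) Lax--Milgram then gives a unique $\phi$ with $\norm{\phi}_{H^1}\lesssim\norm{f}_{H^{-1}}$.

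\textbf{Regularity away from the interface.} Take a partition of unity subordinate to three kinds of sets: interior sets of $\Omega_\pm$ away from $\Gamma$, a neighborhood of $S$, and a neighborhood of $\Gamma$. Because $\Gamma\cap S=\emptyset$, these neighborhoods can be taken disjoint. On the first two kinds of sets $\mu$ is constant, so standard interior and Dirichlet boundary elliptic regularity for the Laplacian give $H^{s+2}$ bounds. The cutoff produces commutator terms of lower order, which are absorbed by induction on $s$ starting from the $H^1$ bound.

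\textbf{Regularity near $\Gamma$ (main obstacle).} The key step is the localized transmission problem near $\Gamma$.
\begin{enumerate}
\item Flatten $\Gamma$ locally by a smooth diffeomorphism. This turns the problem into a divergence-form equation $\tdiv(\mu A\nabla\phi)=\tilde f$, where $A$ is smooth and uniformly elliptic and $\mu$ jumps across $\{x_3=0\}$.
\item Apply Nirenberg difference quotients in the tangential directions $x_1,x_2$. These commute with the jump in $\mu$, so tangential derivatives $\partial_{x'}\phi$ are again weak solutions of a transmission problem with $H^1$ control. Iterating gives $\partial_{x'}^k\phi\in H^1$ on each side.
\item Recover the normal derivatives on each side separately from the equation. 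In $\Omega_\pm$ one has
\[
\mu_\pm a_{33}\partial_3^2\phi = \tilde f - (\text{terms with at most one normal derivative}),
\]
and $a_{33}>0$. This yields $\partial_3^2\phi$ piecewise, and hence $\phi\in H^{s+2}$ on each side by induction.
\end{enumerate}
For non-integer $s$ and for $-1<s<0$, interpolate between the integer cases. At the lowest levels one can instead appeal to the classical transmission regularity results of Ladyzhenskaya or of Ne\v{c}as. The uniform bound follows by collecting the local estimates and absorbing the lower-order terms with the $H^1$ bound.
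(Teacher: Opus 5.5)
Your argument is correct in substance, but it is not what the paper does. The paper gives no proof: it quotes Theorem~1.1 of Babu\v{s}ka (1970), and points to Lemma~1 of Ladyzhenskaya (1960) for $f\in L^2$. What you wrote is essentially the classical proof behind those citations:
\begin{itemize}
\item Lax--Milgram in $H^1_0(D)$, which builds in $\jump{\phi}=0$ and $\phi|_S=0$ and leaves the flux condition natural.
\item Localization, using that $\Gamma$ and $S$ are at positive distance, followed by flattening of $\Gamma$.
\item Tangential difference quotients. These are legitimate because after flattening $\mu$ depends only on the sign of $x_3$. The $x'$-dependence of $A$ produces an extra term $-\tdiv(\mu(\partial_{x'}A)\nabla\phi)$ in the differentiated problem, which contains an inhomogeneous flux jump on $\Gamma$. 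The weak formulation with data of the form $\int G\cdot\nabla\psi$ absorbs this without change.
\item Recovery of $\partial_3^2\phi$ from the equation on each side.
\end{itemize}
This gives the estimate for integer $s\ge 0$, and interpolation of the piecewise spaces extends it to all $s\ge 0$. The citation buys brevity and the full stated range of $s$. Your route is self-contained, and it shows that the only case the paper actually needs requires nothing beyond $H^2$ difference-quotient regularity. That case is $s=0$, used in steps (2) and (3) of the proof of the Biot--Savart law with refraction, where the right-hand side is piecewise $L^2$.

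One correction in the negative range: your parenthetical has the intervals reversed. Extension by zero is bounded on $H^s$ only for $|s|<1/2$. So piecewise data give an element of $(H^1_0(D))'$ for $s>-1/2$, not for $s\in(-1,-1/2)$.

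For $s<-1/2$ the piecewise reading of the statement is in fact false. Take $0\ne g\in L^2(\Gamma)$. The surface distribution $g\,\delta_\Gamma$ belongs to $(H^1_0(D))'$ and to $H^s(D)$, and its restrictions to $\Omega_+$ and $\Omega_-$ vanish. Yet the corresponding solution, with $\tdiv(\mu\nabla\phi)=0$ in $\Omega_\pm$ and $\jump{\mu\nabla\phi\cdot n}=g$ on $\Gamma$, is nonzero. So no bound in terms of $\norm{f}_{H^s(\Omega_+)\times H^s(\Omega_-)}$ can hold.

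What your interpolation actually proves for $-1<s<0$ comes from interpolating the $H^{-1}(D)\to H^1_0(D)$ and $L^2(D)\to H^2(\Omega_+)\times H^2(\Omega_-)$ estimates. It is the bound for data in the global space $[H^{-1}(D),L^2(D)]_{1+s}$. That space can be identified with the piecewise space only when $s>-1/2$, and for $s\le -1/2$ the proposition has to be read with global data.
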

This is exactly Theorem 1.1 in~\cite{babuska_finite_1970}. A similar result for the specific case where $f\in L^2$ can be found with proof as Lemma 1 in~\cite{ladyzhenskaya_solution_1960}.

\subsubsection{Biot-Savart law}
We define the Newton potential in $\R^3$
    \[G(x) =\frac{-1}{4\pi |x|}\]
and the operator of convolution with the Newton potential $\cc L_0: L^2(\bb R^3)\to H^2(\bb R^3)$ by
    \[\cc L_0 \phi (x) := \int_{\R^3} G(x-y)\phi(y)\d y.\]
\begin{lemma}\label{lem:div-curl}
    Given $h\in L^2(\R^3)$, the vector field $f=-\tcurl \cc L_0h$ is the unique solution in $\dot H^1(\R^3)$ of 
    \[
    \begin{cases}
        \tdiv f = 0 & \R^3\\
        \tcurl f = h & \R^3
    \end{cases}
    \]
    and is bounded as a map $L^2_{\tdiv}(\R^3)\to \dot H^1(\R^3)$. Moreover, if $h$ has compact support, then $f\in H^1(\R^3)$.
\end{lemma}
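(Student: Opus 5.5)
Since $h\in L^2_{\tdiv}(\R^3)$ (the boundedness statement only concerns divergence-free $h$, and the equation $\tcurl f = h$ forces $\tdiv h=0$), the plan is to work on the Fourier side. First I check that $f=-\tcurl\cc L_0 h$ solves the system, then prove the $\dot H^1$ bound, then uniqueness, and finally the $H^1$ improvement for compactly supported $h$.

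\textbf{Solving the system.} Since $\Delta G=\delta$, we have $\Delta \cc L_0 h = h$ in the sense of distributions, with $\widehat{\cc L_0 h}(\xi) = -|\xi|^{-2}\hat h(\xi)$. Then $\tdiv f = -\tdiv\tcurl \cc L_0 h = 0$. Using $\tcurl\tcurl = \nabla\tdiv - \Delta$ and the fact that $\tdiv \cc L_0 h = \cc L_0 \tdiv h = 0$, we get
\[
\tcurl f = -\nabla\tdiv\cc L_0 h + \Delta\cc L_0 h = h .
\]

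\textbf{The $\dot H^1$ bound.} On the Fourier side, $\hat f(\xi) = -i\xi\times\hat h(\xi)/|\xi|^2$, so $|\xi||\hat f(\xi)|\le|\hat h(\xi)|$. Plancherel then gives $\norm{\nabla f}_{L^2}\le\norm{h}_{L^2}$. The only subtlety is membership in $\dot H^1(\R^3)$ as a completion of test functions. I handle it by approximating $h$ in $L^2$ by Schwartz functions whose Fourier transforms vanish near the origin. For such $h$, $\hat f$ is Schwartz, so $f$ is Schwartz and can be approximated by test functions in $\dot H^1$. The linear bound then passes to the limit.

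\textbf{Uniqueness.} If $g\in\dot H^1$ satisfies $\tdiv g=0$ and $\tcurl g=0$, then $\Delta g = \nabla\tdiv g - \tcurl\tcurl g = 0$, so each component of $g$ is harmonic. Since $\nabla g\in L^2$ and is harmonic, Liouville's theorem gives $\nabla g=0$. Hence $g$ is constant, and a constant is zero in $\dot H^1(\R^3)$. In three dimensions this holds because the weighted norm $\norm{(1+|x|^2)^{-1/2}\phi}_{L^2}$ is controlled for elements of the space, and a nonzero constant is not square integrable against $(1+|x|^2)^{-1}$.

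\textbf{The $H^1$ improvement, which I expect to be the main obstacle.} Suppose $\supp h\subset B_R$. We already control $\nabla f$ in $L^2$, so it remains to show $f\in L^2$.
\begin{itemize}
\item On $B_{2R}$, the $L^2$ norm is controlled by the $\dot H^1$ norm, by the local control remark for Beppo-Levi spaces.
\item For $|x|>2R$, write $f = -\int\nabla G(x-y)\times h(y)\,dy$. Taylor-expanding $\nabla G(x-y)$ around $y=0$ shows that the leading $O(|x|^{-2})$ term is $\nabla G(x)\times\int h$.
\end{itemize}
I would prove $\int h=0$ as follows. Each component satisfies $h_j=\tdiv(x_j h)$, using $\tdiv h=0$. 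Since $x_j h$ is compactly supported, its divergence integrates to zero.

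The remainder in the expansion is bounded by $|x|^{-3}\norm{h}_{L^1}$, with $\norm{h}_{L^1}\lesssim R^{3/2}\norm{h}_{L^2}$. Since $|x|^{-3}$ is square integrable at infinity in $\R^3$, this gives $f\in L^2(\R^3)$, and hence $f\in H^1(\R^3)$.
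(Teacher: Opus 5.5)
Your proof is correct. It differs from the paper only in that the paper gives no argument of its own: it simply cites Lemma 2.2(b) of \cite{meir_variational_1996}. Your Fourier-multiplier route is self-contained and makes explicit the three-dimensional facts the lemma depends on:
\begin{itemize}
\item the local square integrability $\abs{\xi}^{-1}\in L^2_{\mathrm{loc}}(\R^3)$;
\item the exclusion of constants from $\dot H^1(\R^3)$ through the weighted norm;
\item the decay of $\nabla G$ at infinity.
\end{itemize}
You also correctly observe that $\tdiv h=0$ must be assumed, even though the statement is phrased for arbitrary $h\in L^2(\R^3)$. The citation, for its part, buys brevity and keeps the result inside the weighted-space framework the paper uses elsewhere.

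Two small points.

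First, the cancellation $\int h=0$ in your last step is correct but unnecessary. In three dimensions $\abs{x}^{-2}$ is already square integrable on $\{\abs{x}>2R\}$, so the crude bound $\abs{f(x)}\lesssim\abs{x}^{-2}\norm{h}_{L^1}$ suffices. Even more directly, $h\in L^1$ gives $\hat h\in L^\infty$, and then $\abs{\hat f}\le\abs{\hat h}/\abs{\xi}$ puts $\hat f$ in $L^2$.

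Second, for $h$ without compact support, $G*h$ need not converge and $\abs{\xi}^{-2}\hat h$ need not be locally integrable. The paper's assertion that $\cc L_0$ maps $L^2$ into $H^2$ has the same defect. Your first step is therefore best phrased by defining $f$ directly through $\hat f$, or through the kernel $\nabla G$, which is integrable near the origin and square integrable at infinity. You would then check $\tdiv f=0$ and $\tcurl f=h$ on the Fourier side. This is the object your $\dot H^1$ estimate already works with.
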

This is Lemma 2.2 (b) in~\cite{meir_variational_1996}.

\subsubsection{Biot-Savart with refraction.}

\begin{proposition}\label{lem:biot-savart-with-refraction}
    Let $B\in \bb B^1$. Then
    \begin{equation}\label{eq:biot-savart-with-refraction}
        \norm{B}_{\dot{H}^1(\incore)\times \dot{H}^1(\outcore)\times \dot{H}^1(\exterior)} \lesssim \norm{\tcurl \mu^{-1} B}_{L^2(\core)}.
    \end{equation}
\end{proposition}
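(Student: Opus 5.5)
Set $H := \mu^{-1}B$ and $h := \tcurl H$. Since $B\in\bb B^1$, the distributional curl of $H$ on $\R^3$ lies in $L^2(\R^3)$. Because $\tcurl B|_{\exterior}=0$ and $\mu$ is constant on $\exterior$, $h$ is supported in $\overline{\core}$, so $h\in L^2(\R^3)$ has compact support and $\norm{h}_{L^2(\R^3)}=\norm{\tcurl\mu^{-1}B}_{L^2(\core)}$. Moreover $\tdiv h=0$ in the sense of distributions. The plan is to write $H$ as a Biot--Savart field plus a gradient correction, and to control each piece by $\norm{h}_{L^2}$.

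\textbf{Step 1 (Biot--Savart part).} By \Cref{lem:div-curl}, $f:=-\tcurl\cc L_0 h\in H^1(\R^3)$ satisfies $\tdiv f=0$, $\tcurl f=h$, and $\norm{\nabla f}_{L^2(\R^3)}\lesssim\norm{h}_{L^2}$.

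\textbf{Step 2 (gradient correction).} The field $H-f$ is curl-free on all of $\R^3$, which is simply connected. It lies in $L^2_{loc}$, and it is in $L^2$ globally provided $B\in L^2$, which is part of $\bb B^1$. Hence $H-f=\nabla\phi$ for some $\phi\in\dot H^1(\R^3)$. The constraint $\tdiv B\in L^2$ with $\tdiv B=0$ piecewise (so $\tdiv B=0$ on $\R^3$, jumps of $B\cdot n$ vanishing) gives the refraction equation
\[
\tdiv(\mu\nabla\phi)=-\tdiv(\mu f)\quad\text{in }\R^3 .
\]
Inside each region the right-hand side is $-\mu\tdiv f=0$; the only source is the surface term $-\jump{\mu}\, f\cdot n$ on $\partial\incore$ and $\partial\core$. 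In weak form,
\[
\int\mu\nabla\phi\cdot\nabla\psi=-\int\mu f\cdot\nabla\psi .
\]
Lax--Milgram on $\dot H^1(\R^3)$ gives $\norm{\nabla\phi}_{L^2}\lesssim\norm{f}_{L^2(\core)}$. However, I want a bound on the piecewise $H^1$ norm, not just $L^2$. Since $f\in H^1$, the interface data $\jump{\mu}f\cdot n\in H^{1/2}$ on each interface.

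\textbf{Step 3 (piecewise $H^2$ regularity of $\phi$).} Reduce to \Cref{prop:refraction}. Take a large ball $D\supset\overline\core$ and a cutoff equal to $1$ near $\overline{\core}$. To get homogeneous transmission data, subtract a lift $\psi$ that is piecewise $H^2$ and absorbs the flux jump: on $\outcore$, solve a Neumann-type problem with $\partial_n\psi$ equal to $f\cdot n$ times constants, with $\norm{\psi}_{H^2}\lesssim\norm{f}_{H^1}$. The remainder then solves a refraction problem with $L^2$ data, apply \Cref{prop:refraction} twice, once on $\incore\cup\outcore$ with interface $\partial\incore$ and once on $\outcore'\cup(\exterior\cap D)$, or localize using cutoffs near each interface. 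This yields
\[
\norm{\nabla^2\phi}_{L^2(\incore)\times L^2(\outcore)\times L^2(\exterior\cap D)}\lesssim\norm{f}_{H^1(\core)}+\norm{\nabla\phi}_{L^2}.
\]
Far out, $\phi$ is harmonic, and interior estimates together with the decay of exterior harmonic functions control $\nabla^2\phi$ on $\exterior\setminus D$ by $\norm{\nabla\phi}_{L^2}$.

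\textbf{Step 4 (assembly).} Combining, $B=\mu(f+\nabla\phi)$ piecewise, so
\[
\norm{\nabla B}_{\text{piecewise}}\lesssim\norm{\nabla f}_{L^2}+\norm{f}_{L^2(\core)}+\norm{\nabla^2\phi}.
\]
Two issues remain. First, $\norm{f}_{L^2(\core)}$ is bounded by $\norm{\nabla f}_{L^2}$ via Sobolev ($\dot H^1\subset L^6$) on the bounded set $\core$, hence by $\norm{h}_{L^2}$. Second, uniqueness: is the decomposition forced, i.e.\ is the kernel trivial? If $h=0$ then $H=\nabla\phi$ with $\phi\in\dot H^1$ solving the homogeneous refraction problem, so $\phi$ is constant and $B=0$. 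This shows the decomposition is canonical; no harmonic-field obstruction occurs since $\R^3$ and the exterior of a ball have trivial first cohomology.

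The main obstacle is Step 3: turning the jump in $\mu$ into a problem covered by \Cref{prop:refraction}, which assumes one interface, zero Dirichlet data on an outer surface, and homogeneous transmission conditions. I would handle this by localizing with cutoffs to neighbourhoods of each interface separately, where the cutoff errors are lower order and bounded by $\norm{\nabla\phi}_{L^2}$, and by lifting the inhomogeneous flux jump with a piecewise $H^2$ function built from $f\cdot n$.
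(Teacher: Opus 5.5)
Your proposal is correct and follows essentially the same route as the paper. Both arguments use the Biot--Savart field $f=-\tcurl \cc L_0 h$ plus a gradient correction $\nabla\phi$ solving the refraction problem with flux-jump data $-\jump{\mu}\, f\cdot n$; this is lifted by piecewise $H^2$ functions and localized near $\partial\incore$ and $\partial\core$ (via \Cref{prop:refraction}) and near infinity. The one difference is that you read off $\mu^{-1}B=f+\nabla\phi$ directly from $B$, since a curl-free $L^2(\R^3)$ field is the gradient of a $\dot H^1$ function, so the estimate applies to $B$ itself. This avoids the paper's construct-then-uniqueness step, whose justification in the paper is rather terse.
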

\begin{proof}
    We will follow the technique of Theorem 2.7 in~\cite{meir_variational_1996}. Our strategy will be to show that for any $h\in L^2_{\tdiv}(\core)$ there exists a unique $f\in\bb B^1$ such that $\tcurl (\mu^{-1}f) = h$, and that this $f$ satisfies the desired estimate. To put it in different words, we pretend that we only know $\tcurl (\mu^{-1} B) = h$, and we construct a function $f$ such that $\tcurl (\mu^{-1} f )= h$ and such that the desired inequality is satisfied. Finally, we prove that this $f$ is unique, so that it is in fact $B$ and therefore $B$ satisfies the desired inequality.
    
    Extending $h$ to be zero outside $\core$, we define $f_1:= -\tcurl \cc L_0 h$. By \Cref{lem:div-curl}, we know $f_1\in \dot H^1(\R^3)$ (in fact, $H^1(\R^3)$ because $h$ has compact 
    support) and
    \[\tdiv f_1 = 0,\qquad \tcurl f_1 = h\]
    as functions in $L^2(\R^3)$. Moreover, 
    \[\norm{f_1}_{\dot H^1(\R^3)}\lesssim \norm{h}_{L^2(\R^3)}\]
    and thus, taking the trace,
    $f_1\cdot n\in H^{1/2}(\Gamma)$ where $\Gamma$ is any smooth codimension 1 surface. 
    If we now set $f = \mu f_1$, then we find that 
    \[\tcurl (\mu^{-1} f) = \tcurl f_1 = h\]
    which fulfills some of our requirements, but on the other hand 
    \[\tdiv f \not\in L^2(\R^3)\] 
    because 
    \[\jump{f\cdot n}_{\partial\incore,\partial\core} = \jump{\mu f_1\cdot n}\]
    which is in general nonzero, because $\jump{f_1\cdot n}$ is zero and the trace of $f_1\cdot n$ is in general nonzero. This means that we need to correct this error in the jump condition. We look for a decomposition $f=\mu(f_1+f_2)$.
    In particular, we need to find $f_2$ solving 
    \[
    \begin{cases}
        \tcurl f_2 = 0 & \R^3 
        \\
        \tdiv f_2 = 0 &  \incore,\outcore,\exterior 
        \\
        \jump{\mu f_2\cdot n} = -\jump{\mu f_1\cdot n} &  \partial\incore,\partial\core\,.
    \end{cases}
    \] 
    Instead of solving this directly, we look for a solution $f_2 = \nabla \phi$ where $\phi\in \dot H^1(\R^3)$ satisfies $\nabla ^2\phi\in L^2(\bb R^3)$ and solves 
    \[
    \begin{cases}
        \tdiv (\mu \nabla \phi) = 0 & \incore,\outcore,\exterior
        \\
        \jump{\mu \nabla \phi\cdot n} = -\jump{\mu f_1\cdot n} &   \partial \incore,\partial\core\,.
    \end{cases}
    \]
    Observe that the right hand side of the jump condition $\jump{\mu f_1\cdot n}$ is in the space $H^{1/2}(\partial\incore)$ and $H^{1/2}(\partial\outcore)$. This means that we can find a bounded, compactly supported, piecewise $H^2$ extension of this jump condition. 
    In particular, we can choose $g_o\in H^2(\incore)\times H^2(\outcore)\times H^2(\exterior)$ with $\supp g_o \subseteq R_o-\delta \leq |x|\leq R_o$ (with $\delta<R_o-R_i$) such that $\jump{\mu\nabla g_o\cdot n}_{\partial \core}= -\jump {\mu f_1\cdot n}_{\partial\core}$. 
    Similarly, we can choose $g_i\in H^2(\incore)\times H^2(\outcore)\times H^2(\exterior)$ with $\supp g_i\subseteq \overline \incore$ such that $\jump{\mu\nabla g_i\cdot n}_{\partial \incore}= -\jump {\mu f_1\cdot n}_{\partial\incore}$. We can now define $\phi = \widetilde \phi + g_i + g_o$ and define a problem for $\widetilde{\phi}$:
    \[
    \begin{cases}
        \tdiv(\mu\nabla \widetilde \phi) =-\tdiv (\mu \nabla g_i)-\tdiv (\mu \nabla g_o) & \incore,\outcore,\exterior \\
        \jump{\mu\nabla \widetilde \phi\cdot n} = 0 & \partial\incore,\partial\outcore\,.
    \end{cases}
    \]
    Applying Lax-Milgram to the variational formulation in the space $\dot H^1(\R^3)$ 
    \[\int_{\R^3}\nabla \widetilde \phi \cdot \nabla \psi\d x = \int_{\R^3}\p*{\tdiv (\mu \nabla g_i) + \tdiv (\mu \nabla g_o)} \psi\d x\]
    yields a solution $\widetilde \phi \in \dot H^1(\R^3)$ satisfying 
    \[\norm{\nabla\widetilde  \phi }_{L^2(\R^3)}\lesssim \norm {g_i}_{H^2(\incore)}+\norm{g_o}_{H^2(\outcore)}.\]
    However, we also require $\widetilde \phi \in \dot H^2(\incore)\times\dot H^2(\outcore)\times \dot H^2(\exterior)$. In order to show that our solution has this higher piecewise regularity, we will localize to three different regions.
    \begin{enumerate}
        \item We localize near infinity. Take $\chi \in \cc C^\infty(\R)$ such that $\chi =1$ for $r\geq R_o+2$, $\chi=0$ for $r\leq R_o+1$, and with bounded derivative (say $\abs*{\chi '}\leq C$). Since on this region, $\mu = \mu_e$ is constant, by considering the Poisson equation solved by $\chi(|x|)\widetilde \phi$
        \[
        \Delta(\chi(|x|)\widetilde \phi) = \widetilde \phi\Delta\chi(|x|) +2\nabla \chi(|x|)\cdot\nabla \widetilde \phi+\chi(|x|)\Delta \widetilde \phi
        \]
        we get that 
        \[\norm{\widetilde \phi}_{\dot H^2(\{|x|\geq R_o+2\})}\lesssim \norm{\nabla \widetilde \phi}_{L^2(\R^3)}+\norm{\widetilde \phi}_{L^2(\{|x|\in\supp \Delta \chi\})}\lesssim \norm{\widetilde \phi}_{\dot H^1(\R^3)}.\]
        \item We localize near $\partial \core$. This time take $\chi=1$ on $[(R_o+R_i)/2, R_o+2]$ and $\chi=0$ for $r\geq R_o+3$ and $r\leq(R_o+3R_i)/4$. We can then apply the existence and regularity result \Cref{prop:refraction} for the refraction problem
        \[
        \begin{cases}
            \tdiv (\mu \nabla (\chi(|x|)\widetilde \phi)\,) = \mu \widetilde \phi\Delta \chi + 2\mu \nabla \widetilde \phi \cdot \nabla \chi +\chi \tdiv(\mu \nabla \widetilde \phi) & \exterior\cap \{|x|\leq R_0+3\},\core
            \\
            \jump{\chi(|x|)\widetilde \phi} = 0  &\partial\core
            \\
            \jump{\mu\nabla (\chi(|x|)\widetilde \phi )\cdot n  } = 0 & \partial \core
            \\
            \chi(|x|)\widetilde \phi = 0 & |x| = R_0+3
        \end{cases}
        \]
        to conclude
        \begin{multline}
            \p*{\norm{\widetilde \phi}^2_{\dot H^2(\exterior\cap \{|x|\leq R_o+2\})}+\norm{\widetilde \phi}^2_{\dot H^2(\outcore\cap \{ |x|\geq (R_o+R_i)/2\})}}^{1/2}
            \\
            \lesssim \norm{\nabla \widetilde \phi}_{L^2(\R^3)}+\norm{\widetilde \phi}_{L^2(\{|x|\leq R_o+3\})}+ \norm{g_o}_{H^2(\outcore)}.
        \end{multline}
        \item We localize near $\partial \incore$. This time, take $\chi =1$ for $r<(R_i+R_o)/2$ and $\chi=0$ for $r\geq (R_i+3R_o)/4$. We apply \Cref{prop:refraction} again to conclude
        \[
        \p*{\norm{\widetilde \phi}^2_{\dot H^2(\outcore\cap \{|x|< (R_o+R_i)/2\})}+\norm{\widetilde \phi}^2_{\dot H^2(\incore)}}^{1/2}\lesssim 
        \norm{\nabla \widetilde \phi}_{L^2(\R^3)}+\norm{\widetilde \phi}_{L^2(\{|x|\leq (R_o+R_i)/2\}}+ \norm{g_i}_{H^2(\outcore)}.
        \]
    \end{enumerate}
    Putting these local estimates together, we conclude that $\widetilde \phi\in \dot H^2(\incore)\times\dot H^2(\outcore)\times \dot H^2(\exterior)$ and satisfies the estimate
    \[\norm{\widetilde \phi}_{\dot H^1(\R^3)} +\norm{\widetilde \phi}_{\dot H^2(\incore)\times\dot H^2(\outcore)\times \dot H^2(\exterior)}\lesssim \norm{g_i}_{H^2(\incore)}+\norm{g_o}_{H^2(\outcore)}\lesssim \norm{f_1}_{\dot H ^1(\R^3)}. \]
    Thus
    \[\norm{f_2}_{ H^1(\incore)\times H^1(\outcore)\times  H^1(\exterior)}\lesssim \norm{f_1}_{\dot H^1(\R^3)}\]
    and therefore 
    \[\norm{f}_{\dot H^1(\incore)\times\dot H^1(\outcore)\times \dot H^1(\exterior)}\lesssim\norm{f_1}_{\dot H^1(\bb R ^3)}+\norm{f_2}_{\dot H^1(\incore)\times\dot H^1(\outcore)\times \dot H^1(\exterior)}\lesssim\norm h_{L^2(\R^3)}.\]
    It is easy to verify that $f$ solves $\tdiv f = 0$ in $L^2(\R^3)$ and $\tcurl (\mu^{-1} f) = h$ in $L^2(\R^3)$.

    It remains to show uniqueness. This follows from the fact that we can replace the original control $\norm{f_1}_{\dot H^1}\lesssim\norm{h}_{L^2}$ with $\norm{f_1}_{ H^1}\lesssim\norm{h}_{L^2}$ due to the compact support of $h$ in $\core$ (\Cref{lem:div-curl}).
    In particular, if $f$ and $g$ are two functions in $\bb B^1$ solving $\tcurl (\mu^{-1} f) = \tcurl(\mu^{-1}g) = h$, then $f-g$ satisfies 
    \[\norm {f-g}_{H^1(\incore)\times H^1(\outcore)\times  H^1(\exterior)}\lesssim 0\implies f-g=0.\]
    This concludes the proof.
\end{proof}

\begin{proof}[Proof of Lemma 2.2]
    We apply \Cref{lem:biot-savart-with-refraction} and deduce that 
    \[ \norm{ B}_{\dot{H}^1(\incore)\times \dot{H}^1(\outcore)\times \dot{H}^1(\exterior)} \lesssim \norm{\tcurl \mu^{-1} B}_{L^2(\R^3)} = \norm{\tcurl \mu^{-1} B}_{L^2(\core)}.
    \]
    The desired inequality follows. 
    \end{proof}

\begin{remark}
    We have in fact shown (for $B\in \bb B^1$) the stronger statement 
    \[\norm{\tcurl (\mu^{-1}B)}_{L^2(\R^3)}\sim \norm{B}_{\bb B^1}.\]
\end{remark}

\section{Weak solutions}\label{sec:weaksol}
In this section we define Leray-Hopf-type weak solutions for the geodynamo problem, and we prove existence of such solutions (given suitable assumptions on initial conditions, boundary conditions, and forcing terms).

We begin by deriving weak forms of each equation in order to motivate our definition of weak solutions and to highlight the role of the electric field in the exterior. Assuming there exists a piecewise smooth solution $B,(u,\omega),\theta$ to the geodynamo problem, we test $\partial_t B$ against $\mu^{-1}A$ where $A$ is a piecewise smooth test function $A$ satisfying the same transmission, $\tdiv$-free, and $\tcurl$-free conditions as $B$.
\begin{multline*}
\int_{\mathbb{R}^3} \partial_{t}B\cdot \mu^{-1}A \, \mathrm{d}x- \int_{\outcore}\operatorname{curl}(u\times B)\cdot \mu^{-1}A \,\mathrm{d}x - \int_{\incore}\operatorname{curl}(u\times B)\cdot \mu^{-1}A \,\mathrm{d}x  
\\
= \int_{\exterior} -\operatorname{curl}E\cdot \mu^{-1}A \, \mathrm{d}x -\int_{\outcore} \operatorname{curl}(\sigma^{-1} \operatorname{curl}\mu^{-1}B)\cdot \mu^{-1}A \, \mathrm{d}x- \int_{\incore} \operatorname{curl}(\sigma^{-1} \operatorname{curl}\mu^{-1}B)\cdot \mu^{-1}A \, \mathrm{d}x .
\end{multline*}
We now perform some integration by parts explicitly. First, for the frozen-in flux terms
\begin{multline*}
- \int_{\outcore}\operatorname{curl}(u\times B)\cdot \mu^{-1}A \,\mathrm{d}x - \int_{\incore}\operatorname{curl}(u\times B)\cdot \mu^{-1}A \,\mathrm{d}x  
\\
=\int_{\partial \outcore} \left[ (u\times B)\times \mu^{-1}A \right]\cdot n  \, \mathrm{d}S +\int_{\partial \incore} \left[ (u\times B)\times \mu^{-1}A \right]\cdot n  \, \mathrm{d}S 
\\
-\int_{\outcore} \operatorname{curl}\mu^{-1}A\cdot(u\times B) \, \mathrm{d}x 
-\int_{\incore} \operatorname{curl}\mu^{-1}A\cdot(u\times B) \, \mathrm{d}x .
\end{multline*}
Since $u=0$ on $\partial\core$, the boundary integral on $\partial\core$ disappears. Similarly, we can re-write the remaining boundary integral
\[
\int_{\partial\incore} [(u\times B)\times \mu^{-1}A]\cdot n\,\mathrm d S = \int_{\partial \incore }(\mu_{i}^{-1}A_{i}\cdot u_{i})(B_{i}\cdot n)-(\mu_{o}^{-1}A_{o}\cdot u_{o})(B_{o}\cdot n)  \, \mathrm{d}S
\]
and apply the transmission conditions \cref{eq:inner-BC-magnetic-normal,eq:inner-BC-magnetic-tangential,eq:inner-BC-no-slip} (using in addition the fact that $u$ is tangential on $\partial\incore$) to conclude that this term also vanishes.

Continuing with the magnetic diffusion terms, \begin{multline*}
-\int_{\outcore} \operatorname{curl}(\sigma^{-1} \operatorname{curl}\mu^{-1}B)\cdot \mu^{-1}A \, \mathrm{d}x- \int_{\incore} \operatorname{curl}(\sigma^{-1} \operatorname{curl}\mu^{-1}B)\cdot \mu^{-1}A \, \mathrm{d}x 
\\
=-\int_{\partial \outcore} (\sigma^{-1} \operatorname{curl}\mu^{-1}B\times \mu^{-1}A)\cdot n \, \mathrm{d}S 
-\int_{\partial \incore} (\sigma^{-1} \operatorname{curl}\mu^{-1}B\times \mu^{-1}A)\cdot n \, \mathrm{d}S 
\\
-\int_{\outcore} \frac{1}{\sigma} \operatorname{curl}\mu^{-1}B\cdot \operatorname{curl}\mu^{-1}A \, \mathrm{d}x 
-\int_{\incore} \frac{1}{\sigma} \operatorname{curl}\mu^{-1}B\cdot \operatorname{curl}\mu^{-1}A \, \mathrm{d}x.
\end{multline*}
Again, we apply \cref{eq:inner-BC-magnetic-tangential,eq:inner-BC-neumann} combined with the fact that $\sigma^{-1}\tcurl \mu^{-1} B\times n$ is tangential to deduce that the boundary integrals on $\partial\incore$ cancel. Finally, we account for the term involving the electric field.
\[
\int_{\exterior} \operatorname{curl}E\cdot \mu^{-1}A \, \mathrm{d}x =\int_{\partial \exterior} (E\times \mu^{-1}A)\cdot n \, \mathrm{d}S +\int_{\exterior} E\cdot \operatorname{curl}\mu^{-1}A \, \mathrm{d}x =  \int_{\partial \exterior} -(E\times n)\cdot \mu^{-1}A \, \mathrm{d}S. 
\]
We have used that $\tcurl \mu^{-1}A=0$ on $\exterior$. Using \cref{eq:outer-BC-magnetic-tangential,eq:outer-BC-neumann}, the remaining boundary integral on $\partial\exterior$ cancels with the previous boundary integral on $\partial \core$. All together, this yields
\begin{equation}\label{eq:magnetic-test}
\left\langle \partial_t B, A\right\rangle_{\bb B^0}+d(B,A) = \left\langle u\times B,\tcurl \mu^{-1} A\right\rangle_{\core}
\end{equation}
which will be the equation used to define the weak solution for the magnetic field. Observe that in the same way pressure is eliminated from the Navier-Stokes equations after testing against a divergence-free field, so is the electric field eliminated when testing against a field $A$ that is curl-free on $\exterior$.

Testing $\partial_t u$ with a $\rho v$ where $v$ is smooth and satisfying the same divergence-free constraint and boundary conditions as $u$ (except with $v=\alpha\times x$ on $\partial\incore$) produces 
\[\langle \partial_t u, \rho v\rangle + \langle u\cdot \nabla u, \rho v\rangle +\langle \nabla p, \rho v\rangle = \langle \nu\Delta u, \rho v \rangle +\langle \rho^{-1}\mu^{-1}\tcurl B\times B, \rho v\rangle +\langle \theta g \radial ,\rho v\rangle -\langle Le_3\times u, \rho v\rangle .\]
All inner products above are in $\outcore$.
After integrating by parts and applying boundary conditions this becomes
\begin{multline}\label{eq:momentum-test}
    \langle \partial_t u, \rho v\rangle_\outcore - \rho b(u,v,u) +\rho\nu a( u,v) 
    = -\mu_o^{-1}b(B,v,B) +\langle \theta g \radial ,v\rangle_\outcore -\langle Le_3\times u, \rho v\rangle_\outcore 
    \\
    + \int_{\partial \outcore} \mu_o^{-1}(B\cdot v)(B\cdot n)\d S+\rho\nu\int_{\partial \outcore} v\cdot(\nabla u) n\d S
\end{multline}
where $b(u,v,w) = \int_\outcore [(u\cdot \nabla)v]\cdot w\d x$ is the standard trilinear form associated with the Navier-Stokes equations.
In order to remove the surface integrals, we test the equation for $\omega$ with $\alpha$. We use that $u=\omega\times \radial $ and $v=\alpha\times \radial $ on $\partial \incore$.
\[ 
J\frac{\d}{\d t}\omega \cdot \alpha  = - (JLe_3\times \omega )\cdot \alpha +\int_{\partial \incore}(2\rho\nu D^s(u)+\mu_o^{-1}B_o\otimes B_o)n\cdot v\d S.
\]
Integration by parts and application of boundary conditions leads to the following.
\begin{multline}\label{eq:angular-test}
    J\frac{\d}{\d t}\omega\cdot  \alpha = 
    - (JLe_3\times \omega)\cdot \alpha \\
    +\int_{\partial \incore} \mu_0^{-1}(B\cdot n)(B\cdot v)\d S+ \rho \nu\int_{\partial \incore} v\cdot(\nabla u)n\d S-\rho\nu \int_\outcore \nabla v :\nabla u^T \d x.
\end{multline}
Combining \cref{eq:angular-test,eq:momentum-test}, canceling matching terms, and using that $2a^S(u,v) = \langle \nabla u, \nabla v\rangle + \langle \nabla u, \nabla^T v\rangle$ leads to an equation without boundary integrals.
\begin{multline}\label{eq:momentum-and-angular-test}
    \langle \partial_t (u,\omega), (v,\alpha)\rangle_{\bb U^0} - \rho b(u,v,u) +2\rho \nu a^S( u,v) 
    \\
    =  
    -\mu_o^{-1}b(B,v,B) + \langle \theta g \radial,\rho v\rangle_\outcore -\langle Le_3\times u, \rho v\rangle _{\outcore}
    -( JLe_3\times \omega)\cdot  \alpha .
\end{multline}
Testing $\partial_t \tau$
against a smooth test function $\varphi$ is relatively straightforward. After integrating by parts and applying boundary conditions the equation is 
\begin{equation}
    \label{eq:temperature-test}
    \langle \partial_t \tau, \varphi\rangle -b(u, \varphi,\tau)+\kappa a(\tau,\varphi) = -b(u,\theta^c,\varphi) -\kappa a(\theta^c,\varphi)-\kappa\langle f_b, \varphi\rangle_{\partial \incore}+\langle f, \varphi\rangle.
\end{equation}
In order to derive a formal energy identity, one sums \cref{eq:magnetic-test,eq:momentum-and-angular-test,eq:temperature-test} with $A=B$, $(v,\alpha)=(\omega,u)$ and $\varphi=\tau$. The resulting identity is found after carefully canceling terms and using boundary conditions. These detailed cancellations will be shown in the analysis of the Galerkin approximating solutions later in this section.
\begin{multline*}
    \label{eq:a-priori-energy}
    \frac{1}{2}\frac{\d}{\d t}\p*{ \norm{\mu^{-1/2}B}_{L^2(\R^3)}^2+\norm{\rho^{1/2}u}_{L^2(\outcore)}^2+ J|\omega|^2+ \norm{\tau}_{L^2(\outcore)}^2
    }
    \\
    +\left( \norm{\sigma^{-1/2}\tcurl \mu^{-1}B}_{L^2(\core)}^2 + 2\rho \nu\norm{D^S( u)}_{L^2(\outcore)}^2+\kappa \norm{\nabla \tau}_{L^2(\outcore)}^2 \right)
    \\
    =
    \langle \theta^c g\radial,\rho u\rangle_\outcore+\langle \tau g\radial , \rho u\rangle_\outcore -b(u,\theta^c,\tau)-\kappa a(\theta^c,\tau)
    \\
    - \kappa\langle f_b, \tau\rangle_{H^{-1/2}(\partial \incore),H^{1/2}(\partial\incore)}
    +\langle f, \tau\rangle_{(\bb T^1)', \bb T ^1}.
\end{multline*}
This \emph{a-priori} energy identity is indicative that the system will be amenable to Galerkin analysis, since (after further estimates) it is clear there will be at most exponential growth in time of the magnetic energy, kinetic energy, and temperature energy, as well as control over derivatives of the solution due to magnetic dissipation, viscosity in the momentum, and dissipation in the buoyancy.
We are now prepared to rigorously define a weak solution.

\begin{definition}\label{def:weak-sol}
    Let $B_0\in \bb B^0$, $(u_0,\omega_0)\in \bb U^0$, $\tau_0\in \bb T^0$. A weak solution to the \cref{eqs:outer-core,eqs:inner-core,eqs:exterior,eqs:BC-inner,eqs:BC-outer} is a a triple $(B,(u,\omega),\tau)\in L^2([0,T];\mathbb{B}^1\times\mathbb{U}^1\times \mathbb{T}^1)\cap L^\infty([0,T]; \bb B^0\times\bb U ^0\times\bb T^0)$ such that 
    for any test triple $(A, (v,\alpha), \varphi)\in 
    \cc{C}^1_0([0,T); \bb{B}^1\times\bb{U}^1\times \bb{T}^1)$, the following equations are satisfied. 
    \begin{subequations}\label{eq:weak-sol}
        \begin{equation}\label{eq:weak-magnetic}
             \int_0^T \langle B, \partial_t A\rangle_{\bb B^0} \d t - \langle B_0,A(0)\rangle_{\bb B^0}
             \\
             -\int_0^T d(B,A)  +  \langle u\times B, \tcurl \mu^{-1}A\rangle_\core \d t = 0
        \end{equation}
        \begin{multline}\label{eq:weak-momentum}
            \int_0^T \langle  (u,\omega), \partial_t (v,\alpha)\rangle_{\bb U^0}  \d t -\langle  (u,\omega)_0,  (v,\alpha)(0)\rangle_{\bb U^0}
            + \int_0^T  \rho b(u,v,u) 
            -2\rho \nu a^S( u,v)\d t 
            \\
            =  
            \int_0^T \mu_o^{-1}b(B,v,B) 
            - \langle \theta g \radial ,\rho v\rangle _\outcore
            +\langle Le_3\times u, \rho v\rangle _\outcore
            + J( Le_3\times \omega)\cdot   \alpha \d t
        \end{multline}
        \begin{multline}\label{eq:weak-temperature}
            \int_0^T \langle \tau, \partial_t \varphi\rangle_\outcore \d t -\langle \tau_0,\varphi(0)\rangle_\outcore
            + \int_0^T b(u, \varphi, \tau )
            -\kappa a(\tau,\varphi)\d t 
            \\
            =
            \int_0^T b(u,\theta^c,\varphi) 
            + \kappa a(\theta^c,\varphi)
            + \kappa\langle f_b, \varphi\rangle_{H^{-1/2}(\partial \incore),H^{1/2}(\partial\incore)}
            -\langle f, \varphi\rangle_{(\bb T^1)', \bb T ^1} \d t.
        \end{multline}
    \end{subequations}
    Moreover,
    \begin{enumerate}
        \item The solution satisfies the following energy inequality.
        \begin{multline}
            \label{eq:energy-total}
            \frac{1}{2}\p*{
            \norm{\mu^{-1/2}B}_{L^2(\R^3)}^2+\norm{\rho^{1/2}u}_{L^2(\outcore)}^2+ J|\omega|^2+ \norm{\tau}_{L^2(\outcore)}^2
            }(t)
            \\
            +\int_0^t 
            \norm{\sigma^{-1/2}\tcurl \mu^{-1}B}_{L^2(\core)}^2 + 2\rho \nu\norm{D^S( u)}_{L^2(\outcore)}^2+\kappa \norm{\nabla \tau}_{L^2(\outcore)}^2 
            \d s
            \\
            \hspace{-3.5 cm}
            \leq 
            \frac{1}{2}\p*{
            \norm{\mu^{-1/2}B}_{L^2(\R^3)}^2+\norm{\rho^{1/2}u}_{L^2(\outcore)}^2+ J|\omega|^2+ \norm{\tau}_{L^2(\outcore)}^2
            }(0)
            \\
            +
            \int_0^t
            \langle \theta^c g\radial,\rho u\rangle_\outcore+\langle \tau g\radial , \rho u\rangle_\outcore -b(u,\theta^c,\tau)-\kappa a(\theta^c,\tau)
            \\
            - \kappa\langle f_b, \tau\rangle_{H^{-1/2}(\partial \incore),H^{1/2}(\partial\incore)}
            +\langle f, \tau\rangle_{(\bb T^1)', \bb T ^1}
            \d s.
        \end{multline}
        \item The solution satisfies the initial condition.
        \begin{equation}\label{eq:initial-condition}
            B(0) = B_0, \quad (u,\omega)(0) = (u_0,\omega_0), \quad \tau(0) = \tau_0.
        \end{equation}
        \item The solution is weakly continuous with values in $\bb{B}^0\times \bb{U}^0\times \bb{T}^0$.
    \end{enumerate}
\end{definition}

\begin{remark}
    With the simplifying assumption that $\theta_b=0$, we have $\theta^c=0$ and $\theta=\tau$, so \cref{eq:weak-temperature} simplifies to 
    \begin{multline}\label{eq:weak-temperature-simplified}
        \int_0^T \langle \tau, \partial_t \varphi\rangle_\outcore \d t-\langle \tau_0, \varphi(0) \rangle_\outcore 
        + \int_0^T b(u, \varphi, \tau )
        -\kappa a(\tau,\varphi)\d t 
        \\
        =
        \int_0^T 
        \kappa\langle f_b, \varphi\rangle_{H^{-1/2}(\partial \incore),H^{1/2}(\partial\incore)}
        -\langle f, \varphi\rangle_{(\bb T^1)', \bb T ^1} \d t.
    \end{multline}
    In fact, this simplification holds if $\theta_b$ is constant, since we can take a constant extension and the terms with $\theta^c$ only depend on its gradient.
\end{remark}

\subsection{Existence}
In this section, we prove the following rigorous version of \Cref{thm:existence-intro}.

\begin{theorem}\label{thm:existence}
    Suppose $B_0\in \bb B^0$, $(u_0,\omega_0)\in \bb U^0$, $\tau_0\in \bb T^0$, $\theta_b\in H^{1/2}(\partial\core)$, $f_b\in H^{-1/2}(\partial \core)$, and $f\in L^2_t(\bb T^1)'$. Then there exists a triple $(B,(u,\omega),\tau)$ that satisfies \Cref{def:weak-sol}, i.e., a weak solution to the geodynamo problem.
\end{theorem}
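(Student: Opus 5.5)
We prove \Cref{thm:existence} by a Faedo--Galerkin scheme built on the Hilbert spaces $\bb B^1$, $\bb U^1$, $\bb T^1$. By \Cref{prop:magnetic-hilbert} the space $\bb B^1$ is separable. Hence we choose orthonormal bases $\{A_k\}$ of $\bb B^0$, $\{(v_k,\alpha_k)\}$ of $\bb U^0$ and $\{\varphi_k\}$ of $\bb T^0$ whose finite spans are dense in $\bb B^1$, $\bb U^1$, $\bb T^1$ respectively. A convenient choice is eigenfunctions of the operators associated with $d$, $a^S$ and $a$, which are compact resolvents on bounded pieces; alternatively, Gram--Schmidt on a countable dense subset suffices. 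Let $P_m$ denote the orthogonal projections. We first fix the extension $\theta^c\in H^1(\outcore)$ of $\theta_b$, with zero Neumann data on $\partial\incore$, by solving a harmonic problem; then $\norm{\theta^c}_{H^1}\lesssim\norm{\theta_b}_{H^{1/2}}$. The approximate solution $(B_m,(u_m,\omega_m),\tau_m)$ solves the projected versions of \cref{eq:magnetic-test,eq:momentum-and-angular-test,eq:temperature-test} for all test functions in the $m$-dimensional spans, with projected initial data. This is an ODE system with quadratic, hence locally Lipschitz, nonlinearities, so Picard--Lindel\"of gives a local solution. Global existence on $[0,T]$ follows from the energy bound below.

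The energy estimate comes from testing with $(B_m,(u_m,\omega_m),\tau_m)$ itself. The key cancellations are as follows. The term $b(u,u,u)=0$ because $u\cdot n=0$ on $\partial\core$ and $u\cdot n=(\omega\times x)\cdot n=0$ on $\partial\incore$; similarly $b(u,\tau,\tau)=0$. The Coriolis terms vanish pointwise. The Lorentz/induction pair $-\mu_o^{-1}b(B,u,B)$ and $\langle u\times B,\tcurl\mu^{-1}B\rangle_\core$ must cancel exactly. On $\outcore$ this is the vector identity $\int(\tcurl B\times B)\cdot u=-\int(u\times B)\cdot\tcurl B$ together with $b(B,u,B)=\int(\tcurl B\times B)\cdot u+\tfrac12\int B\cdot\nabla|B|^2\ldots$; this needs care with boundary terms. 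On $\incore$, $u=\omega\times x$ is rigid, so the contribution $\int_\incore(u\times B)\cdot\tcurl\mu^{-1}B$ must be matched by the $\alpha=\omega$ component. In the weak formulation this term has been absorbed through the derivation of \cref{eq:momentum-and-angular-test}, and I would verify by direct integration by parts, using the transmission conditions built into $\bb B^1$ (in particular the jumps of $B\cdot n$ and $\mu^{-1}B\times n$), that the total is zero. The remaining forcing terms are bounded as follows. The buoyancy terms are controlled by Cauchy--Schwarz. The boundary term uses $\abs{\langle f_b,\tau\rangle}\lesssim\norm{f_b}_{H^{-1/2}}\norm{\tau}_{H^1}$ and the trace theorem. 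The term $\abs{b(u,\theta^c,\tau)}\le\norm{u}_{L^6}\norm{\nabla\theta^c}_{L^2}\norm{\tau}_{L^3}$ is handled by interpolation, Young's inequality and Poincar\'e. We absorb the dissipation using \Cref{lem:poincare-korn} and \Cref{lem:curl-norm}. Gr\"onwall then gives uniform bounds on $B_m$ in $L^\infty\bb B^0\cap L^2\bb B^1$, and likewise for the other unknowns. From the Remark after \Cref{lem:curl-norm}, the dissipation $d(B,B)$ controls the full piecewise $H^1$ norm of $B$ on $\core$, which is what makes the nonlinear terms integrable.

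Next, for compactness we bound the time derivatives in the duals of the finite spans uniformly. For this we estimate $\langle u\times B,\tcurl\mu^{-1}A\rangle_\core\lesssim\norm{u}_{L^3}\norm{B}_{L^6(\core)}\norm{A}_{\bb B^1}$, and the terms $b(B,v,B)$ and $b(u,v,u)$ similarly. This yields $\partial_tB_m$ bounded in $L^{4/3}(0,T;(\bb B^{s})')$ for a higher-regularity space $\bb B^s$ spanned by the basis, if needed. We then apply Aubin--Lions. The exterior domain is unbounded, but compactness is only needed on $\core$ and $\outcore$, where $H^1\hookrightarrow L^2$ is compact. Thus $B_m\to B$ strongly in $L^2(0,T;L^2(\core))$, and similarly for $u_m$ and $\tau_m$; on $\exterior$ only weak convergence is required, since no nonlinearity lives there.

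Finally, we pass to the limit in the weak formulation \cref{eq:weak-sol} with test functions in the finite spans, then use density. This step is routine given strong $L^2$ and weak $L^2H^1$ convergence. The energy inequality \cref{eq:energy-total} follows from weak lower semicontinuity of the norms, together with strong convergence in the forcing terms. Weak continuity and attainment of the initial data follow from the time-derivative bounds via the standard Lions--Magenes argument. I expect the main obstacle to be the exact cancellation of the Lorentz and induction terms across the inner-core interface, where the rigid rotation, the jump in $\mu$ and the torque must combine correctly. A secondary obstacle is choosing a Galerkin basis for $\bb B^1$ regular enough that the dual estimate on $\partial_tB_m$ holds despite the unbounded exterior. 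For this I would try eigenfunctions of the operator associated with $\langle\cdot,\cdot\rangle_{\bb B^1}$ restricted to the closed subspace, invoking \Cref{lem:curl-norm}.
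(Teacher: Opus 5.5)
Your plan has the same architecture as the paper's proof: Galerkin in $\bb B^1,\bb U^1,\bb T^1$, the energy identity, compactness on $\core$, passage to the limit, lower semicontinuity, and the Lions--Magenes argument for the initial data and weak continuity. The genuine difference is how you obtain compactness.

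\textbf{The paper's compactness argument.} The paper never bounds $\partial_t B_m$ in a dual space. Following Temam, it extends the Galerkin solutions by zero outside $[0,T]$, Fourier transforms in time, and tests the transformed system against $\widehat B_m$ and $\widehat{(u,\omega)}_m$ themselves. An $L^1_t(\bb B^1)'$ bound on the \emph{unprojected} right-hand side then gives $\abs{\hat t}\,\norm{\widehat B_m}^2_{\bb B^0}\lesssim\norm{\widehat B_m}_{\bb B^1}$. This is a time derivative of order $\gamma<1/4$ directly in $L^2(\core)$, and the fractional Aubin--Lions theorem is applied with $X=X_1=L^2(\core)$. It works for an arbitrary basis because $P_m$ is never estimated.

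\textbf{What your route needs.} You need $\norm{P_m}_{\cc{L}(\bb B^1)}$ bounded uniformly in $m$, so the Gram--Schmidt alternative you mention would not work. You need the spectral basis, and its existence requires $\bb B^1\hookrightarrow\bb B^0$ to be compact \emph{including on the unbounded exterior}. That does not follow from compactness on bounded pieces plus \Cref{lem:curl-norm}, but it is true. Since $\exterior$ is simply connected, $B_e=\nabla\psi$ with $\psi$ harmonic and decaying, so $\norm{B_e}_{L^2(\exterior)}\lesssim\norm{B_o\cdot n}_{H^{-1/2}(\partial\core)}$. The map $B\mapsto B_o\cdot n$ is compact from $\bb B^1$ into $H^{-1/2}(\partial\core)$, by the trace theorem and the compact embedding $H^{1/2}(\partial\core)\hookrightarrow H^{-1/2}(\partial\core)$. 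With this, run Aubin--Lions in the triple $\bb B^1\subset\bb B^0\subset(\bb B^1)'$. The naive chain with $X=L^2(\core)$ is not an embedding chain, because $L^2(\core)$ does not inject into $(\bb B^1)'$: it is only tested against divergence-free fields.

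\textbf{Trade-off.} Your route gives the familiar uniform bound $\partial_t B_m\in L^{4/3}(0,T;(\bb B^1)')$. The paper's route is independent of the choice of basis and needs no spectral or compactness information on the exterior.

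\textbf{The step you flag as the main obstacle.} The Lorentz/induction cancellation is short. Integrate $\langle u\times B,\tcurl\mu^{-1}B\rangle$ by parts separately on $\incore$ and $\outcore$. The interface terms $(B\cdot n)(u\cdot\mu^{-1}B)$ cancel across $\partial\incore$, because $B\cdot n$ and $\proj_{n\perp}\mu^{-1}B$ are continuous and $u$ is continuous and tangential. They vanish on $\partial\core$, where $u=0$. Then expand $\tcurl(u\times B)=(B\cdot\nabla)u-(u\cdot\nabla)B$:
\begin{itemize}
\item the transport part integrates to zero against $B$;
\item on $\incore$, $(B\cdot\nabla)(\omega\times x)=\omega\times B\perp B$;
\item on $\outcore$, what remains is exactly $\mu_o^{-1}b(B,u,B)$.
\end{itemize}
Nothing needs to be matched against the $\alpha$-component, since the magnetic torque was already absorbed into $-\mu_o^{-1}b(B,v,B)$ when \cref{eq:momentum-and-angular-test} was derived.
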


Our overall strategy will be to use Galerkin approximation. We will first find a sequence of approximate solutions to finite dimensional approximating problems, and then we will show that there exists a limit of that sequence satisfying \Cref{def:weak-sol}. The functional framework is designed to be amenable to Galerkin approximation, in that an energy inequality will be satisfied at the level of the approximations, and in that relevant embeddings of function spaces will be available to deduce convergence to a true solution. In particular, we will need to use the Korn and Biot-Savart-type inequalities (\Cref{lem:curl-norm,lem:poincare-korn}) to deal with the nonlinear terms. We will also need to use the Aubin-Lions lemma to show strong convergence on a subsequence of the approximations in $L^2_tL^2_x$.

\subsubsection{Galerkin approximation}
Because $\mathbb{B}^1, \mathbb{U}^1$, and $\mathbb{T}^1$ are separable Hilbert spaces, they each have a countable orthogonal basis, which we enumerate as $\{A_j\}, \{(v,\alpha)_j\}$, and $\{\varphi_j\}$, respectively. We let $\mathcal{B}_m,\mathcal{U}_m$, and $\mathcal{T}_m$ 
be the spans of the first $m$ basis elements of each space, respectively. We also overload $P_m$ to be defined as orthogonal projection in the appropriate $L^2$-like space (e.g., $\bb B ^0$ for $B$, $\bb U^0$ for $(u,\omega)$) onto the span of the first $m$ basis elements of whichever space is relevant. We then define the $m$th approximate solution $(B_m, (u,\omega)_m, \tau_m)$ as 
\begin{subequations}
    \begin{equation}
        B_m = \sum_{j=1}^m g^B_{j,m}(t)A_j
    \end{equation}
    \begin{equation}
        (u,\omega)_m = \sum_{j=1}^m g^u_{j,m}(t)(v,\alpha)_j
    \end{equation}
    \begin{equation}
        \tau_m = \sum_{j=1}^m g^\tau_{j,m}(t)\varphi_j
    \end{equation}
\end{subequations}
where the $g_{j,m}^B(t), g_{j,m}^u(t), g_{j,m}^\tau(t)$ are chosen such that the following equations are satisfied $\forall j\in\{1\,\ldots,m\}$:
\begin{subequations}\label{eqs:galerkin}
    \begin{equation}\label{eq:magnetic-galerkin}
        \langle \partial_t B_m, \mu^{-1}A_j\rangle_{\R^3}  + d(B_m,A_j) 
        = \langle u_m\times B_m, \tcurl \mu^{-1}A_j\rangle_\core,
    \end{equation}
    \begin{multline}
        \label{eq:momentum-and-angular-galerkin}
        \vspace{-6 pt}
        \langle \partial_t u_m, \rho v_j\rangle_\outcore 
        + J \frac{d}{dt}\omega_m\cdot   \alpha _j
        - \rho b(u_m,v_j,u_m) 
        +2\rho\nu a^S( u_m,v_j) 
        \\
        =  
        -\mu_o^{-1} b(B_m,v_j,B_m) 
        + \langle \theta_m g \radial,\rho v_j\rangle_\outcore 
        -\langle Le_3\times u_m, \rho v_j\rangle_\outcore  
        - J L(e_3\times \omega_m)\cdot  \alpha_j,
    \end{multline}
    \begin{multline}
        \label{eq:temperature-galerkin}
        \vspace{-6pt}
        \langle \partial_t \tau_m, \varphi_j \rangle_\outcore 
        -b (u_m, \varphi_j,\tau_m)
        +\kappa a(\tau_m,\varphi_j) 
        \\ 
        = 
        -b(u_m,\theta^c,\varphi_j) 
        -\kappa a(\theta^c,\varphi_j)
        -\kappa\langle f_b, \varphi_j \rangle_{H^{-1/2}(\partial \incore),H^{1/2}(\partial \incore)}
        +\langle f, \varphi_j\rangle_{(\bb T^1)', \bb T ^1}.
    \end{multline}
\end{subequations}
We supplement this system of \glspl{ode} with the initial conditions 
\[(B_m(0),(u,\omega)_m(0), \tau_m(0)) = P_m(B_0,(u,\omega)_0,\tau_0).\]

Since this is a locally Lipschitz system of \glspl{ode}, we know that there is a solution up to some time $T^\ast$ for each $m$. Our next task is to show that $T^\ast$ is uniform in $m$, for which we need energy estimates that are uniform in $m$.

\subsubsection{Galerkin energy}
The energy of an approximate solution $(B_m, (u,\omega)_m, \tau_m)$ is given by testing against $(\mu^{-1}B_m, \rho (u,\omega)_m, \tau_m)$ according to
\cref{eq:magnetic-galerkin,eq:momentum-and-angular-galerkin,eq:temperature-galerkin} and summing them all together. 
\begin{align*} 
    &\frac{d}{dt}\frac{1}{2} \norm{\mu^{-1/2}B_m}_{L^2(\R^3)}^2+\norm{\sigma^{-1/2} \tcurl \mu^{-1} B_m}_{L^2(\outcore)}^2
    \\
    & \hspace{2cm} + \frac{d}{dt}\frac{1}{2}\norm{\rho^{1/2}u_m}_{L^2(\outcore)}^2
    + J\frac{d}{dt}\frac{1}{2}|\omega_m|^2
    + 2\rho \nu a^S(u_m,u_m)
    + \frac{d}{dt}\frac{1}{2}\norm{\tau_m}_{L^2(\outcore)}^2+\kappa a(\tau_m,\tau_m)
    \\
    & \hspace{ 1.2 cm} = 
    \underbrace{\langle u_m\times B_m, \tcurl \mu^{-1}B_m\rangle_\core }_{\text{(c)}}
    +\underbrace{\rho b(u_m,u_m,u_m)}_{\text{(a)}} - \underbrace{\mu_o^{-1}b(B_m,u_m,B_m)}_{\text{(c)}}
    \\
    &\hspace{1.6cm}
     + \langle \theta_m g \radial, \rho u_m\rangle _\outcore  
     - \underbrace{\langle Le_3\times u_m, \rho u_m\rangle_\outcore }_{\text{(b)}} 
     - \underbrace{J( Le_3\times  \omega_m)\cdot ( \omega_m)}_{\text{(b)}} 
     + \underbrace{b(u_m,\tau_m,\tau_m)}_{\text{(a)}}
    \\
    &\hspace{1.6cm} 
    -b(u_m,\theta^c,\tau_m)
    -\kappa a(\theta^c, \tau_m)
    -\kappa \langle f_b,\tau_m\rangle_{H^{-1/2}(\partial \incore),H^{1/2}(\partial \incore)} 
    +\langle f, \tau_m\rangle _{(\bb T^1)', \bb T^1}.
\end{align*}
We make the following simplifications.
\begin{itemize}
    \item We use that $b(u_m,\cdot, \cdot)$ is antisymmetric in the last two arguments since $u_m\in H^1_{\tdiv}$, $u_m=0$ on $\partial \core$, and $u_m\cdot n = 0$ on $\partial \incore$. This eliminates the terms labeled (a).
    \item We use that $\langle a\times b, a\rangle=0$ to eliminate terms labeled (b).
    \item The last simplification is to show $\langle u_m\times B_m,\tcurl \mu^{-1} B_m\rangle_\core$ and $ \mu^{-1}b(B_m, u_m, B_m)$ (labeled (c) ) cancel each other out. We first integrate by parts (and use the boundary conditions from the definition of $\mathbb{B}^1$) to deduce 
    \[
    \langle u_m\times B_m,\tcurl\mu^{-1} B_m\rangle_\core = \langle \tcurl(u_m\times B_m) , \mu^{-1}B_m\rangle_\incore +\langle \tcurl(u_m\times B_m) , \mu^{-1}B_m\rangle_\outcore.
    \]
    Next we expand
    \[
    \tcurl(u_m\times B_m) = u_m\tdiv B_m - B_m\tdiv u_m + (B_m\cdot \nabla) u_m - (u_m\cdot \nabla) B_m.
    \] 
    The first two terms vanish due to incompressibility. The last also vanishes when we consider the $L^2$ inner product over $\outcore$ or $\incore$ with $\mu^{-1} B_m$ due to antisymmetry. We are left with 
    \[\langle u_m\times B_m,\tcurl\mu^{-1} B_m\rangle_\core = \mu_o^{-1} \left\langle (B_m\cdot\nabla)u_m,B_m\right\rangle_\outcore + \mu_i^{-1} \left\langle (B_m\cdot\nabla)u_m,B_m\right\rangle_\incore.\] 
    At this point we must show that $\left\langle (B_m\cdot\nabla)u_m,B_m\right\rangle_\incore=0$. To do so, we use that $u_m=\omega_m\times x$ on $\incore$:
    \[\int_\incore(B_m\cdot \nabla)\p*{\omega\times x}\cdot B_m\d x = \int_\incore\p*{\omega\times B_m}\cdot B_m\d x = 0.\]
\end{itemize}
We now have the following energy expression.
\begin{multline}\label{eq:galerkin-energy-exact}
    \frac{d}{dt}\frac{1}{2}\p*{
     \norm{\mu^{-1/2}B_m}_{L^2(\R^3)}^2
    + \rho\norm{u_m}_{L^2(\outcore)}^2
    + J |\omega_m|^2+ \norm{\tau_m}_{L^2(\outcore)}^2
    }
    \\
    + \norm{\sigma^{-1}\tcurl \mu^{-1}B_m}_{L^2(\core)}^2 + 2\rho \nu\norm{ D^S(u_m)}_{L^2(\outcore)}^2+\kappa \norm{\nabla \tau_m}_{L^2(\outcore)}^2
    \\
    =
    \langle \theta^c g\radial,\rho u_m\rangle_\outcore 
    +\langle \tau_m g\radial , \rho u_m\rangle_\outcore  
    -b(u_m,\theta^c,\tau_m)
    \\
    -\kappa a(\theta^c,\tau_m)
    -\kappa \langle f_b,\tau_m\rangle_{H^{-1/2}(\partial\incore),H^{1/2}(\partial \incore)}
    +\langle f, \tau_m\rangle_{(\bb T^1)', \bb T^1}.
\end{multline}

Finally we need to estimate the right hand side of \cref{eq:galerkin-energy-exact}. Recall that $\theta^c$ is an extension into $\outcore$ of the boundary conditions $\theta=\theta_b$ on $\partial\core$ and $\nabla\theta\cdot n=0$ on $\partial \incore$. In particular, we can pick this extension to satisfy $\norm{\nabla\theta^c}_{L^2(\outcore)}\lesssim \norm{\theta_b}_{H^{1/2}(\partial\outcore)}$. We establish the following crude bounds (note that some terms can be estimated in multiple ways by modifying exponents and using different Sobolev embeddings to have slightly different requirements on the integrability of $\theta_b, f_b, f$): 
\begin{itemize}
    \item $|\langle \theta^c g\radial, \rho u_m\rangle_\outcore |\lesssim \norm{\rho u_m}_{L^2(\outcore)}\norm{\theta^c}_{L^2(\outcore)}\norm{g \radial }_{L^\infty(\outcore )} \lesssim_{\rho, g,\theta_b} \norm{u_m}_{L^2(\outcore)}$.
    \item $|\langle \tau_m g\radial, \rho u_m\rangle_\outcore |\lesssim \norm{\tau_m}_{L^2(\outcore)}\norm{\rho u_m}_{L^2(\outcore)}\norm{g \radial }_{L^\infty(\outcore)}\lesssim_{\rho,g} \norm{\tau_m}_{L^2(\outcore)}\norm{u_m}_{L^2(\outcore)} $.
    \item Using Ladyzhenskaya's inequality,
    \begin{multline*} |b(u_m,\theta^c,\tau_m)|\leq \norm{u_m}_{L^4(\outcore) }\norm{\nabla \theta^c}_{L^2(\outcore )}\norm{\tau_m}_{L^4(\outcore)} 
    \\
    \lesssim_{\theta_b} \norm{u_m}_{L^2(\outcore)}^{1-d/4} \norm{\tau_m}_{L^2(\outcore)}^{1-d/4} \norm{\nabla u_m}_{L^2(\outcore)}^{d/4} \norm{\nabla \tau_m}_{L^2(\outcore)}^{d/4}. 
    \end{multline*}
    \item $|\kappa a(\theta^c, \tau_m)| \lesssim  \kappa \norm{\nabla \theta^c}_{L^2(\outcore)} \norm{\nabla \tau_m}_{L^2(\outcore)} \lesssim_{\kappa,\theta_b} \norm{\nabla \tau_m}_{L^2(\outcore)}$. 
    \item Using a trace inequality for $\tau_m$, \[
    |\kappa \langle f_b,\tau_m\rangle_{H^{-1/2}(\partial\incore),H^{1/2}(\partial \incore) }|
    \lesssim \kappa \norm{f_b}_{H^{-1/2}(\partial \incore )}\norm{\tau_m }_{H^{1/2}(\partial{\incore })}
    \lesssim_{\kappa,f_b}  \norm{\nabla \tau_m }_{L^2(\outcore)}.\]
    \item $|\langle f,\tau_m\rangle_{(\bb T^1)', \bb T^1} |\lesssim \norm{f}_{(\bb T^1)'}\norm{\tau_m}_{H^1(\outcore)}
    \lesssim_{f}\norm{\nabla\tau_m}_{L^2(\outcore)}$.
\end{itemize}
Using these estimates, it follows that
\begin{multline}\label{eq:galerkin-energyest-simple}
    \frac{d}{dt} F_m
    + G_m
    \lesssim_{\rho,\kappa,g,\theta_b,f_b, f}
    \norm{u_m}_{L^2(\outcore)} + \norm{\tau_m}_{L^2(\outcore) }\norm{u_m}_{L^2(\outcore)}+ \norm{\nabla \tau_m}_{L^2(\outcore)} + \norm{\tau_m}_{L^2(\outcore)} 
    \\
    + \norm{u_m}_{L^2(\outcore)}^{1-d/4}\norm{\tau_m}_{L^2(\outcore)}^{1-d/4}\norm{\nabla u_m}_{L^2(\outcore)}^{d/4}\norm{\nabla \tau_m}_{L^2(\outcore)}^{d/4}
\end{multline}
where $F_m$ and $G_m$ are defined as the corresponding quantities in \cref{eq:galerkin-energy-exact}, i.e.,
\[F_m = \frac{1}{2}\p*{
     \norm{\mu^{-1/2}B_m}_{L^2(\R^3)}^2
    + \rho\norm{u_m}_{L^2(\outcore)}^2
    + J |\omega_m|^2+ \norm{\tau_m}_{L^2(\outcore)}^2
    },
\]
\[G_m = 
     \norm{\sigma^{-1}\tcurl \mu^{-1}B_m}_{L^2(\core)}^2 + 2\rho \nu\norm{ D^S( u_m)}_{L^2(\outcore)}^2+\kappa \norm{\nabla \tau_m}_{L^2(\outcore)}^2.
\]
By applying Young's inequality and \Cref{lem:poincare-korn}, we absorb $\norm{\nabla \tau_m}_{L^2(\outcore)}$ and $\norm{\nabla u_m}_{L^2(\outcore)}$ into $G_m$. We also use Young's to convert other terms from the right side into terms that appear in $F_m$. In the most simplified form,
\begin{equation}\label{eq:galerkin-energyest-crude}
    \frac{d}{dt}F_m+ \frac{1}{2}G_m\lesssim_{\rho,\nu,\kappa,g,\theta_b,f_b, f} 1 + F_m.
\end{equation}
Because $G_m\geq 0$, we can apply Gr\"{o}nwall's inequality to 
\[\frac{d}{dt}F_m \leq C+CF_m\]
which leads to 
\[F_m(t)\leq (F_m(0)+1)\exp(Ct).
\]
Finally, we use this estimate for $F_m$ and integrate both sides of \cref{eq:galerkin-energyest-crude} to arrive at 
\[
F_m(t)+\int_0^t \frac{1}{2}G_m(s)\d s\leq CT + C(F_m(0)+1)(1+\exp(CT))\leq CT+C(F(0)+1)(1+\exp(CT)).
\]
Here we define 
\[F(0) = \frac{1}{2}\p*{\norm{\mu^{-1/2}B_0}_{L^2(\R^3)}^2 + \rho\norm{u_0}_{L^2(\outcore)}^2 + J |\omega_0|^2 + \norm{\tau_0}_{L^2(\outcore)}^2} \]
which satisfies
\[
    F_m(0) = \frac{1}{2}\p*{ \norm{\mu^{-1/2}B_m(0)}_{L^2(\R^3)}^2 + \rho\norm{u_m(0)}_{L^2(\outcore)}^2 +  J|\omega_m(0)|^2 + \norm{\tau_m(0)}_{L^2(\outcore)}^2
    } 
    \leq F(0)
\]
because the $m$th initial conditions are defined by orthogonal projection in $L^2$ onto the subspace spanned by the first $m$ basis elements.

\begin{remark}
    If we reduce to the simpler case $\theta_b=0$ (or $\theta_b$ is constant) then we can derive that the total energy increases at most linearly.
    First, we consider \cref{eq:temperature-galerkin}, and we take $\tau_m$ as the test function
    \[
    \frac{d}{dt}\frac{1}{2}\norm{\tau_m}_{L^2(\outcore)}^2+\kappa\norm{\nabla\tau_m}_{L^2(\outcore)}^2\leq \kappa\norm{f_b}_{H^{-1/2}(\partial \incore)}\norm{\tau_m}_{H^{1/2}(\partial \incore)}+\norm{f}_{(\bb T^1)'}\norm{\tau_m}_{H^1(\outcore)}.
    \]
    Now we apply trace, Poincar\'{e}'s, and Young's inequalities to deduce the following form
    \[
    \frac{d}{dt}\frac{1}{2}\norm{\tau_m}_{L^2(\outcore)}^2 + \kappa \norm{\nabla\tau_m}_{L^2(\outcore)}^2 \leq C_1 \norm{f_b}^2_{H^{-1/2}(\partial \incore)} + \frac{\kappa}{4} \norm{\nabla \tau_m}_{L^2(\outcore)}^2 + C_2 \norm{f}_{(\bb T)'}^2 + \frac{\kappa}{4} \norm{\nabla \tau_m}_{L^2(\outcore)}^2.
    \]
    Finally, we absorb $\frac{\kappa}{2}\norm{\nabla \tau_m}_{L^2}^2$ in the left hand side and apply Poincar\'{e} again,
    \[
    \frac{d}{dt}\norm{\tau_m}_{L^2(\outcore)}^2 +\norm{\tau_m}_{L^2(\outcore)}^2\lesssim_{\kappa} \norm{f_b}_{H^{-1/2}(\partial \incore)}^2+\norm{f}_{(\bb T^1)'}^2.
    \]
    The general form of this differential inequality is $\frac{d}{dt}x+x\leq C$. One may use integrating factors to deduce that $\norm{\tau_m}_{L^2}$ is bounded uniformly in time. 
    Using that $\norm{\tau_m}_{L^2}\lesssim 1$, as well as the fact that $b(u_m,\theta^c,\tau_m) = 0$ (the derivative falls on the function $\theta^c$, which is constant),
    we can improve \cref{eq:galerkin-energyest-simple} to 
    \begin{equation}
        \frac{d}{dt} F_m + G_m
        \lesssim_{\rho ,\kappa,g,\theta_b,f_b, f}
        \norm{u_m}_{L^2(\outcore)} + \norm{\nabla \tau_m}_{L^2(\outcore)} .
    \end{equation}
    Applying Young's inequality, Poincar\'{e}'s inequality, and \Cref{lem:poincare-korn} now leads to
    \[
        \frac{d}{dt} F_m + \frac{1}{2}G_m
        \lesssim_{\rho ,\nu,\kappa,g,\theta_b,f_b, f} 1 .
    \]
   in~\eqref{eq:galerkin-energyest-simple} which implies a linear bound on growth in time of the total energy $F_m$.
\end{remark}

The Galerkin energy estimate \cref{eq:galerkin-energyest-crude} demonstrates that the energy of the \gls{ode} solutions are uniformly bounded in $m$, so the sequence of approximations $\{(B_m,(u,\omega)_m,\tau_m)\}$ exist on the same interval $[0,T^\ast]$. Moreover, since the energy is finite at all finite times, we derive global solutions to each approximating \gls{ode}, i.e., $(B_m,(u,\omega)_m,\tau_m)$ exist on $[0,\infty)$.

\subsubsection{Convergence to a weak solution}
The next step is to derive a limit solution from our sequence of approximate solutions. For any $T>0$ and each $m>0$, we have solutions $(B_m, (u_m,\omega_m), \tau_m)$ to the approximating \gls{ode}. Moreover, $B_m, u_m,$ and $\tau_m$ are bounded in $L^\infty_tL^2_x$, $\tcurl\mu^{-1} B_m, D^Su_m, \nabla \tau_m$ in  $L^2_tL^2_x$, while $\omega_m$ is bounded in $L_t^\infty\R^3_x$, all uniformly in $m$. Applying Banach-Alaoglu and the equivalence of weak and weak-* topologies for reflexive Banach spaces, we deduce that on a subsequence (not relabeled)
\begin{align*}
    B_m,u_m,\tau_m &\xrightharpoonup[L^\infty_tL^2_x]{\ast} B,u,\tau 
    \\
    \tcurl \mu^{-1} B_m,D^S u_m,\nabla \tau_m &\xrightharpoonup[L^2_t L^2_x]{} \tcurl \mu^{-1} B,D^S u,\nabla \tau 
    \\
    \omega_m &\xrightharpoonup[L^\infty_t]{\ast} \omega.
\end{align*}
In fact, using that $L^\infty_tL^2_x\subset L^2_tL^2_x$ (for finite times) and using the Poincar\'{e} inequality and embeddings from \Cref{lem:curl-norm,lem:poincare-korn}, we have that $B_m, u_m,$ and $\tau_m$ are uniformly bounded in $L^2_t\bb B^1, L^2_t\bb U^1,L^2_t\bb T^1$ respectively, giving the additional weak convergence
\[B_m,u_m,\tau_m \xrightharpoonup[L^2_t\bb X]{} B,u,\tau\]
where $\bb X$ is the appropriate space.

We now show that $(B,(u,\omega),\tau)$ satisfy \Cref{def:weak-sol}. Throughout the rest of this section, we will take $\chi\in \cc C_0^1([0,T))$ to be an arbitrary smooth temporal function.

Let's begin by showing that \Cref{eq:weak-magnetic} is satisfied. 
We will assume our test functions are tensors of a spatial function and a smooth temporal function $\chi$.
Specifically, multiply \cref{eq:magnetic-galerkin} by $\chi(t)$. Then, for any $j\leq m$
we have 
\[
\int_0^T \langle \partial_t B_m, \chi \mu^{-1} A_j\rangle_{\R^3}  +\chi\langle \sigma^{-1}\tcurl \mu^{-1} B_m, \tcurl \mu^{-1}A_j\rangle_\core - \chi\langle u_m\times B_m, \tcurl \mu^{-1} A_j\rangle_\core\d t  = 0.
\]
Integrating by parts in time yields
\begin{multline*}
    \underbrace{\langle B_m(0), \chi(0)\mu^{-1}A_j\rangle_{\R^3}}_{(\text{a})}-\int_0^T \chi'\underbrace{\langle B_m,  \mu^{-1} A_j\rangle_{\R^3}}_{(\text{b})}  \d t
    \\
    +\int_0^T \chi\underbrace{\langle \sigma^{-1}\tcurl \mu^{-1} B_m, \tcurl \mu^{-1}A_j\rangle_\core}_{(\text{c})}- \chi\underbrace{\langle u_m\times B_m, \tcurl \mu^{-1}A_j\rangle_\core}_{(\text{d})}\d t  = 0.
\end{multline*}
As we send $m\to\infty$, it is clear that the linear terms (a), (b), and (c) converge to
\[
\langle B_0, \chi(0)\mu^{-1}A_j\rangle_{\R^3}-\int_0^T \chi'\langle B, \mu^{-1} A_j\rangle_{\R^3}+\chi \langle \sigma^{-1}\tcurl \mu^{-1} B, \tcurl \mu^{-1}A_j\rangle_\core\d t
\]
due to the weak convergence of $B_m$ and $\tcurl \mu^{-1} B_m$ in $L^2_tL^2_x$.

For term (d), we require convergence of $u_m$ or $B_m$ in $L^2_tL^2_x$ (along a further subsequence, again without relabeling).
\begin{lemma}[restate=lem:compactness]
    \label{lem:compactness}
    The sequence $\{u_m\}$ (where $u_m$ in this case are extended to $\incore$ according to \cref{eq:momentum-inner-core}) converges strongly in $L^2_tL^2(\core)$. Similarly, the sequence $\{B_m\}$ converges strongly in $L^2_tL^2(\core)$. 
\end{lemma}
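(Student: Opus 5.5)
Since the Galerkin system is posed on abstract Hilbert spaces, the plan is to apply the Aubin--Lions lemma separately to $\{B_m\}$ and $\{(u,\omega)_m\}$, using the triple $X_1\hookrightarrow\hookrightarrow X_0\hookrightarrow X_{-1}$. For the magnetic field I take $X_1=\bb B^1$, with $X_0$ given by $L^2(\core)$ restrictions, or more conveniently $X_0=\bb B^0$ restricted to a large ball. For the velocity I take $X_1=\bb U^1$ and $X_0=\bb U^0$. In both cases $X_{-1}$ is the dual of a smoother test space, namely $(\bb B^1\cap H^s)'$ and $(\bb U^1\cap H^s)'$ for some $s$ large enough that $H^s\subset W^{1,\infty}$ piecewise; taking $s>5/2$ suffices.

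\textbf{Step 1: compact embedding.} For $(u,\omega)\in\bb U^1$, the extension $u=\omega\times x$ to $\incore$ lies in $H^1_0(\core)$ with norm controlled by \Cref{lem:poincare-korn}, so Rellich gives compactness into $L^2(\core)$. For $B$, \Cref{lem:curl-norm} gives piecewise $H^1$ bounds on $\incore$ and $\outcore$, and Rellich on each bounded piece gives compactness into $L^2(\core)$. This is all we need, since the conclusion concerns only $L^2(\core)$. The uniform bounds in $L^2_t\bb B^1$ and $L^2_t\bb U^1$ are already available from the energy estimate.

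\textbf{Step 2: time derivative bound.} This is the main obstacle, because the Galerkin projection $P_m$ is orthogonal in $\bb B^0$ (respectively $\bb U^0$) but need not be uniformly bounded on $\bb B^1\cap H^s$. To get around this, I would choose the bases in the Galerkin construction to be eigenfunctions of the Riesz map of $\bb B^1$ relative to $\bb B^0$ (and similarly for $\bb U^1$ relative to $\bb U^0$), or more generally of a higher-order space $V_s\subset\bb B^1$ compactly embedded. This makes $P_m$ simultaneously orthogonal in $\bb B^0$ and in $V_s$, hence uniformly bounded on $V_s$.

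With such bases, for $A\in V_s$ we have $\langle\partial_tB_m,A\rangle_{\bb B^0}=\langle\partial_tB_m,P_mA\rangle_{\bb B^0}$, which by \eqref{eq:magnetic-galerkin} is bounded by
\[
|d(B_m,P_mA)|+\norm{u_m}_{L^2}\norm{B_m}_{L^2}\norm{\tcurl\mu^{-1}P_mA}_{L^\infty(\core)} .
\]
Using the energy bounds, this gives $\partial_tB_m$ bounded in $L^2_tV_s'$. For the velocity, the terms of \eqref{eq:momentum-and-angular-galerkin} are handled in the same way: $b(u_m,v,u_m)$ and $b(B_m,v,B_m)$ are controlled by $\norm{\nabla v}_{L^\infty}$ times $L^\infty_tL^2_x$ norms squared, and the remaining terms are linear. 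This yields $\partial_t(u,\omega)_m$ bounded in $L^2_t(\bb U^1\cap H^s)'$, which is even better than the $L^1_t$ bound needed.

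\textbf{Step 3: conclusion.} Aubin--Lions, with $p=2$ for the state and $p\ge1$ for the derivative, gives precompactness in $L^2_tX_0$, and hence strong convergence in $L^2_tL^2(\core)$ along a subsequence. The limits coincide with the weak limits already identified, since strong and weak limits must agree.

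If redefining the basis is undesirable, there is a fallback that avoids duals entirely. One shows equicontinuity in time of $\langle B_m,A_j\rangle$ for each fixed $j$, which is immediate from the ODE since the $A_j$ are fixed and smooth enough. Combined with the uniform $L^2_t$ bound in $\bb B^1$ and the compact embedding, a Lions--Magenes/Simon type interpolation then gives the same conclusion.
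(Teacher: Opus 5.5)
Your route differs from the paper's. The paper keeps arbitrary orthogonal bases and follows Temam (Ch.~3, \S3.2). It extends $(u,\omega)_m$ and $B_m$ by zero outside $[0,T]$, Fourier transforms the Galerkin equations in time (which produces boundary terms at $t=0$ and $t=T$), and tests with $\widehat{(u,\omega)}_m$ and $\widehat B_m$ themselves. Since these lie in the Galerkin spaces, $P_m$ never acts on a test function. The only input is the uniform $L^1_t(\bb U^1)'$ and $L^1_t(\bb B^1)'$ bounds on the right-hand sides, \eqref{eq:frozen-in-flux-est}--\eqref{eq:Lorentz-est}. These give $|\hat t|\,\|\widehat{(u,\omega)}_m\|_{\bb U^0}^2\lesssim\|\widehat{(u,\omega)}_m\|_{\bb U^1}$ and its analogue for $B_m$, hence uniform bounds on $D_t^\gamma$ for $\gamma<1/4$, and then the fractional compactness theorem applies with $X=X_1=L^2(\core)$.

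You correctly identified the lack of uniform bounds on $P_m$ as what blocks a naive Aubin--Lions argument. The paper's device avoids it for every basis, whereas your main route changes the Galerkin construction. That is acceptable for the existence theorem, but it does not prove the lemma for the paper's arbitrary bases. Your fallback is basis-independent and does work once filled in. For $m\ge j$, \eqref{eq:magnetic-galerkin} bounds $\langle B_m,A_j\rangle_{\bb B^0}$ in $W^{1,1}(0,T)$ uniformly. The Ehrling-type inequality $\|B\|_{L^2(\core)}\le\epsilon\|B\|_{\bb B^1}+C_\epsilon\sum_j2^{-j}|\langle B,A_j\rangle_{\bb B^0}|/\|A_j\|_{\bb B^0}$ follows by contradiction from weak compactness in $\bb B^1$, Rellich on $\incore$ and $\outcore$, and density of the span of the $A_j$ in $\bb B^0$.

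The main route, as written, contains a wrong step. If the $A_j$ are eigenfunctions of the Riesz map of $\bb B^1$ relative to $\bb B^0$, then $P_m$ is the $\bb B^1$-orthogonal projection, so $\|P_mA\|_{\bb B^1}\le\|A\|_{\bb B^1}$. Nothing, however, controls $P_m$ on $\bb B^1\cap H^s$ or $\|\tcurl\mu^{-1}P_mA\|_{L^\infty(\core)}$. That would require identifying domains of powers of the transmission operator with piecewise $H^s$ spaces, i.e.\ higher regularity for the transmission problem including the exterior. Using an eigenbasis of $V_s$ instead would require showing $V_s$ is dense in $\bb B^1$, so that the Galerkin spaces still exhaust the test space, and you do not address this.

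The $H^s$ detour is unnecessary. With the $\bb B^1$-spectral basis, \eqref{eq:frozen-in-flux-est} gives $|\langle u_m\times B_m,\tcurl\mu^{-1}P_mA\rangle_\core|\lesssim\|u_m\|_{H^1}\|B_m\|_{\bb B^1}\|A\|_{\bb B^1}$. So $\partial_tB_m$ is bounded in $L^1_t(\bb B^1)'$, and likewise $\partial_t(u,\omega)_m$ in $L^1_t(\bb U^1)'$ via \eqref{eq:convection-est} and \eqref{eq:Lorentz-est}. Simon's version of Aubin--Lions with $\bb B^1\subset\bb B^0\subset(\bb B^1)'$ then applies.

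Two facts still need proof. First, the spectral basis requires $\bb B^1\hookrightarrow\bb B^0$ to be compact in the full $L^2(\R^3)$ norm, and your Step 1 only gives compactness into $L^2(\core)$. Second, embedding your middle space of $L^2(\core)$-restrictions into $(\bb B^1)'$ only makes sense once $B|_\core$ is known to determine $B$ on $\exterior$, since $\langle B,A\rangle_{\bb B^0}$ sees the exterior. Both follow because on $\bb B^1$ one has $B_e=\nabla\phi$ with $\phi$ harmonic and decaying. Then $\|B_e\|_{L^2(\exterior)}^2=-\int_{\partial\core}\phi\,(B\cdot n)\d S\lesssim\|\phi\|_{H^{1/2}(\partial\core)}\|B\cdot n\|_{H^{-1/2}(\partial\core)}\lesssim\|B_e\|_{L^2(\exterior)}\|B\|_{L^2(\core)}$, where the last step uses that $B$ is divergence-free on all of $\core$. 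Hence $\|B\|_{\bb B^0}\sim\|B\|_{L^2(\core)}$, and Rellich on $\incore$, $\outcore$ gives the needed compactness.
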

The proof follows standard techniques using fractional derivatives and fractional Aubin-Lions lemma. For this reason, the proof is included in the appendix (\Cref{sec:appendix-compactness}).

We compute 
\begin{multline*}
    \int_0^T \int_\core u_m\times B_m \cdot \chi\tcurl \mu^{-1}A_j\d x \d t =
    \\
    \int_0^T\int_\core  (u_m-u)\times B_m\cdot \chi\tcurl \mu^{-1} A_j\d x\d t+\int_0^T\int_\core  (\chi\tcurl \mu^{-1}A_j\times u)\cdot B_m\d x\d t.  
\end{multline*}
Observe that $\chi\tcurl\mu^{-1} A_j\times u\in L^2_t (\bb B^1)'$ by picking some $\psi\in L^2_t\bb B^1$ and estimating
\begin{align*}
    \abs*{\int_0^T\int_\core (\chi\tcurl\mu^{-1} A_j\times u)\cdot \psi\d x\d t}
    &\lesssim \norm{\chi \tcurl \mu^{-1} A_j}_{L^\infty_tL^2_x}\norm{u}_{L^2_tH^1_x}\norm{\psi}_{L^2_t(H^1(\incore)\times H^1(\outcore))}
    \\
    &\lesssim \norm{\psi}_{L^2_t\bb B^1}
\end{align*}
where we used the embedding \Cref{lem:curl-norm}.
Due to the weak convergence $B_m\xrightharpoonup[L^2_t\bb B^1]{}B$ we have 
\[
\int_0^T\int_\core  (\chi\tcurl \mu^{-1}A_j\times u)\cdot B_m\d x\d t\to \int_0^T\int_\core  (\chi\tcurl \mu^{-1} A_j\times u)\cdot B\d x\d t.
\]
On the other hand, we can estimate 
\[
\abs*{\int_0^T\int_\core (u_m-u)\times B_m\cdot \chi\tcurl \mu^{-1} A_j\d x\d t}
\leq
\int_0^T\norm{u_m-u}_{L^4(\core)}\norm{B_m}_{L^4(\core)}\norm{\chi\tcurl\mu^{-1} A_j}_{L^2(\core)}\d t
\]
to which we can apply Ladyzhenskaya's inequality and strong convergence of $u_m$ in $L^2_tL^2(\core)$ to deduce
\begin{multline*}
    \int_0^T \norm{u_m-u}_{L^2(\core)}^{1-d/4} \norm{u-u_m}_{H^1(\core)}^{d/4} \norm{B_m}_{H^1(\incore)\times H^1(\outcore)} \norm{\chi\tcurl \mu^{-1} A_j}_{L^2(\core)}\d t
    \\
    \leq 
    \norm{u_m-u}_{L^2_tL^2_x}^{1-d/4}\norm{u-u_m}_{L^2_tH^1_x}^{d/4}\norm{B_m}_{L^2_t\bb B ^1}\norm{\chi\tcurl \mu^{-1} A_j}_{L^\infty_t L^2(\core)}
    \lesssim \norm{u_m-u}_{L^2_tL^2_x}^{1-d/4}\to 0.
\end{multline*}

We can immediately generalize to the case where $A$ is a finite linear combination of $A_j$
\[
\langle B, \chi(0)\mu^{-1} A\rangle_{\R^3}-\int_0^T {\langle B, \chi' A\rangle_{\R^3}}  +\chi\langle \sigma^{-1}\tcurl \mu^{-1} B, \tcurl\mu ^{-1} A\rangle_\core  - \chi{\langle u\times B, \tcurl \mu^{-1} A\rangle_\core}\d t  = 0
\]
and then general $A\in \bb B^1$ follows from density and boundedness of each term above as a functional on $\bb B^1$. This completes the proof that \cref{eq:weak-magnetic} is satisfied by $(B,(u,\omega),\tau)$ for any test function in $\cc C^1_0([0,T);\bb B^1)$.

Next, let us check that \cref{eq:weak-momentum} is satisfied by our limit. This time we multiply \cref{eq:momentum-and-angular-galerkin} with $\chi(t)$ for arbitrary $\chi\in \cc C_0^1([0,T))$.
\begin{multline*}
    \int_0^T \langle \partial_t u_m, \chi\rho  v_j\rangle_\outcore + \chi J\frac{d}{dt}\omega_m\cdot   \alpha_j+ \chi \rho b(u_m,u_m,v_j) +2\rho \nu  a^S( u_m,v_j)  \d t
    \\
    =  
    \int_0^T -\mu_o^{-1}\chi b(B_m,v_j,B_m) + \chi \langle \theta_m g \radial,\rho v_j\rangle_\outcore -\chi\langle Le_3\times u_m, \rho v_j\rangle _\outcore 
    -\chi J( Le_3\times \omega_m)\cdot \alpha_j  \d t.
\end{multline*}
As before, we integrate the time derivative by parts. All the linear terms converge using weak convergence of $u$ and weak-star convergence of $\omega$. The nonlinear terms converge according to the same argument as the nonlinear terms for the magnetic field equation, this time requiring strong convergence of $u_m$ and $B_m$ in $L^2_tL^2(\core )$.
For example,
\[
\int_0^T -\mu_o^{-1}\chi b(B_m,v_j,B_m)\d t = \int_0^T -\mu_o^{-1}\chi b(B_m-B,v_j,B_m)\d t + \int_0^T -\mu_o^{-1}\chi b(B,v_j,B_m)\d t
\]
so that we estimate 
\begin{multline*}
    \abs*{\int_0^T -\mu_o^{-1}\chi b(B_m-B,v_j,B_m)\d t}\leq\int_0^t \norm{B_m-B}_{L^4(\outcore)}\norm{\chi \nabla v_j}_{L^2(\outcore)}\norm{B_m}_{L^4(\outcore)}\d t
    \\
    \leq \norm{B_m-B}^{1-d/4}_{L^2_tL^2(\outcore)}\norm{B-B_m}^{d/4}_{L^2_tH^1(\outcore)}\norm{B_m}_{L^2_t\bb B^1}\norm{\chi \nabla v_j}_{L^\infty_tL^2(\outcore)}\leq C\norm{B_m-B}^{1-d/4}_{L^2_tL^2(\outcore)}\to 0.
\end{multline*}

The same density argument works to show that the equation then holds for any test function in $\cc C^1_0([0,T);\bb U^1)$. We conclude that \cref{eq:weak-momentum} is also satisfied by $(B,(u,\omega), \tau)$.

Finally, we test \cref{eq:temperature-galerkin} against $\chi \varphi_j$.
\begin{multline*}
    \int_0^T\langle \partial_t \tau_m, \chi \varphi_j\rangle_\outcore  +\chi b(u_m,\tau_m, \varphi_j)+\kappa \chi a(\tau_m, \varphi_j)\d t 
    \\
    = \int_0^T-\chi b(u_m,\theta^c,\varphi_j) -\kappa \chi a(\theta^c,\varphi_j)-\chi \kappa\langle f_b, \varphi_j\rangle_{H^{-1/2}(\partial\incore), H^{1/2}(\partial \incore)} + \chi\langle f, \varphi_j\rangle_{(\bb T^1)', \bb T^1}\d t.
\end{multline*}
Again, integration by parts, convergence of linear and nonlinear terms, and generalizing to any test function in $\cc C^1_0([0,T);\bb T^1)$ all work as before. 
We conclude that $(B,(u,\omega),\tau)$ satisfies \cref{eq:weak-temperature}.
\begin{remark}
    Although $\tau_m$ does convergence strongly in $L^2_tL^2(\outcore)$, we do not actually need to use that convergence; only the strong convergence of $u_m$ is needed.
    To see this, observe that 
    \[
    \int_0^T\chi b(u_m,\tau_m,\varphi_j )\d t = -\int_0^T \chi b(u_m,\varphi_j,\tau_m)\d t = -\int_0^T \chi b(u,\varphi_j,\tau_m) -\chi b(u-u_m,\varphi_j,\tau_m)\d t.
    \]
    The first term converges due to weak convergence of $\tau_m$ in $L^2_tH^1(\outcore)$ (and that $u\cdot\nabla \varphi_j\in L^2_t(\bb T^1)'$). We can estimate the second term by 
    \begin{equation}\label{eq:nonlin-ladyzhenskaya}
        |b(u-u_m,\varphi_j, \tau_m)|\leq \norm{u-u_m}_{L^2(\outcore)}^{d/4} \norm{u-u_m}_{H^1(\outcore)}^{1-d/4} \norm{\nabla\varphi_j}_{L^2(\outcore)} \norm{\tau_m}_{H^1(\outcore)}
    \end{equation}
    which, after integrating in time, converges to 0 due to strong convergence of $u$ in $L^2_tL^2_x$ and boundedness of the other terms.
\end{remark}

We have proven the following lemma.
\begin{lemma}\label{lem:existence-weakeqs}
    Suppose $B_0\in \bb B^0$, $(u_0,\omega_0)\in \bb U^0$, $\tau_0\in \bb T^0$, $\theta_b\in H^{1/2}(\partial \core )$, $f_b\in H^{-1/2}(\partial \core)$, and $f\in L^2_tH^{-1}(\outcore)$. Then there exists $(B,(u,\omega),\tau)$ (in the function space specified in \Cref{def:weak-sol}) that satisfy \cref{eq:weak-magnetic,eq:weak-momentum,eq:weak-temperature}.
\end{lemma}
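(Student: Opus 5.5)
This lemma collects what the preceding convergence arguments have shown, so the proof assembles those steps. First, construct the Galerkin approximations \cref{eqs:galerkin} with initial data $P_m(B_0,(u,\omega)_0,\tau_0)$ in the finite-dimensional spaces $\mathcal B_m,\mathcal U_m,\mathcal T_m$. These exist locally by the Cauchy--Lipschitz theorem. The energy identity \cref{eq:galerkin-energy-exact} holds because of three facts: the antisymmetry of $b(u_m,\cdot,\cdot)$, the vanishing of the Coriolis terms, and the cancellation of the Lorentz and induction terms. The last of these uses the transmission conditions built into $\bb B^1$ and the rigid-body identity $\int_\incore(\omega_m\times B_m)\cdot B_m=0$. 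Together with Young's inequality, \Cref{lem:poincare-korn}, the trace inequality and Gr\"onwall, this gives \cref{eq:galerkin-energyest-crude}. Hence $F_m$ is bounded and $\int_0^T G_m$ is bounded, uniformly in $m$, and the approximate solutions exist on all of $[0,T]$. Note that $f\in L^2_tH^{-1}(\outcore)$ pairs with $\bb T^1$, since $\bb T^1\subset H^1(\outcore)$, so $f\in L^2_t(\bb T^1)'$ and the bounds quoted earlier apply.

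Second, upgrade the bounds to the energy spaces. By \Cref{lem:curl-norm} (via \Cref{lem:biot-savart-with-refraction}), $\norm{B_m}_{\bb B^1}\sim\norm{\tcurl\mu^{-1}B_m}_{L^2(\core)}$. By \Cref{lem:poincare-korn}, $\norm{u_m}_{H^1}\lesssim\norm{D^S u_m}_{L^2}$, and Poincar\'e handles $\tau_m$. So the sequence is bounded in $L^2_t(\bb B^1\times\bb U^1\times\bb T^1)\cap L^\infty_t(\bb B^0\times\bb U^0\times\bb T^0)$. Banach--Alaoglu then gives weak and weak-$*$ limits $(B,(u,\omega),\tau)$ in these spaces. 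Closed subspaces are weakly closed, so the limit inherits the constraints: divergence-free, the transmission conditions, $\tcurl B=0$ in $\exterior$, and $u=\omega\times x$ on $\partial\incore$. This places the limit in the function space of \Cref{def:weak-sol}.

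Third, pass to the limit in the equations. Test \cref{eqs:galerkin} against $\chi(t)A_j$, $\chi(t)(v,\alpha)_j$ and $\chi(t)\varphi_j$, then integrate by parts in time. The initial-data terms converge because $P_m$ of the data converges in $L^2$. The linear terms converge by weak convergence. The nonlinear terms are $\langle u_m\times B_m,\tcurl\mu^{-1}A_j\rangle$, $b(u_m,v_j,u_m)$, $b(B_m,v_j,B_m)$ and $b(u_m,\varphi_j,\tau_m)$. For each, split as (limit in one slot) $+$ (difference). The first piece converges by weak convergence in $L^2_t\bb X$, because it is a bounded functional on that space. The difference piece is controlled by Ladyzhenskaya's inequality, which bounds it by $\norm{u_m-u}_{L^2_tL^2}^{1-d/4}$ or $\norm{B_m-B}_{L^2_tL^2}^{1-d/4}$ times uniformly bounded quantities.

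The main obstacle is the strong $L^2_tL^2(\core)$ compactness of $u_m$ and $B_m$, which is \Cref{lem:compactness}. I would prove it as in the appendix: obtain a uniform bound on fractional time derivatives, or on time translates, of the Galerkin solutions, measured in the dual norms of the finite-dimensional spaces. Then apply the fractional Aubin--Lions lemma, using compactness of $H^1(\incore)\times H^1(\outcore)\hookrightarrow L^2(\core)$. The unbounded exterior is not needed, since $B$ enters the nonlinearities only on $\core$. Finally, having established the identities for finite combinations of basis elements times $\chi$, extend them to all test functions in $\cc C^1_0([0,T);\bb B^1\times\bb U^1\times\bb T^1)$. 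This follows by density, since each term is a bounded functional on $L^2_t\bb X$.
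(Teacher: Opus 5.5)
Your proposal is correct and follows the paper's argument essentially step for step: the Galerkin scheme, the energy identity via the three cancellations and Gr\"onwall, the upgrade of the bounds through \Cref{lem:curl-norm,lem:poincare-korn}, the splitting of the nonlinear terms closed by Ladyzhenskaya's inequality and the fractional Aubin--Lions compactness of \Cref{lem:compactness}, and a final density argument. Two minor precisions. First, your reduction of $f\in L^2_tH^{-1}(\outcore)$ to $f\in L^2_t(\bb T^1)'$ is valid only if $H^{-1}(\outcore)$ means $(H^1(\outcore))'$; with the convention $H^{-1}=(H^1_0(\outcore))'$ such an $f$ does not act canonically on $\bb T^1\supsetneq H^1_0(\outcore)$, so one should simply assume $f\in L^2_t(\bb T^1)'$ as in \Cref{thm:existence}. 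Second, in the final density step, nonlinear terms such as $\int_0^T\langle u\times B,\tcurl\mu^{-1}A\rangle_\core\d t$ are bounded by the $L^\infty_t\bb B^1$ norm of the test function, not its $L^2_t\bb B^1$ norm; this still suffices because finite sums $\sum_k\chi_k(t)A_k$ are dense in $\cc C^1_0([0,T);\bb B^1)$ in the $C^1$ topology.
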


\subsubsection{Initial condition}
Now that we have existence, we need to sort out the remaining properties of a weak solution (\Cref{def:weak-sol}). We start with the initial condition.
\begin{lemma}\label{lem:IC-limit}
    Let $(B, (u,\omega), \tau)$ be the functions from \Cref{lem:existence-weakeqs}. Then they satisfy \cref{eq:initial-condition}.
\end{lemma}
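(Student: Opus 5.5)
The plan is the standard Lions--Magenes/Temam argument. The weak formulation is first used to recover a distributional time derivative in a dual space, which gives continuity of the solution in time. Then the weak formulation is compared with its time-integrated form, so that the boundary term at $t=0$ identifies the initial value.

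\textbf{Step 1 (time regularity).} From \cref{eq:weak-magnetic} with test functions $\chi(t)A$, $\chi\in\cc C^\infty_0((0,T))$, the distributional derivative of $t\mapsto\langle B(t),A\rangle_{\bb B^0}$ equals $-d(B,A)+\langle u\times B,\tcurl\mu^{-1}A\rangle_\core$. By \Cref{lem:curl-norm}, \Cref{lem:poincare-korn}, Ladyzhenskaya and H\"older, this is bounded by
\[
\p*{\norm{B}_{\bb B^1}+\norm{u}_{L^2}^{1-d/4}\norm{u}_{H^1}^{d/4}\norm{B}_{L^2}^{1-d/4}\norm{B}_{\bb B^1}^{d/4}}\norm{A}_{\bb B^1},
\]
which lies in $L^{4/3}(0,T)$ uniformly over $\norm{A}_{\bb B^1}\le1$. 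Hence $\partial_tB\in L^{4/3}_t(\bb B^1)'$, and identically $\partial_t(u,\omega)\in L^{4/3}_t(\bb U^1)'$ and $\partial_t\tau\in L^{4/3}_t(\bb T^1)'$. It follows that $B\in W^{1,4/3}(0,T;(\bb B^1)')\subset C([0,T];(\bb B^1)')$, and similarly for the other two components. Combined with $B\in L^\infty_t\bb B^0$, the standard lemma (Temam, Ch.~III, Lemma~1.4) gives weak continuity with values in $\bb B^0$. This also supplies item (3) of \Cref{def:weak-sol}, so $B(0)$ makes sense.

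\textbf{Step 2 (identification).} Take $A\in\bb B^1$ and $\chi\in\cc C^1_0([0,T))$ with $\chi(0)=1$. Since $B\in W^{1,4/3}(0,T;(\bb B^1)')$, integrating by parts in time gives
\[
\int_0^T\langle B,\chi'A\rangle_{\bb B^0}\d t=-\langle B(0),A\rangle_{\bb B^0}-\int_0^T\chi\langle\partial_tB,A\rangle\d t.
\]
Here we use that the pairing $\langle\cdot,A\rangle$ extends $\langle\cdot,A\rangle_{\bb B^0}$, as $\bb B^1\subset\bb B^0\subset(\bb B^1)'$ is a Gelfand triple. Substitute this and the Step~1 expression for $\partial_tB$ into \cref{eq:weak-magnetic}. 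All interior terms cancel, leaving $\langle B(0)-B_0,A\rangle_{\bb B^0}=0$ for every $A\in\bb B^1$. Since $\bb B^1$ is dense in $\bb B^0$ by definition, $B(0)=B_0$. The same computation in $\bb U^0$ with the $J\omega\cdot\alpha$ part of the inner product gives $(u,\omega)(0)=(u_0,\omega_0)$, and in $\bb T^0$ gives $\tau(0)=\tau_0$.

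\textbf{Main obstacle.} The delicate point is that \cref{eq:weak-magnetic} contains $\langle B_0,A(0)\rangle$ only because the Galerkin data $P_mB_0\to B_0$. That convergence was already used when passing to the limit in term (a) of the proof of \Cref{lem:existence-weakeqs}, so it is available here. Beyond that, the care needed is in justifying the time integration by parts with only $W^{1,4/3}$ regularity valued in a dual space. I would do this either by mollifying in time or by invoking the abstract lemma for $W^{1,p}(0,T;X')$ with $X\hookrightarrow H\hookrightarrow X'$. The one structural input to check is that each space is dense in its $L^2$-type closure, which holds by construction of $\bb B^0$, $\bb U^0$ and $\bb T^0$.
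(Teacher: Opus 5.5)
Your proposal is correct and follows essentially the paper's route. The paper also identifies $\partial_t B$ with $\cc N(u,B)-\cc D B$ and shows it lies in $L^1_t(\bb B^1)'$; your $L^{4/3}_t$ bound via Ladyzhenskaya is sharper but not needed. It then uses Temam's Lemma III.1.1 for continuity into $(\bb B^1)'$ and tests \cref{eq:weak-magnetic} with $\chi A$, $\chi(0)=1$, to get $B(0)=B_0$, and argues the same way for $(u,\omega)$ and $\tau$. Your density step for $\bb B^1\subset\bb B^0$ is exactly what justifies the paper's passage from equality in $(\bb B^1)'$ to equality in $L^2$, and the weak continuity you fold into Step 1 is what the paper proves separately in \Cref{lem:weak-continuity}.
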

\begin{proof}
We know $(B, (u,\omega), \tau)$ satisfy \cref{eq:weak-magnetic,eq:weak-momentum,eq:weak-temperature}. We will only explicitly show how to argue for the convergence of $B$ and $(u,\omega)$, but the same idea also works for $\tau$. We can re-package the magnetic field equation \cref{eq:weak-magnetic} as an equation of the form
\begin{equation}
    \label{eq:distribution-time-magnetic}
    \frac{\d}{\d t}\langle B, \mu^{-1} A\rangle_{L^2(\bb R^3)} = \frac{\d}{\d t}\langle B, \mu^{-1} A\rangle_{(\bb B^1)',\bb B^1}  = \langle f^B,  A\rangle_{(\bb B^1)',\bb B^1}
\end{equation}
(in the distributional sense in time) for any $A\in \bb B^1$. We define $f^B$ by 
\[
f^B = \cc{N}(u,B)-\cc{D}B
\]
where $\cc{N}:H^1(\core)\times \bb B^1\to (\bb B^1)'$ is defined by $\langle \cc{N}(u,B),A\rangle = \int_\core (u\times B)(\tcurl \mu^{-1} A)\d x$
and $\cc{D}:\bb B^1\to (\bb B^1)'$ is defined by $\langle \cc{D}B,A\rangle = \int_\core \sigma^{-1}\tcurl \mu^{-1} B\cdot \tcurl \mu^{-1} A\d x$. 
We verify that $f^B\in L^1_t(\bb B^1)'$.
First, we estimate the frozen-in flux term.
\begin{multline}\label{eq:frozen-in-flux-est}
    \| \mathcal{N}(B,u) \|_{ (\bb B^1)' } 
    =
    \sup_{\| A \|_{ \bb B^1}=1} \left\lvert \int_{\core} (u\times B)\cdot \operatorname{curl}\mu^{-1}A \, \mathrm{d}x  \right\rvert 
    \\
    \leq\sup_{\| A \|_{ \bb B^1}=1 } \| u \|_{ L^4(\core) } \| B \|_{ L^4(\core) } \| A \|_{ H^1(\core) }
    \lesssim \| u \|_{ H^1(\core) } \| B \|_{ H^1(\core) }.
\end{multline}
Next we estimate the magnetic diffusion term.
\begin{multline}\label{eq:magnetic-diffusion-est}
    \| \mathcal{D}B \|_{ (\bb B^1)' } =
    \sup_{\| A \|_{ \bb B^1}=1 }\left\lvert \int_{\core} \operatorname{curl}\mu^{-1}B\cdot \operatorname{curl}\mu^{-1}A \, \mathrm{d}x  \right\rvert 
    \\
    \leq \sup_{\| A \|_{ \bb B^1}=1 }
    \| B \|_{ H^1(\core) }\| A \|_{ H^1(\core) } 
    \leq \| B \|_{ H^1(\core) }.
\end{multline}
We use control of $u,B$ in $L^2_{t}H^1(\core)$ to deduce
\begin{equation*}
\int_0^T \norm{\cc N(B,u)}_{(\bb B^1)'}+ \norm{\cc D B}_{(\bb B^1)'}\d t <\infty.
\end{equation*}
By Lemma 1.1 in~\cite[Chapter 3]{temam_navierstokes_2001},
this means that $B$ is continuous with values in $(\bb B^1)'$. Therefore it makes sense to test \cref{eq:distribution-time-magnetic} with some $\chi\in \cc D([0,T))$ such that $\chi = 1$ at $t=0$, yielding 
\[
    \int_0^T \langle B, \mu^{-1}A\rangle_{(\bb B^1)',\bb B^1}\chi'(t) \d t - \langle B(0),\mu^{-1}A\rangle_{(\bb B^1)',\bb B^1}
    \\
    =-\int_0^T \langle f^B, A \rangle_{(\bb B^1)',\bb B^1} \chi(t) \d t .
\]
By comparison with \cref{eq:weak-magnetic} (which is satisfied by the weak solution $B$) we get that 
\[\langle B(0),\mu^{-1}A\rangle_{(\bb B^1)',\bb B^1} = \langle B_0,\mu^{-1}A\rangle_{(\bb B^1)',\bb B^1}\]
for every $A\in (\bb B^1)'$, from which we can conclude that $B(0) = B_0$ in $(\bb B^1)'$. Since $B_0\in L^2$, this is also equality in $L^2$. 

Now we detail the proof for $(u,\omega)$. We repackage the velocity equation \cref{eq:weak-momentum} as 
\begin{equation}\label{eq:distribution-time-momentum}
    \frac{\d}{\d t} \langle (u,\omega),( v, \alpha)\rangle_{\bb U^0} = \frac{\d}{\d t} \langle (u,\omega),(v, \alpha)\rangle_{(\bb U^1)',\bb U^1} = \langle f^u, (v, \alpha)\rangle_{(\bb U^1)',\bb U^1}
\end{equation}
for any $(v,\alpha)\in \bb U^1$. We define $f^u$ by 
\begin{multline}
    \langle f^u,(v,\alpha)\rangle_{(\bb U^1)',\bb U^1} = \langle \rho \theta g\widehat{x}-\rho Le_{3}\times u, v\rangle_{L^2(\outcore)} 
    - JL(e_3\times \omega)\cdot\alpha
    \\
    -\langle 2\rho \nu \mathcal{A}^Su -\rho \mathcal{B} u
    + \mu_o^{-1}\mathcal{B} B, v\rangle_{(H^1(\outcore))',H^1(\outcore)}.
\end{multline}
Here, $\cc{B}:H^1(\outcore)\to (H^1(\outcore))'$ is defined by $\langle \cc{B}u,v\rangle = b(u,v,u)$, and $\cc{A}^S:H^1(\outcore) \to (H^1(\outcore))'$ is defined by $\langle \cc{A}^Su,v\rangle  = a^S(u,v)$.
To verify that $f^u\in L^1_t(\bb U^1)'$, we need to make some estimates. We begin with
\begin{multline}\label{eq:momentum-RHS-est}
    \norm{{f}^u}_{(\mathbb U^1)'} = \sup_{\norm{(v,\alpha)}_{\bb U^1} = 1} \abs*{\langle {f}^u ,(v,\alpha)\rangle_{(\bb U^1)',\bb U^1} } 
    \\
    \leq \sup_{\norm{(v,\alpha)}_{\bb U^1} = 1}\p*{\norm{\rho^{1/2}\theta  g\radial}_{L^2(\outcore)} + \norm{\rho^{1/2} Le_3\times u}_{L^2(\outcore)}}\norm{\rho^{1/2}v}_{L^2(\outcore)}+|J^{1/2}Le_3\times \omega |\cdot|J^{1/2}\alpha|
    \\
    +\p*{\norm{2\rho \nu\cc A^S u}_{(H^1(\outcore))'} +\norm{\rho \cc B u}_{(H^1(\outcore))'} + \norm{\mu_o^{-1} \cc B B_m}_{(H^1(\outcore))'}}\norm{\nabla v}_{L^2(\outcore)}
    \\
    \lesssim \norm{\rho^{1/2}\theta  g\radial}_{L^2(\outcore)} + \norm{\rho^{1/2} Le_3\times u}_{L^2(\outcore)}+|J^{1/2}Le_3\times u |
    \\
    +\norm{2\rho \nu\cc A^S u}_{(H^1(\outcore))'} +\norm{\rho \cc B u}_{(H^1(\outcore))'} + \norm{\mu_o^{-1} \cc B B_m}_{(H^1(\outcore))'}.
\end{multline}
The terms $\norm{\rho^{1/2}\theta  g\radial}_{L^2(\outcore)}, \norm{\rho^{1/2} Le_3\times u}_{L^2(\outcore)},|J^{1/2}Le_3\times \omega |$ are all controlled in $L^\infty_t$. For the viscosity term, compute 
\begin{multline}\label{eq:viscosity-est}
    \| 2\rho\nu \cc{A}^Su \|_{ (H^1(\outcore))' } = \sup_{\| v \|_{H^1(\outcore)  }=1 }\left\lvert \int_{\outcore} 2\rho\nu D^Su:D^Sv \, \mathrm{d}x  \right\rvert 
    \\
    \leq \sup_{\| v \|_{H^1(\outcore)}=1 } C\| \nabla u\|_{ L^2(\outcore)}\| \nabla v \|_{ L^2(\outcore) }
    \lesssim 
    \| u \|_{ H^1(\outcore) }  .
\end{multline}
For the convection term, 
\begin{multline}\label{eq:convection-est}
    \|  \cc{B} u \|_{ (H^1(\outcore))' }  =\sup_{\| v \|_{H^1(\outcore)  }=1 }\left\lvert \int_{\outcore} b(u,v,u) \, \mathrm{d}x  \right\rvert 
    \\
    \lesssim \sup_{\| v \|_{H^1(\outcore)  }=1 }\| u \|_{ L^4(\outcore) } ^2\| v \|_{ H^1(\outcore) } 
    \lesssim \| u \|_{ H^1(\outcore) }^2.
\end{multline}
For the Lorentz term, 
\begin{equation}\label{eq:Lorentz-est}
\| \cc{B} B \|_{ (H^1(\outcore))' }=\sup_{\| v\|_{ H^1(\outcore) } =1}\left\lvert \int_{\outcore} b(B,v,B) \, \mathrm{d}x  \right\rvert   
\lesssim \| B \|_{ H^1(\outcore) }^2.
\end{equation}
We use control of $B$ and $u$ in $L^2_{t}H^1_{x}$ to derive 
\[\int_0^T \| 2\rho\nu \cc{A}^Su \|_{ (H^1(\outcore))' }+ \|  \cc{B} u \|_{ (H^1(\outcore))' }+ \norm{\cc B B}_{(H^1(\outcore)) '}\d t<\infty.\]
Putting these estimates together, we conclude
\[ 
\int_0^T  \| {f}^u \|_{(\mathbb{U}^1)'}  \, \mathrm{d}t <\infty.
\]
As before, this implies $(u,\omega)$ is continuous with values in $(\bb U^1)'$. We test \cref{eq:distribution-time-momentum} with some $\chi\in \cc D([0,T))$ such that $\chi = 1$ at $t=0$, and we compare with \cref{eq:weak-momentum} (which is satisfied by the weak solution $(u,\omega$)). We get that 
\[ \langle (u(0),\omega(0)),(v, \alpha)\rangle_{\bb U^0}= \langle (u_0,\omega_0),( v, \alpha)\rangle_{\bb U^0} \]
for every $(v,\alpha)\in (\bb B^1)'$, from which we can conclude that $(u(0),\omega(0)) = (u_0,\omega_0)$ in $(\bb U^1)'$. Since $(u_0,\omega_0)\in \bb U^0$, this is also equality in $\bb U^0$. 

A completely analogous technique works for $\tau$.
\end{proof}

\subsubsection{Limit energy}
Lastly, we verify that the solution of \Cref{lem:existence-weakeqs} satisfies the desired energy inequality.

\begin{lemma}\label{lem:energy-limit}
    Let $(B,(u,\omega),\tau)$ be as in \Cref{lem:existence-weakeqs}. Then they satisfy \cref{eq:energy-total}.
\end{lemma}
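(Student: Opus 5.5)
We pass to the limit in the exact Galerkin energy identity \cref{eq:galerkin-energy-exact}, integrated in time, and use weak lower semicontinuity on the left-hand side. The right-hand side has to converge genuinely, not merely be bounded. Concretely, for each $m$ and each $t\in[0,T]$ we integrate \cref{eq:galerkin-energy-exact} over $[0,t]$. This gives $F_m(t)+\int_0^t G_m\,\d s = F_m(0)+\int_0^t R_m\,\d s$, where $R_m$ collects the six forcing and coupling terms. Since $F_m(0)\le F(0)$ by the orthogonality of $P_m$, the initial term may be replaced by $F(0)$, which is the value of the energy of the limit at $t=0$ by \Cref{lem:IC-limit}.

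First, we treat the dissipation. Because $\tcurl\mu^{-1}B_m$, $D^S u_m$ and $\nabla\tau_m$ converge weakly in $L^2_tL^2_x$, the map $g\mapsto\int_0^t\|g\|^2$ is convex and continuous, and the weights $\sigma^{-1},\rho,\nu,\kappa$ are positive constants, we get $\int_0^t G\le\liminf_m\int_0^t G_m$. Applying the same argument on $[0,t]$ with a multiplier $\psi\ge0$ is convenient. Second, we treat the pointwise-in-time energy. Rather than work for every $t$ directly, we multiply by a nonnegative $\psi\in\cc C_0([0,T))$ and integrate in $t$, so that $\int\psi F_m\,\d t$ is handled by weak-$\ast$ lower semicontinuity in $L^\infty_tL^2_x$ and $L^\infty_t$. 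This yields the energy inequality for almost every $t$. The weak continuity of $(B,(u,\omega),\tau)$ in $\bb B^0\times\bb U^0\times\bb T^0$ follows from the continuity in the dual spaces proved in \Cref{lem:IC-limit} together with the $L^\infty_t\bb X^0$ bound, by a standard lemma (e.g.\ Temam, Ch.~3, Lemma~1.4). Weak continuity and lower semicontinuity of the norm then upgrade the inequality to every $t\in[0,T]$, and the right-hand side is continuous in $t$.

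Third, and this is the main point, we must show $\int_0^t R_m\to\int_0^t R$. The linear terms $\langle\theta^c g\radial,\rho u_m\rangle$, $\kappa a(\theta^c,\tau_m)$, $\kappa\langle f_b,\tau_m\rangle$ and $\langle f,\tau_m\rangle$ converge by weak convergence of $u_m$ in $L^2_tL^2$ and of $\tau_m$ in $L^2_t\bb T^1$. The functionals involved ($\nabla\theta^c\in L^2$, $f_b\in H^{-1/2}$ composed with the trace, $f\in L^2_t(\bb T^1)'$) are bounded on these spaces, and multiplying by the indicator of $[0,t]$ is harmless. The quadratic terms $\langle\tau_m g\radial,\rho u_m\rangle$ and $b(u_m,\theta^c,\tau_m)$ need one strong factor. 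By \Cref{lem:compactness}, $u_m\to u$ strongly in $L^2_tL^2$. We write $u_m=u+(u_m-u)$. The pieces containing $u$ converge by weak convergence of $\tau_m$, because $g\radial\rho u\in L^2_tL^2$ and $u\cdot\nabla\theta^c\in L^2_t(\bb T^1)'$, the latter by H\"older with $L^4$ and the Sobolev embedding since $u\in L^2_tH^1$. The error pieces are controlled by the Ladyzhenskaya/interpolation bound used in \eqref{eq:nonlin-ladyzhenskaya}, namely $\|u_m-u\|_{L^2_tL^2}^{1-d/4}\|u_m-u\|_{L^2_tH^1}^{d/4}\|\nabla\theta^c\|_{L^2}\|\tau_m\|_{L^2_tH^1}\to0$ (and more simply $\|u_m-u\|_{L^2_tL^2}\|\tau_m\|_{L^2_tL^2}\to0$).

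I expect the main obstacle to be the pointwise-in-time statement: passing from an inequality valid for almost every $t$ to one valid for every $t$. This requires the weak continuity of the limit in $\bb B^0\times\bb U^0\times\bb T^0$. The delicate point there is that $\bb U^0$ carries the coupled inner product with the $J|\omega|^2$ part, so the lemma has to be applied in the triple $\bb U^1\subset\bb U^0\subset(\bb U^1)'$. Everything else reduces to the uniform bounds and the strong compactness of \Cref{lem:compactness} that are already established.
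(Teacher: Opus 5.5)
Your proposal is correct and follows the paper's proof. Both integrate the Galerkin identity \eqref{eq:galerkin-energy-exact} in time, use weak lower semicontinuity for the energy and dissipation, and pass to the limit in the linear forcing terms by weak convergence. Both also handle $\langle\tau_m g\radial,\rho u_m\rangle$ and $b(u_m,\theta^c,\tau_m)$ by splitting $u_m=u+(u_m-u)$ and using the strong convergence of \Cref{lem:compactness}.

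The main difference is in the pointwise-in-time step. The paper asserts $\|B(t)\|\le\liminf_m\|B_m(t)\|$ (and the analogues) at each fixed $t$ directly ``due to the weak convergences.'' You instead test against $\psi\ge 0$ in time and then upgrade from a.e.\ $t$ to every $t$ via the weak continuity of \Cref{lem:weak-continuity}. Your route is the more careful one, since weak-$\ast$ convergence in $L^\infty_tL^2_x$ alone does not give $B_m(t)\rightharpoonup B(t)$ at a fixed $t$. Your use of $F_m(0)\le F(0)$ in place of strong convergence of the projected initial data is an equally valid shortcut.
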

\begin{proof}
    We first note that (due to the weak convergences)
    \begin{multline*} 
        \norm{B(t)}_{L^2(\R^3)}, 
        \norm{u(t)}_{L^2(\outcore)}, 
        \norm{\tau(t)}_{L^2(\outcore)}, |\omega(t)|
        \\
        \leq \liminf_m \,
        \norm{B_m(t)}_{L^2(\R^3)}, 
        \norm{u_m(t)}_{L^2(\outcore)}, 
        \norm{\tau_m(t)}_{L^2(\outcore)}, |\omega_m(t)| 
    \end{multline*}
    and 
    \begin{multline*} 
        \norm{\tcurl \mu^{-1} B}_{L^2_tL^2(\core)}, \norm{D^S(u)}_{L^2_tL^2(\outcore)}, \norm{\nabla \tau}_{L^2_tL^2(\outcore)}
        \\
        \leq \liminf_m\, 
        \norm{\tcurl \mu^{-1} B_m}_{L^2_tL^2(\core)},
        \norm{D^S(u_m)}_{L^2_tL^2(\outcore)}, 
        \norm{\nabla \tau_m}_{L^2_tL^2(\outcore)}. 
    \end{multline*}
    We now take the $\liminf$ in $m$ of \cref{eq:galerkin-energy-exact} after integrating in time.
    \begin{multline}\label{eq:energy-limit-initial}
        \frac{1}{2}\p*{
        \norm{\mu^{-1/2} B(t)}_{L^2(\R^3)}^2 + \rho \norm{u(t)}_{L^2(\outcore)}^2 + J |\omega(t)|^2 + \norm{\tau(t)}_{L^2(\outcore)}^2
        } 
        \\
        +\int_0^t\sigma^{-1} \norm{\tcurl \mu^{-1} B}_{L^2(\core)}^2 + 2\rho \nu\norm{\nabla u}_{L^2(\outcore)}^2+\kappa \norm{\nabla \tau}_{L^2(\outcore)}^2\d s
        \\
        \leq \liminf_m \bigg[
        \frac{1}{2}\p*{
        \norm{\mu^{-1/2} B_m(t)}_{L^2(\R^3)}^2 + \rho \norm{u_m(t)}_{L^2(\outcore)}^2 + J |\omega_m(t)|^2 + \norm{\tau_m(t)}_{L^2(\outcore)}^2
        } 
        \\
        +\int_0^t\sigma^{-1} \norm{\tcurl \mu^{-1} B_m}_{L^2(\core)}^2 + 2\rho \nu\norm{\nabla u_m}_{L^2(\outcore)}^2+\kappa \norm{\nabla \tau_m}_{L^2(\outcore)}^2\d s \bigg]
        \\
        =
        \liminf_m \bigg[
        \frac{1}{2}\p*{
        \norm{\mu^{-1/2} B_m(0)}_{L^2(\R^3)}^2 + \rho\norm{u_m(0)}_{L^2(\outcore)}^2 + J |\omega_m(0)|^2 + \norm{\tau_m(0)}_{L^2(\outcore)}^2
        }
        \\
        +\int_0^t \langle \theta^c g\radial,\rho u_m\rangle_\outcore +\langle \tau_m g\radial , \rho u_m\rangle_\outcore -b(u_m,\theta^c,\tau_m)-\kappa a(\theta^c,\tau_m)-\kappa \langle f_b,\tau_m\rangle_{\partial \incore}+\langle f, \tau_m\rangle_\outcore \d t\bigg].
    \end{multline}
    Recall that 
    \[B_m(0), (u,\omega)_m(0), \tau_m(0) \xrightarrow[L^2_x]{} B_0, (u,\omega)_0, \tau_0\]
    strongly because the initial conditions were defined by orthogonal projection in $L^2$. Thus, the $\liminf$ can be replaced by the limit for those terms. We can use weak convergence to replace many of the remaining terms with their limits; in particular, 
    \begin{align*}
        \lim_{m\to\infty} a(\theta^c,\tau_m)& = a(\theta^c,\tau),
        &\quad 
        \lim_{m\to\infty}\langle \theta^cg\widehat{x},\rho u_m\rangle_\outcore &=\langle \theta^c g\widehat{x},\rho u\rangle_\outcore,
        \\
        \lim_{m\to\infty}\langle f, \tau_m\rangle_{(\bb T^1)', \bb T^1} &= \langle f,\tau\rangle_{(\bb T^1)', \bb T^1},
        &\quad 
        \lim_{m\to\infty}\langle f_b,\tau_m\rangle_{H^{-1/2}(\partial\incore),H^{1/2}(\partial \incore)} &= \langle f_b,\tau\rangle_{H^{-1/2}(\partial\incore),H^{1/2}(\partial\incore)},
    \end{align*}
    by the weak convergence of $\tau_m$ and $u_m$ in $L^2_tH^1(\outcore)$.
    This leaves us with two terms to analyze: $\langle \tau_mg\radial ,\rho u_m\rangle_\outcore$ and $b(u_m,\theta^c,\tau_m)$. Here, we will need to leverage strong convergence of $u_m$ in $L^2_tL^2(\outcore)$. We write
    \[
    \int_0^t \langle \tau_m g\widehat{x}, \rho u_m\rangle_\outcore \d t =\int_0^t \langle \tau_m g\widehat{x}, \rho u\rangle_\outcore \d +\int_0^t \langle\tau_m g\widehat{x}, \rho u_m - \rho u \rangle_\outcore \d t
    \]
    and observe that the first term converges to $\int_0^t \langle \tau g\widehat{x},\rho  u\rangle\d t $ due to weak convergence of $\tau_m$ in $L^2_tH^1(\outcore)$. The second term converges to zero because of strong convergence of $u_m$ in $L^2_tL^2(\outcore)$ (and uniform boundedness of $\tau_m$ in $L^2_tH^1(\outcore)$). Similarly,
    \[
    \int_0^t b(u_m, \theta^c,\tau_m) \d t =\int_0^t b(u, \theta^c,\tau_m)\d t+\int_0^t  b(u_m-u, \theta^c,\tau_m) \d t.
    \]
    Again, the first term converges to $\int_0^t b(u, \theta^c, \tau)\d t$ due to weak convergence of $\tau_m$ in $L^2_tH^1(\outcore)$. The second term converges to zero using Ladyzhenskaya's inequality for $u$, strong convergence in $L^2_tL^2(\outcore)$, and boundedness of $\tau_m$ in $L^2_tH^1(\outcore)$ (c.f. \cref{eq:nonlin-ladyzhenskaya}).
    
    We can now pass to the limit in all the terms of \cref{eq:energy-limit-initial} to deduce 
    \begin{multline*}
        \frac{1}{2}\p*{
        \norm{\mu^{-1/2}B(t)}_{L^2(\R^3)}^2 + \rho\norm{u(t)}_{L^2(\outcore)}^2 + J|\omega(t)|^2 + \norm{\tau(t)}_{L^2(\outcore)}^2
        } 
        \\
        +\int_0^t\sigma^{-1} \norm{\tcurl \mu^{-1} B}_{L^2(\core)}^2 + 2\rho \nu\norm{\nabla u}_{L^2(\outcore)}^2+\kappa \norm{\nabla \tau}_{L^2(\outcore)}^2\d s
        \\
        \leq 
        \lim_m
        \frac{1}{2}\p*{
        \norm{\mu^{-1/2}B_m(0)}_{L^2(\R^3)}^2 + \rho \norm{u_m(0)}_{L^2(\outcore)}^2 + J |\omega_m(0)|^2 + \norm{\tau_m(0)}_{L^2(\outcore)}^2
        }
        \\
        +\int_0^t \langle \theta^c g\radial,\rho u_m\rangle_\outcore +\langle \tau_m g\radial , \rho u_m\rangle_\outcore  - b(u_m,\theta^c,\tau_m) - \kappa a(\theta^c,\tau_m)
        \\
        -\kappa \langle f_b,\tau_m\rangle_{H^{-1/2}(\partial\incore),H^{1/2}(\partial \incore)}+\langle f, \tau_m\rangle_{(\bb T^1)',\bb T} \d t
        \\
        =
        \frac{1}{2}\p*{
        \norm{\mu^{-1}B(0)}_{L^2(\R^3)}^2 + \rho\norm{u(0)}_{L^2(\outcore)}^2 + J |\omega(0)|^2 + \norm{\tau(0)}_{L^2(\outcore)}^2
        }
        \\
        +\int_0^t \langle \theta^c g\radial,\rho u\rangle_\outcore +\langle \tau g\radial ,\rho  u\rangle_\outcore  -b(u,\theta^c,\tau)-\kappa a(\theta^c,\tau)
        \\
        -\kappa \langle f_b,\tau\rangle_{H^{-1/2}(\partial \incore),H^{1/2}(\partial\incore)}+\langle f, \tau\rangle_{(\bb T^1)',\bb T} \d t
    \end{multline*}
    which is exactly \cref{eq:energy-total}.
\end{proof}

\subsubsection{Weak continuity}
\begin{lemma}\label{lem:weak-continuity}
    Let $(B,(u,\omega),\tau)$ be as in \Cref{lem:existence-weakeqs}. Then $(B,(u,\omega),\tau)$ is weakly continuous with values in $\bb{B}^0\times \bb{U}^0\times \bb{T}^0$.
\end{lemma}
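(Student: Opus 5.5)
The plan is to use the standard lemma (Temam, Chapter 3, Lemma 1.4): if $X\subset Y$ are Banach spaces with continuous injection, $X$ is reflexive, and $\phi\in L^\infty(0,T;X)$ is weakly continuous with values in $Y$, then $\phi$ is weakly continuous with values in $X$. We apply it separately to each component. The relevant pairs are $X=\bb B^0$, $Y=(\bb B^1)'$; $X=\bb U^0$, $Y=(\bb U^1)'$; and $X=\bb T^0$, $Y=(\bb T^1)'$. In each case $X$ is a Hilbert space and hence reflexive. The embedding $X\hookrightarrow Y$ is the usual Gelfand-triple identification via the $\bb B^0$ (resp.\ $\bb U^0$, $\bb T^0$) inner product. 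It is injective because $\bb B^1$ is dense in $\bb B^0$ by definition (and likewise $\bb U^1$ in $\bb U^0$, while $\bb T^1$ is dense in $L^2(\outcore)$).

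First, membership in $L^\infty_tX$ is already part of \Cref{lem:existence-weakeqs}, via the weak-$\ast$ limits and the energy bound. Second, we need continuity with values in $Y$. This was essentially shown in the proof of \Cref{lem:IC-limit}. There we rewrote \cref{eq:weak-magnetic} as $\frac{\d}{\d t}B = f^B$ in the distributional sense with $f^B\in L^1_t(\bb B^1)'$, using \cref{eq:frozen-in-flux-est,eq:magnetic-diffusion-est} together with $B,u\in L^2_tH^1$ and \Cref{lem:curl-norm}. Similarly we rewrote \cref{eq:weak-momentum} as $\frac{\d}{\d t}(u,\omega)=f^u\in L^1_t(\bb U^1)'$ via \cref{eq:momentum-RHS-est,eq:viscosity-est,eq:convection-est,eq:Lorentz-est}. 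Temam's Lemma 1.1 then gives $B\in \cc C([0,T];(\bb B^1)')$ and $(u,\omega)\in\cc C([0,T];(\bb U^1)')$, after modification on a null set. For $\tau$ we do the same thing. We write \cref{eq:weak-temperature} as $\frac{\d}{\d t}\tau=f^\tau$ and check $f^\tau\in L^1_t(\bb T^1)'$. The term $b(u,\varphi,\tau)$ is bounded by $\norm{u}_{L^4}\norm{\tau}_{L^4}\norm{\nabla\varphi}_{L^2}\lesssim \norm{u}_{H^1}\norm{\tau}_{H^1}\norm{\varphi}_{\bb T^1}$, which lies in $L^1_t$. The terms $a(\tau,\cdot)$ and $a(\theta^c,\cdot)$ lie in $L^2_t$. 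The term $b(u,\theta^c,\cdot)$ is bounded by $\norm{u}_{H^1}\norm{\nabla\theta^c}_{L^2}$. The $f_b$ term is handled by the trace inequality, and $f\in L^2_t(\bb T^1)'$. Strong continuity in $Y$ implies weak continuity in $Y$, so the abstract lemma applies.

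The one step needing care is the abstract lemma itself, which we can verify directly if needed. Take $t_n\to t$. By the $L^\infty_tX$ bound (after modifying on a null set, so that $\sup_t\norm{\phi(t)}_X<\infty$ via lower semicontinuity and density), the sequence $\phi(t_n)$ is bounded in $X$. Every subsequence therefore has a further subsequence converging weakly in $X$. The limit must equal $\phi(t)$, since weak convergence in $X$ implies weak convergence in $Y$, where we already know $\phi(t_n)\to\phi(t)$. Hence the whole sequence converges weakly in $X$.

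The main subtlety is that the representative that is continuous in $Y$ must be the one we use. The uniform-in-time $X$-bound must also hold at every $t$, not merely almost every $t$. We get this from density of $Y'$-type test functions, namely $\bb B^1$ in $\bb B^0$: $\norm{\phi(t)}_X=\sup\{\langle\phi(t),A\rangle : A\in\bb B^1,\ \norm{A}_{\bb B^0}\le1\}$, and each pairing is continuous in $t$ and bounded by the essential supremum. Combined with \Cref{lem:IC-limit}, this also identifies the weakly continuous representative at $t=0$ with the initial data.
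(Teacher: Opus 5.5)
Your proposal is correct and follows the same route as the paper. The paper combines continuity of $(B,(u,\omega),\tau)$ with values in $(\bb B^1)'\times(\bb U^1)'\times(\bb T^1)'$ (from the proof of the initial-condition lemma) with the $L^\infty_t$ bound in $\bb B^0\times\bb U^0\times\bb T^0$, and then invokes the standard abstract lemma (it cites Boyer--Fabrie, Lemma II.5.9, which is equivalent to Temam's Lemma III.1.4); your explicit check that $f^\tau\in L^1_t(\bb T^1)'$ and your pointwise-in-time bound via density of $\bb B^1$ in $\bb B^0$ fill in details the paper leaves as ``analogous'' or ``standard.''
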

\begin{proof}
    In the proof of \Cref{lem:IC-limit}, it was observed that $(B,(u,\omega),\tau)\in \cc{C}^0([0,T];(\bb{B}^1)'\times (\bb{U}^1)'\times (\bb{T}^1)')$. We also have that $(B,(u,\omega),\tau)\in L^\infty([0,T];\bb{B}^0\times \bb{U}^0\times \bb{T}^0)$. It is standard (for example, Lemma II.5.9 in~\cite{boyer_mathematical_2013}) to conclude that $(B,(u,\omega),\tau)\in \mathcal{C}^0_{\mathrm{weak}}([0,T];\bb{B}^0\times \bb{U}^0\times \bb{T}^0)$
\end{proof}

\subsubsection{Complete proof}
Finally, we can prove \Cref{thm:existence}.
\begin{proof}[Proof of \Cref{thm:existence}]
    Apply \Cref{lem:existence-weakeqs,lem:IC-limit,lem:energy-limit,lem:weak-continuity}.
\end{proof}

\appendix
\section{Technical results}\label{sec:appendix}
\subsection{Compactness}\label{sec:appendix-compactness}
In this section, we will think of the function $f$ as a map from time into a Hilbert space, rather than a map from space and time.

Let $X$ and $Y$ be Hilbert spaces. Define 
\[ 
H^\gamma(\mathbb{R};X,Y)=\{ f\in L^2(\mathbb{R};X): D_{t}^\gamma f\in L^2(\mathbb{R};Y) \},
\]
and 
\[ 
H^\gamma_{T}(\mathbb{R};X,Y)=\{ f\in H^\gamma(\mathbb{R};X,Y):\operatorname{supp}f\subseteq [0,T] \}.
\]
Here, $D_t^\gamma$
is the Fourier multiplier corresponding to multiplication by $|\hat t|^\gamma$ on the Fourier side.
\begin{theorem}
    \label{thm:fractional-ALS}
    Assume $X_0\subset X \subset X_1$ are three Hilbert spaces and the first embedding is compact. Then for finite $T$ and any $\gamma>0$ the embedding of $H_{T}^\gamma(\mathbb{R};X_0,X_1)$ into $L^2(\mathbb{R},X)$ is compact.
\end{theorem}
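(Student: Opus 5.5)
The plan follows the classical Lions–Temam argument for the fractional Aubin–Lions lemma (Temam, Chapter III, Theorem 2.2). The tool is an Ehrling-type interpolation inequality: since $X_0\hookrightarrow X$ compactly and $X\hookrightarrow X_1$ continuously, for every $\eta>0$ there is $C_\eta$ with
\[
\norm{x}_X\le \eta\norm{x}_{X_0}+C_\eta\norm{x}_{X_1}\qquad\text{for all } x\in X_0 .
\]
This is proved by contradiction in the usual way. Given this inequality, it suffices to take a bounded sequence $f_n$ in $H^\gamma_T(\R;X_0,X_1)$ with $f_n\rightharpoonup 0$ weakly and show that $f_n\to 0$ strongly in $L^2(\R;X_1)$. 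Indeed, integrating the inequality squared in time gives
\[
\norm{f_n}_{L^2X}\le \eta \sup_n\norm{f_n}_{L^2X_0}+C_\eta\norm{f_n}_{L^2X_1},
\]
so strong convergence in $L^2X_1$ upgrades to $L^2X$ after letting $\eta\to0$. Extraction of a weakly convergent subsequence is available because the space is a Hilbert space.

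To get strong convergence in $L^2(\R;X_1)$, we pass to the Fourier transform in time and use Plancherel:
\[
\norm{f_n}_{L^2X_1}^2=\int_{\R}\norm{\hat f_n(\xi)}_{X_1}^2\d\xi .
\]
We split this integral into $|\xi|\le M$ and $|\xi|>M$. On the high-frequency part,
\[
\int_{|\xi|>M}\norm{\hat f_n}_{X_1}^2\d\xi\le M^{-2\gamma}\int|\xi|^{2\gamma}\norm{\hat f_n}_{X_1}^2\d\xi\le M^{-2\gamma}\sup_n\norm{D_t^\gamma f_n}_{L^2X_1}^2,
\]
which is small uniformly in $n$ once $M$ is large.

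On the low-frequency part we use the support condition. Because $\supp f_n\subseteq[0,T]$, we have $\hat f_n(\xi)=\int_0^T f_n(t)e^{-2\pi i t\xi}\d t$. This gives a uniform bound
\[
\norm{\hat f_n(\xi)}_{X_0}\le \sqrt T\norm{f_n}_{L^2X_0}\le C,
\]
and for each fixed $\xi$ it shows $\hat f_n(\xi)\rightharpoonup0$ weakly in $X_0$, since testing against $e^{-2\pi i t\xi}\mathbf 1_{[0,T]}x^*$ is a bounded functional on $L^2X_0$. By compactness of $X_0\hookrightarrow X\hookrightarrow X_1$, $\hat f_n(\xi)\to0$ strongly in $X_1$ for each $\xi$. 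It is also dominated by a constant on the bounded set $|\xi|\le M$. Dominated convergence then gives $\int_{|\xi|\le M}\norm{\hat f_n}_{X_1}^2\to0$. Combining the two pieces proves the claim.

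The main obstacle is the low-frequency step. The compact support in $[0,T]$ is exactly what converts the $L^2$ bound into a pointwise-in-$\xi$ bound and pointwise weak convergence of $\hat f_n(\xi)$. One must check that the vector-valued Fourier transform and Plancherel hold in Hilbert spaces, which is standard, and that $f_n\rightharpoonup 0$ weakly in $L^2(\R;X_0)$ follows from weak convergence in $H^\gamma_T$. If the limit is nonzero, we apply the argument to $f_n-f$, noting that $H^\gamma_T$ is a closed subspace, so the weak limit stays in it.
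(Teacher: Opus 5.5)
Your proposal is correct and is essentially the classical Lions--Temam argument of Temam's Theorem III.2.2, which the paper only cites and does not prove itself. In that argument, Ehrling's inequality reduces the claim to strong convergence in $L^2(\R;X_1)$, and Plancherel with a frequency split finishes it: the $|\xi|^{2\gamma}$ weight controls high frequencies, while the support in $[0,T]$ gives uniform bounds and pointwise weak convergence of $\hat f_n(\xi)$ in $X_0$, so compactness plus dominated convergence handle the low frequencies.
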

This is Theorem 2.2 in~\cite[Chapter 3]{temam_navierstokes_2001}.

The following lemma is a useful condition to check that a function $f\in H^\gamma_T(\bb{R};X, Y)$. The proof can be found in~\cite[pages 193-194]{temam_navierstokes_2001}.
\begin{lemma}
    \label{lem:fractional-deriv}
    Suppose $|\hat t|\norm{\widehat{f}(\hat t)}_{X}^2\leq C \norm{\widehat{f}(\hat t)}_{Y}$ where $\widehat{f}$ is the Fourier transform in time of $f$ and $f\in L^2_tY$. Then, for any $\gamma<1/4$,
    \[
    \int_\R |\hat t|^{2\gamma}\norm{\widehat{f}(\hat t)}^2_{X}\d \hat t 
    \lesssim
    \p*{\int_\R \frac{1}{(1+|\hat t|^{1-2\gamma})^2}\d \hat t}^{1/2}\p*{\int_\R \norm{f(t)}_{Y}^2\d t}^{1/2}
    +
    \int_{\bb R} \norm{\widehat{f}(\hat t)}_Y^2 \d \hat t
    \leq 
    C.
    \]
\end{lemma}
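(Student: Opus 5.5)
The plan is to split the frequency integral into low and high frequencies and use the hypothesis only at high frequencies. The exponent $2\gamma$ is too small to help by itself there, so we will trade it against the full power $|\hat t|$ supplied by the hypothesis. Cauchy--Schwarz and Plancherel then reduce everything to $\norm{f}_{L^2_tY}$.

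First, we record the elementary pointwise inequality, valid for $0<\gamma<1/4$ and all $\hat t\in\R$,
\[
|\hat t|^{2\gamma} \lesssim \frac{1+|\hat t|}{1+|\hat t|^{1-2\gamma}} .
\]
For $|\hat t|\le 1$ both sides are comparable to constants, with the right side at least $1/2$. For $|\hat t|\ge 1$ the right side is comparable to $|\hat t|^{2\gamma}$. Multiplying by $\norm{\widehat f(\hat t)}_X^2$ gives
\[
|\hat t|^{2\gamma}\norm{\widehat f(\hat t)}_X^2 \lesssim \norm{\widehat f(\hat t)}_X^2 \bbm{1}_{\{|\hat t|\le 1\}} + \frac{|\hat t|\,\norm{\widehat f(\hat t)}_X^2}{1+|\hat t|^{1-2\gamma}}\bbm{1}_{\{|\hat t|\ge 1\}} .
\]

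In the high-frequency part we insert the hypothesis $|\hat t|\norm{\widehat f(\hat t)}_X^2\le C\norm{\widehat f(\hat t)}_Y$. We then apply Cauchy--Schwarz in $\hat t$:
\[
\int_\R \frac{\norm{\widehat f(\hat t)}_Y}{1+|\hat t|^{1-2\gamma}}\d\hat t \le \p*{\int_\R \frac{\d\hat t}{(1+|\hat t|^{1-2\gamma})^2}}^{1/2}\p*{\int_\R \norm{\widehat f(\hat t)}_Y^2\d\hat t}^{1/2}.
\]
The last factor equals $\norm{f}_{L^2(\R;Y)}$ by Plancherel for Hilbert-valued functions. The first factor is finite exactly when $2(1-2\gamma)>1$, i.e.\ $\gamma<1/4$. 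This is where the restriction on $\gamma$ enters, and it yields the first term of the claimed bound.

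For the low-frequency part $|\hat t|\le 1$ we need to bound $\norm{\widehat f}_X$ in $L^2$. Here I would use that in the application $f$ lies in $L^2_tX$ with $\norm{\cdot}_Y\lesssim$ (or comparable to) the relevant norm, so that $\int_{|\hat t|\le1}\norm{\widehat f}_X^2 \lesssim \int_\R\norm{\widehat f}_Y^2 = \norm{f}_{L^2_tY}^2$. Alternatively, one can bound it directly by $\norm{f}^2_{L^2_tX}$ via Plancherel, which gives the second term. The finiteness of the final constant then follows from the uniform $L^2_tY$ bound.

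The main obstacle is precisely this low-frequency term. The hypothesis gives nothing there, since it degenerates at $\hat t=0$. So one must make explicit which continuous embedding or a priori $L^2_t$ bound between $X$ and $Y$ is being used; I would state it as an additional standing assumption, matching how the lemma is applied to $u_m$ and $B_m$ via their uniform $L^2_t\bb U^1$ and $L^2_t\bb B^1$ bounds.
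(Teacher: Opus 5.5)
Your proof is correct and is essentially the argument the paper relies on (it defers to Temam, pp.~193--194): the pointwise bound $|\hat t|^{2\gamma}\lesssim (1+|\hat t|)/(1+|\hat t|^{1-2\gamma})$, the hypothesis applied to the $|\hat t|$-part, then Cauchy--Schwarz and Plancherel, with $\int_\R (1+|\hat t|^{1-2\gamma})^{-2}\d\hat t<\infty$ exactly when $\gamma<1/4$. Your low/high-frequency split is only a cosmetic variant of applying the hypothesis on the whole line. You are also right that the remaining term $\int_\R\norm{\widehat f}_Y^2\d\hat t$ tacitly uses the continuous embedding $\norm{\cdot}_X\lesssim\norm{\cdot}_Y$ (Temam's $V\subset H$); the statement omits this assumption, but it holds in the applications, where $X=L^2(\core)$ and $Y$ is an $H^1$-type space.
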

With this background, we can now prove \Cref{lem:compactness}.
We restate it for convenience: 
\getkeytheorem{lem:compactness}
\begin{proof}[Proof of \Cref{lem:compactness}]
    We will closely follow Chapter 3, Section 3.2 in \cite{temam_navierstokes_2001}.
    We already know that $\{ (u,\omega)_{m} \}$ is uniformly bounded in $L^2_{t}\mathbb{U}^1$. By considering the extension $u$ to $\incore$ (according to \cref{eq:momentum-inner-core}) and using 
    \[\norm{u}_{H^1_{0}(\core)} \lesssim \norm{(u,\omega)}_{\bb U ^1}\]
    (which is an intermediate result in \Cref{lem:poincare-korn})
    we conclude that the extended $u$ is uniformly bounded in $L^2_tH^1_0(\core)$.
    
    Next we will show that for the extended $u$, $D_t^\gamma u\in L^2_tL^2(\outcore)$.
    We begin by extending $(u,\omega)_m$ to be zero outside of $[0,T]$ (note this extension is not necessarily continuous). We then test these extended functions against $\rho (v, \alpha)_j$ using \cref{eq:momentum-and-angular-galerkin}.
    \[ 
    \frac{d}{dt} \langle (u,\omega)_m,(v,\alpha)_j\rangle _{\bb U^0}=\left\langle  \widetilde{f}^u_{m} , ( v,\alpha)_{j} \right\rangle _{(\bb U^1)',\mathbb{U}^1}+\left\langle ({u},{\omega})_{m}(0) , (v,\alpha)_{j} \right\rangle_{\bb U^0}\delta_{0}-\left\langle ({u},{\omega})_{m}(T) , (v,\alpha)_j \right\rangle_{\bb U^0} \delta_{T} .
    \]
    where $\left\langle  \widetilde{f}^u_{m} , ( v,\alpha)_{j} \right\rangle _{(\bb U^1)',\mathbb{U}^1}$ is defined by 
    \[ 
    \begin{cases}
    \begin{aligned}
        \langle \rho &\theta_m g\widehat{x}-\rho Le_{3}\times u_m, v_j\rangle_{L^2(\outcore)} 
        - JL(e_3\times \omega_{m})\cdot\alpha_j
        \\
        &-\langle 2\rho \nu \mathcal{A}^Su_{m} -\rho \mathcal{B} u_{m}
        + \mu_o^{-1}\mathcal{B} B_{m}, v_j\rangle_{(H^1(\outcore))',H^1(\outcore)} 
    \end{aligned} & t\in[0,T] \\
    0 & \mathrm{otherwise}.
    \end{cases}
    \]
    Now we take the Fourier transform in time.
    \begin{multline*}
        2\pi i\hat t \left\langle \widehat{(u,\omega)}_{m} , (v,\alpha)_j \right\rangle_{\bb U ^0} =\left\langle \widehat{f}^u_{m} , (v,\alpha)_j \right\rangle _{(\mathbb{U}^1)',\mathbb{U}^1}
        \\
        +\left\langle ({u},{\omega})_{m}(0) , \rho(v,\alpha)_j \right\rangle_{\bb U^0}-\left\langle ({u},{\omega})_{m}(T) , \rho(v,\alpha)_j \right\rangle_{\bb U^0} e^{ 2\pi iT\hat t }.
    \end{multline*}
    We can now effectively test against $\widehat{(u,\omega)}_{m}$ by summing the above for $j=1,\dots,n$ after multiplying each by $\widehat{g}_{j,m}^u$.
    \begin{multline*}
        2\pi i\hat t \left\langle \widehat{(u,\omega)}_{m} , \widehat{(u,\omega)}_{m} \right\rangle_{\bb U^0} =\left\langle \widehat{f}^u_{m} , \widehat{(u,\omega)}_{m} \right\rangle _{(\mathbb{U}^1)',\mathbb{U}^1}
        \\
        +\left\langle ({u},{\omega})_{m}(0) ,  \widehat{(u,\omega)}_{m} \right\rangle_{\bb U^0}-\left\langle ({u},{\omega})_{m}(T) , \widehat{(u,\omega)}_{m} \right\rangle_{\bb U^0} e^{ 2\pi iT\hat t }.
    \end{multline*}
    It follows from estimates \cref{eq:momentum-RHS-est,eq:viscosity-est,eq:convection-est,eq:Lorentz-est}, control of $u_{m},B_{m}$ in $L^2_{t}H^1(\core)$ uniformly in $m$, and control of $\norm{\theta_m}_{L^2(\outcore)},\norm{u_m}_{L^2(\outcore)},|\omega_m|$ in $L^\infty_t$ uniformly in $m$, that
    \[ 
    \int_\R  \| \widetilde{f}^u_{m} \|_{(\mathbb{U}^1)'}  \, \mathrm{d}t <\infty 
    \]
    also uniformly in $m$. This implies that
    \[ 
    \sup_{\hat t \in \mathbb{R}} \| \widehat{f}^u_{m}(\hat t) \|_{(\mathbb{U}^1)'} <\infty
    \]
    uniformly in $m$. We also know (using control of $u$ in $L^\infty_{t}L^2_{x}$ and $\omega$ in $L^\infty_{t}\mathbb{R}^3$) that for almost every $T$
    \[ 
    \| (u,\omega )_{m}(0) \|_{ \bb U ^0} ,\ \| (u,\omega )_{m}(T) \|_{ \bb U ^0 }<\infty
    \]
    also uniformly in $m$. Thus
    \begin{multline*}
    \left\lvert \hat t  \right\rvert   \| \widehat{(u,\omega)}_{m}(\hat t) \|_{ \bb U ^0} ^2
    \\
    \leq \left( \| \widehat{f}^u_{m}(\hat t) \|_{ \mathbb{U}' }+\| (u,\omega )_{m}(0) \|_{ \bb U ^0 } +\| (u,\omega )_{m}(T) \|_{ \bb U ^0 } \right)  \| \widehat{ (u,\omega)}_{m}(\hat t) \|_{  \mathbb{U}^1} 
    \\
    \lesssim \| \widehat{(u,\omega)}_{m}(\hat t) \|_{  \mathbb{U}^1}.
    \end{multline*}
    We observe (see \cref{eq:u-L2-norm-equiv}),
    \[
    \norm{\widehat{u}_m(\hat t)}_{L^2(\core)}\lesssim \norm{\widehat{(u,\omega)}_m(\hat t)}_{\bb U^0}
    \]
    (where $u$ on the left hand side is extended to $\incore$ according to \cref{eq:momentum-inner-core}),
    so we can apply \Cref{lem:fractional-deriv} to conclude that the sequence $\{{u}_{m}\}$ is bounded in $H_T^\gamma(\mathbb{R},H^1_0(\core), L^2(\core))$
    for any fixed $\gamma < 1/4$. Apply \Cref{thm:fractional-ALS} (take $X_1 = X= L^2(\core), X_0 = H^1(\core)$)
    to conclude that there is a subsequence (we do not change the index for convenience) that converges in the strong topology:
    \[ 
    u_m \xrightarrow[L^2_{t}L^2(\core)]{} u.
    \]
    The analysis for $B$ is similar. We only need to show strong convergence in $L^2_tL^2(\core)$ (rather than $L^2_tL^2(\R^3)$)
    because all relevant integrals in the weak formulation are over $\core$ or $\outcore$.
    Firstly, we recall
    \[\norm{B}_{H^1(\incore)\times H^1(\outcore)}\lesssim\norm{B}_{\bb B^1}\]
    so that $B_m$ are uniformly bounded in $L^2_t(H^1(\incore)\times H^1(\outcore))$.
    Now, following the same analysis as for the momentum, we arrive at the following inequality
    \begin{multline*}
    \left\lvert \hat t \right\rvert \| \mu^{-1/2}\widehat{B}_{m}(\hat t) \|_{ L^2(\mathbb{R}^3)  } ^2
    \\
    \leq \left( \| \widehat{f}^B_{m}(\hat t) \|
    _{(\bb B^1)'}
    +\|\mu^{-1/2} B_{m}(0) \|_{ L^2(\bb R^3)} +\| \mu^{-1/2}B_{m}(T) \|_{ L^2(\R^3)} \right)  \| \mu^{-1/2}\widehat{B}_{m}(\hat t) \|
    _{\bb B^1}
    \\
    \lesssim \|\mu^{-1/2} \widehat{B}_{m}(\hat t) \|
    _{\bb B^1}.
    \end{multline*}
    Here, $\widetilde{f}^B_m$ is analogous to but different from $\widetilde{f}^u_m$ from the momentum, i.e., 
    \[
    \left\langle \widetilde{f}^B_m, A_j\right\rangle _{(\bb B^1)',\bb B^1} = 
    \begin{cases}
        \langle \cc{N}(u_m,B_m)-\cc{D}B_m, A_j\rangle_{(\bb B^1)',\bb B^1}  & t\in[0,T]
        \\
        0 & \text{otherwise}\,.
    \end{cases}
    \]
    As before, we must demonstrate that 
    \begin{equation}\label{eq:fB-finite-integral}
        \int_\R \norm{\widetilde{f}^B_m}_{(\bb B^1)'} \d t<\infty .
    \end{equation}
    It follows from estimates \cref{eq:frozen-in-flux-est,eq:magnetic-diffusion-est} and control of $u_{m},B_{m}$ in $L^2_{t}H^1(\core)$ uniformly in $m$ that
    \[
    \int_0^T \norm{\cc N(B_m,u_m)}_{(\bb B^1)'}+ \norm{\cc D B_m}_{(\bb B^1)'}\d t <\infty,
    \]
    uniformly in $m$, and therefore
    \[ 
    \sup_{\hat t \in \mathbb{R}} \| \widehat{f}^B_{m}(\hat t) \|_{(\mathbb{U}^1)'} <\infty.
    \]
    As before, we deduce that (for almost every $T$)
    \[
    \left\lvert \hat t \right\rvert \| \widehat{B}_{m}(\hat t) \|_{ L^2(\core)  } ^2
    \leq
    \left\lvert \hat t \right\rvert \| \widehat{B}_{m}(\hat t) \|_{ L^2(\mathbb{R}^3)  } ^2
    \lesssim 
    \| \widehat{B}_{m}(\hat t) \|_{ \bb B^1}.
    \]
    Therefore $B_m$ is bounded in $H^\gamma_T(\R,H^1_{\tdiv}(\incore)\times H^1_{\tdiv}(\outcore),L^2(\core))$ for fixed $\gamma < 1/4$ and we use \Cref{thm:fractional-ALS} to conclude (up to a subsequence) the strong convergence
    \[B_m\xrightarrow[L^2_tL^2(\core)]{} B.\]
\end{proof}

\subsection{Momentum function space}
We collect two explicit computations.
Let $(u,\omega)\in \bb U^1$ and consider $u$ extended to $\incore$ according to \cref{eq:momentum-inner-core}. Then, on $\incore$,
\begin{equation}\label{eq:grad-u-incore}
    \nabla u = \begin{bmatrix}
    0 & -\omega_{3} & \omega_{2} \\
    \omega_{3} & 0 & -\omega_{1} \\
    -\omega_{2} & \omega_{1} & 0 \\
    \end{bmatrix}.
\end{equation}
Introducing $\rho_i$ as the density of the inner core, we also have
\begin{equation}\label{eq:u-L2-norm-equiv}
    \norm{(\rho^{1/2}\bbm 1_\outcore + \rho_i^{1/2}\bbm 1_\incore) u}_{L^2(\core)}^2 = \int_\outcore \rho u^2\d x+\int_\incore \rho_i (\omega\times x)^2\d x =\norm{(u,\omega)}_{\bb U^0}^2
\end{equation}
where, recalling that $J$ is the moment of inertia of the inner core, the second integral is equivalent to $J|\omega|^2$.

\bibliographystyle{amsplain}
\bibliography{references}

\end{document}